\newcommand{\es}{\varnothing}
\DeclareMathOperator{\PP}{PP}
\DeclareMathOperator{\FLS}{FLS}
\DeclareMathOperator{\NP}{N-P}
\DeclareMathOperator{\SP}{SP}
\DeclareMathOperator{\EGP}{EGP}
\title{{\sc Set Representations of Linegraphs}}
\author{
Jun-Lin Guo\inst{1}
\and
 Tao-Ming Wang\inst{2}
\and
Yue-Li~Wang\inst{1}\thanks{All correspondence should be addressed to
Professor Yue--Li Wang, Department of Information Management,
National Taiwan University of Science and Technology, Taipei,
Taiwan (Email: {\tt ylwang@cs.ntust.edu.tw}).}
\and
 Ton~Kloks\inst{3}
}
\institute{
 Department of Information Management\\
 National Taiwan University of Science and Technology\\
 {\tt ylwang@cs.ntust.edu.tw}
\and
Department of Mathematics\\
Tunghai University, Taichung, Taiwan\\
\and
Department of Computer Science\\
 National Tsing Hua University, Taiwan\\
}
\begin{document}

\maketitle

\begin{abstract}
Let $G$ be a graph with vertex set $V(G)$ and edge set $E(G)$. A
family $\mathcal{S}$ of nonempty sets $\{S_1,\ldots,S_n\}$ is a set
representation of $G$ if there exists a one-to-one correspondence
between the vertices $v_1, \ldots, v_n$ in $V(G)$ and the sets in
$\mathcal{S}$ such that $v_iv_j \in E(G)$ if and only if $S_i\cap
S_j\neq \es$. A set representation $\mathcal{S}$ is a distinct
(respectively, antichain, uniform and simple) set representation if
any two sets $S_i$ and $S_j$ in $\mathcal{S}$ have the property
$S_i\neq S_j$ (respectively, $S_i\nsubseteq S_j$, $|S_i|=|S_j|$ and
$|S_i\cap S_j|\leqslant 1$). Let $U(\mathcal{S})=\bigcup_{i=1}^n
S_i$. Two set representations $\mathcal{S}$ and $\mathcal{S}'$ are
isomorphic if $\mathcal{S}'$ can be obtained from $\mathcal{S}$ by a
bijection from $U(\mathcal{S})$ to $U(\mathcal{S}')$. Let
$F$ denote a class of set representations of a graph $G$. The
type of $F$ is the number of equivalence classes
under the isomorphism relation. In this paper, we investigate
types of set representations for
linegraphs. We determine the types for the following categories of set
representations: simple-distinct, simple-antichain, simple-uniform
and simple-distinct-uniform. \vskip 0.4in

\vskip 0.1in \noindent{\bf Keywords:} Set representation; Uniquely
intersectable; Clique partition; Line graph.

\end{abstract}

\section{Introduction}

Let $G$ be a simple graph with vertex set $V(G)$ and edge set
$E(G)$. The degree of $v$ is denoted by $d_G(v)$. For brevity,
$V(G)$, $E(G)$ and $d_G(v)$ are simply written as $V$, $E$ and
$d(v)$ when the context is clear.

\begin{definition}
Let $\mathcal{S}$ be a multiset of nonempty sets $\{S_1, \dots,
S_p\}$, that is,
$S_1, \ldots, S_p$ might not be distinct.
We write
\[U(\mathcal{S})=\bigcup_{i=1}^p S_i, \quad\text{and we
call $U$ the universe of $\mathcal{S}$.}\]
The {\em intersection graph\/} of $\mathcal{S}$
is the graph $G(\mathcal{S})=(V,E)$ with
\[V=\{\;S_1,\; \ldots,
\;S_p\;\} \quad\text{and}\quad E=\{\;(S_i,S_j):\;
i\neq j \quad\text{and}\quad S_i\cap S_j\neq \es\;\}.\]
\end{definition}
We say that $\mathcal{S}$ is a {\em set representation\/} of the the
intersection graph $G$. We write $F(G)$ for the set of all set
representations of $G$. The smallest cardinality of a universe,
i.e., $|U(\mathcal{S})|$, for which $G$ has a set representation is
called the intersection number of $G$ and it is denoted by
$\theta(G)$.

\bigskip

We distinguish the following categories of set representations
$\mathcal{S}=\{S_1,\dots,S_p\}$.
\begin{description}
\item[Distinct] if no two sets $S_i$ and $S_j$ in $\mathcal{S}$
are the same.
\item[Antichain] if no set $S_i$ is a subset of another set $S_j$.
\item[Uniform] if all subsets $S_i$ have the same cardinality.
\item[Simple] if any two subsets have at most one element in common.
\end{description}

\bigskip

Below follows some notation and terminology.
\begin{enumerate}[\rm 1.]
\item The intersection
numbers of distinct, antichain, uniform and simple set
representations $\mathcal{S}$ of $G$ are denoted by
\[\theta_d(G), \theta_a(G),
\theta_u(G), \text{ and } \theta_s(G) \mbox{, respectively}.\]
\item Let
$F_d(G)$, $F_a(G)$, $F_u(G)$ and
$F_s(G)$ be the sets of all minimum distinct, antichain,
uniform, and simple set representations $\mathcal{S}$
of $G$. That is,
\[|U(\mathcal{S})|=\theta_d(G) \quad\text{if}\quad
\mathcal{S} \in F_d(G),\] and similarly for the minimum
universes of the other
types.
\item We write
\begin{enumerate}[\rm (a)]
\item $F_{sd}(G)=F_s(G)\cap F_d(G)$,
\item $F_{sa}(G)=F_s(G)\cap F_a(G)$,
\item $F_{su}(G)=F_s(G)\cap F_u(G)$ and
\item $F_{sdu}(G)=F_{sd}(G)\cap F_{su}(G)$.
\end{enumerate}
\item A set
representation $\mathcal{S}$ in $F_\mathrm{sd}(G)$ is called an
sd-set representation. The sa, su and sdu-representations are
defined similarly. The minimal cardinalities of a universe
$U(\mathcal{S})$ with $\mathcal{S} \in F_\mathrm{sd}(G)$, is
denoted by $\theta_\mathrm{sd}(G)$. The parameters
$\theta_\mathrm{sa}(G)$, $\theta_\mathrm{su}(G)$ and
$\theta_\mathrm{sdu}(G)$ are defined similarly.
\end{enumerate}

\bigskip

It seems that Szpilrajn-Marczewski~\cite{szpi45} first came up with
the idea of an intersection graph and a set representation although,
often it is attributed to Erd\H{o}s, Goodman and
P\'{o}sa~\cite{erdo66}. In~\cite{kou78}, Kou, Stockmeyer and Wong
proved that the computation of $\theta(G)$ is an NP-complete
problem. Poljak, R\H{o}dl and Turz\'{\i}k proved the NP-completeness
of $\theta_\mathrm{d}(G)$ and $\theta_\mathrm{s}(G)$~\cite{polj81}.
We refer to~\cite{tsuc98} for the NP-completeness of
$\theta_\mathrm{a}(G)$ and $\theta_\mathrm{u}(G)$. Kong and Wu
investigated bounds and relations between the various categories of
set representations~\cite{kong09}. We remark that the sets
$F_{d}(G)$, $F_{a}(G)$, $F_{u}(G)$ and $F_{s}(G)$ are not empty
(see, eg,~\cite[Theorem~2.5]{hara72}).

\begin{definition}
Two set representations $\mathcal{S}$ and $\mathcal{S}'$ of
$F(G)$ are isomorphic
if there is a bijection
$U(\mathcal{S}) \rightarrow U(\mathcal{S}')$ which maps
each set of
$\mathcal{S}$ to a unique set of
$\mathcal{S}'$.
\end{definition}

\begin{definition}
A graph $G$ is {\em
uniquely intersectable\/} if all elements
of
$F(G)$ are {\em isomorphic\/}.
\end{definition}
Alter and Wang~\cite{alte77}
studied uniquely intersectable graphs.
Unique simple, distinct, and antichain intersectability
was subsequently
studied in~\cite{bylk97,maha99,tsuc90}.

\bigskip

We parameterize intersectability as follows.

\begin{definition}
\label{intersectable} Let $F$ be some category of set
representations. We say that $F$ is {\em of type $\ell$\/}  if its
members are partitioned into $\ell$ equivalence classes by the
isomorphism relation. We call $\ell$ the {\em type\/} of $F$ and we
denote it by $\tau(F)$.
\end{definition}

When $F$ is the collection of all set representations of a certain
category $\tt x$, then we also write $\tau_{\tt x}(G)$ instead of
$\tau(F)$. Thus, by definition,
\[\tau(F_s(G))=\tau_s(G)=1, \quad \tau(F_d(G))=\tau_d(G)=1
\quad\text{and}\quad
\tau(F_a(G))=\tau_a(G)=1.\]

\bigskip

The {\em linegraph\/} of a graph $G=(V,E)$ is
the graph $G^*$ with
\[V(G^*)=E \quad\text{and}\quad
E(G^*)=\{ef: \{e,f\}\subseteq E \text{ and }
e \cap f \neq \es\}.\]
Bylka and Komar \cite{bylk97} and Li and Chang \cite{li08}
investigated the characterization of graphs $G$ with
$\tau(F_{s}(G^*))=1$.

We summarize our results in this paper as follows.

\begin{enumerate}[\rm(1)]
\item If $G$ is not one the following graphs: $K_4, W_t, 3K_2\vee
K_1$, a star, or a $1t$-peacock, then
\[\theta_{sd}(G^*)=|V_i|+\sum_{i=1}^{k}m_i.\]
The set $V_i$ is the set of `inland vertices,' which we define
in Definition~\vref{df critical and inland}. The sum is the
total number of vertices of degree one that are adjacent to some
inland vertex. We prove this formula in
Theorem~\ref{LessThanGamma}.
\item If $G$ is not one of the following
graphs: $K_3$, $K_4$, $W_t$, a star, or a tailed peacock,
then
\[\theta_{sa}(G^*)=|V_i|+\sum_{i=1}^{k}(m_i+1).\]
We prove this formula in Theorem~\vref{LessThanGamma'}.
\item
\[\tau_{sd}(G^*)=
\begin{cases}
2 & \text{if $G$ is $K_4$, $W_t$, or a TP$_1$,}\\
3 & \text{if $G$ is $3K_2\vee K_1$,}\\
2+N_{PP}(d(v), r) & \text{if $G$ is a $v$-star with $d(v)\geq 3$,}\\
2^{|V_{3w}|} & \text{otherwise.}
\end{cases}\]
\item
\[\tau_{sa}(G^*)=
\begin{cases}
5 & \text{if $G=\mathrm{TP}_2$, $m_1,m_2\geq 2$, $m_1\neq m_2$,}\\
4 & \text{if $G=\mathrm{TP}_2$, $m_1=m_2\geq 2$,}\\
3 & \text{if $G=\mathrm{TP}_2$, $m_1\geq 2$, $m_2=1$,}\\
2 & \text{if $G \in \{K_4,W_t, \mathrm{TP}'\}$,}\\
1 & \text{if $G$ is $K_3$,}\\
1+N_{PP}(d(v), r) & \text{if $G$ is a $v$-star with $d(v)\geqslant 3$,}\\
2^x y^z & \text{otherwise.}
\end{cases}\]
\item
\[\tau_\mathrm{sdu}(G^*)=
\begin{cases}
2 & \text{if $G \in \{K_4,W_2,\mathrm{TP}_1\}$, $m_1=1$,}\\
N_{PP}(d(v), r) & \text{if $G=H$,
$\theta_{sdu}(K_{d(v)})=d(v)$,}\\
1+N_{PP}(d(v)+1, r) & \text{if $G=H$,
$\theta_{sdu}(K_{d(v)})=d(v)+1$,}\\
1 & \text{otherwise.}
\end{cases}\]
\end{enumerate}
Peacocks will be defined in Definition~\vref{df peacocks} and
$v$-stars in Definition~\vref{df v-stars}. Above, we denote tailed
peacocks by TP; TP$_1$ is a $1t$-peacock; and TP$_2$ is a
$2t$-peacock. The graph TP$'$ denotes the tailed peacocks TP
excluding the TP$_2$ in previous conditions. The graph $H$ denotes
$v$-stars with $d(v)\geq 4$, $x$ is the number of vertices $v_i$
with $m_i=2$ and $d(v_i)=3$, $y=3+N_{PP}(m_i+1,r)$ and $z$ is the
number of vertices $v_i$ with $m_i\geq 3$ satisfying $d(v_i)=m_i+1$.

\bigskip

This paper is organized as follows. Section~\ref{Preliminaries} has
the preliminaries. Section~\ref{intersectabilityofK_n} deals with
the types $\tau_\mathrm{sd}(K_n), \tau_\mathrm{sa}(K_n)$ and
$\tau_\mathrm{sdu}(K_n)$. Sections~\ref{thetaF_sd}
and~\ref{thetaF_sa,su,sdu} contains the derivation of the types
$\tau_\mathrm{sd}(G^*),  \tau_\mathrm{sa}(G^*)$ and
$\tau_\mathrm{sdu}(G^*)$.

\section{Preliminaries}
\label{Preliminaries}

For $n \in \mathbb{N}$, let $[n]=\{1,\ldots,n\}$.
If every pair of vertices in a
graph is adjacent then the graph is called a clique.
A clique with $n$ vertices is denoted as $K_n$.
A {\em $k$-clique\/} in
graph $G$ is an induced subgraph of $G$ which is a clique with
$k$ vertices.
{\em trivial clique\/} contains only one vertex. A
3-clique is also called a triangle.

\begin{definition}
A set $\mathcal{Q}=\{Q_1, \ldots, Q_p\}$ of cliques in $G$ is an
{\em edge-clique cover\/} of $G$ if
\[V(G)=\bigcup_{i=1}^p V(Q_i) \quad\text{and}\quad
E(G)=\bigcup_{i=1}^p E(Q_i).\] An edge-clique
cover $\mathcal{Q}$ is called an
{\em edge-clique partition\/} if $E(Q_i)\cap E(Q_j)=\es$
for all distinct $i,j\in [p]$.
\end{definition}

In Section~\ref{n clique covers}, we recall that,
by the Erd\H{o}s -- De Bruijn theorem, there exist two
kinds of nontrivial edge-clique covers $\mathcal{Q}$ of $K_n$ with
$|\mathcal{Q}|=n$. We introduce a third, trivial one
for ease of future arguments.
In Section~\ref{simple represenation}, we show
that, via a bijection introduced by Erd\H{o}s, Goodman and P\'{o}sa, one
obtains at least three kinds of nonisomorphic simple set representations
$\mathcal{S}$ of $K_n$ with $\theta_\mathrm{s}(K_n)=n$.

\subsection{Edge clique-covers $\mathcal{Q}$ of $K_n$ with $|\mathcal{Q}|=n$}
\label{n clique covers}

\begin{definition}
\label{FLS} A {\em finite linear space\/} $\Gamma$, abbreviated
$\FLS$, is a pair $\Gamma=(P,L)$, where $P$ is a set of $n$ {\em
points\/} and $L$ is a set of {\em lines\/} satisfying the following
conditions:
\begin{description}
\item[(L1)] A line is a set of at least two and at most $n-1$ points.
\item[(L2)] Any two points are on exactly one line.
\end{description}
\end{definition}

\begin{definition}
An $\FLS$ is a {\em finite projective plane\/}, abbreviated $\PP$,
if the following two conditions are satisfied:
\begin{description}
\item[(P1)] Any two distinct lines intersect in exactly one point.
\item[(P2)] There are four points of which no three are on a line.
\end{description}
\end{definition}

The following theorem is well-known, see, e.g.,~\cite{batt93}.

\begin{theorem}
\label{project plane}
If $\Pi$ is a $\PP$ with $n$ points and $l$ lines, then
there exists an integer $r$, called the order of the
plane, such that
\[n=l=r^2+r+1.\]
Furthermore, each
point lies on $r+1$ lines and each line contains
$r+1$ points.
\end{theorem}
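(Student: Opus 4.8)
The plan is to exploit the striking self-duality of the two defining conditions of a $\PP$: by (L2) any two points determine a unique line, and by (P1) any two lines determine a unique point. The engine of the whole argument is a single \emph{projection} bijection, which I will call $(\star)$: \emph{if a point $p$ does not lie on a line $\ell$, then the number of lines through $p$ equals the number of points on $\ell$.} To see $(\star)$, send each point $x\in\ell$ to the line $px$ (well defined, and distinct from $\ell$, by (L2)), and send each line $m\ni p$ to the point $m\cap\ell$ (a single point by (P1), since $m\neq\ell$). These two maps are mutually inverse, because a line through $p$ meeting $\ell$ in two points would coincide with $\ell$ by (L2), contradicting $p\notin\ell$. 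Everything else follows by feeding general-position points into $(\star)$.

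Next I would pin down enough points in general position. From (P2) fix four points $A,B,C,D$ with no three collinear; then $AB,AC,AD$ are three distinct lines through $A$ (two of them coinciding would put three of the four points on one line), so $A$ lies on at least three lines, and likewise for $B,C,D$. Applying $(\star)$ I get that \emph{every line not through $A$ has at least three points}, and the same with $B$; since any line other than $AB$ misses $A$ or misses $B$, while $AB$ itself contains the third point $AB\cap CD$, I conclude that \emph{every line has at least three points}. This lets me produce the crucial \emph{external point}: given distinct lines $\ell_1,\ell_2$ meeting at $x$, pick $a_i\in\ell_i\setminus\{x\}$; the line $a_1a_2$ has a third point $w$, and $w$ lies on neither $\ell_i$ (else $w\in\ell_i\cap(a_1a_2)=\{a_i\}$, forcing $w=a_i$). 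With an external point in hand, $(\star)$ forces $\ell_1$ and $\ell_2$ to have equally many points, so all lines share a common size, which I write as $r+1$; the dual statement (a line through neither of two given points, proved the same way) shows every point lies on the same number of lines, and pairing a point with a non-incident line through $(\star)$ identifies that number as $r+1$ as well.

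Finally I would count. Fixing a point $p$, the $r+1$ lines through it partition the remaining points (each $q\neq p$ lies on the unique line $pq$, and on no second line through $p$ by (L2)), and each such line contributes $r$ points besides $p$; hence $n=1+(r+1)r=r^2+r+1$. A flag count then gives $l=n$: summing point--line incidences point-by-point yields $n(r+1)$, line-by-line yields $l(r+1)$, so $n=l=r^2+r+1$, with $r$ an integer since it is one less than the common finite number of points on a line. I expect the main obstacle to be the general-position bookkeeping of the middle paragraph --- guaranteeing the third point on a line and the point off two given lines --- since this is exactly where (P2) must be invoked, and where a careless argument silently assumes the very regularity being proved; once $(\star)$ and the external-point lemma are secured, the counting is routine.
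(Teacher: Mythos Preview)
Your argument is correct and is essentially the standard proof of this classical result. The projection bijection $(\star)$ is set up correctly, the general-position bookkeeping using (P2) is handled with appropriate care (producing a third point on $AB$ as $AB\cap CD$, and an external point off two given lines via a third point on a transversal), and the final double count is clean. One small remark: once you know every line has $r+1$ points, you do not strictly need the dual ``line through neither of two points'' construction; for any single point $p$ there is a line missing $p$ (e.g.\ one of the six lines determined by $A,B,C,D$), and $(\star)$ immediately gives that $p$ lies on $r+1$ lines. But your route is fine.

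The paper itself does not supply a proof: the theorem is stated as well known, with a reference to Batten and Beutelspacher's monograph. So there is no in-paper argument to compare yours against; you have furnished a complete, self-contained proof where the paper simply cites the literature.
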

The $\PP$ of order $2$ is the well-known \textit{Fano Plane}
(See~Figure~\ref{Fano Plane}).

\begin{figure}[htb]
\centering
\includegraphics[scale=0.3]{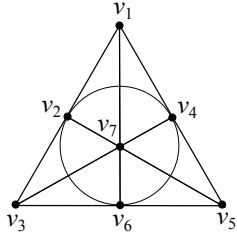}
\caption{Fano Plane.}\label{Fano Plane}
\end{figure}

It is well-known that there are unique $\PP$'s of orders 2, 3, 4, 5, 7
and 8 and none of orders 6 or 10. Moreover, there are exactly four
non-isomorphic $\PP$'s of order 9, namely the Desarguesian Plane, the
Left Nearfield Plane, the Right Nearfield Plane and the Hughes
Plane. In this paper, we use $N_\mathrm{PP}(n,r)$ to denote the
number of non-isomorphic $\PP$'s with $n$ points and order $r$.

\bigskip

Let $n \in \mathbb{N}$ and $m \in \mathbb{N}$ and assume $m > 1$.
Consider a set of $n$ elements, say $\mathcal{U}=[n]$. Let $A_1,
\ldots, A_m$ be subsets of $\mathcal{U}$ such that every pair of
elements of $\mathcal{U}$ is contained in exactly one of them. De
Bruijn and Erd\H{o}s proved the following theorem~\cite{brui48}.

\begin{theorem}
\label{deBrijnErdos}
We have that $m\geq n$. Equality occurs only in
one of the two following cases.
\begin{enumerate}[\rm (a)]
\item
\label{thmdbea}
One set, say $A_1$ contains $n-1$ elements,
say $A_1=[n-1]$. The other sets are then, without
loss of generality,
\[A_2=\{1,n\}, A_3=\{2, n\}, \ldots, A_n=\{n-1, n\}.\]
\item For some $r \in \mathbb{N}$, $n = r^2+r+1$.
All subsets have $r+1$ elements and each element of
$\mathcal{U}$ is in exactly $r+1$ subsets.
\end{enumerate}
\end{theorem}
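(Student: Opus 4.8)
The plan is to prove the inequality and the equality characterization separately, the first by a Fisher-type rank argument and the second by a pair of counting identities that pin down the incidence structure. Throughout I treat the $A_i$ as the \emph{lines} of a linear space on the point set $\mathcal{U}=[n]$; by (L1) every line satisfies $2\le|A_i|\le n-1$, which I take as the standing nondegeneracy assumption (when some $A_i$ has fewer than two or all $n$ points the situation is degenerate under $m>1$). Write $r_p$ for the number of lines through a point $p$ and $k_i=|A_i|$. The one lemma I use repeatedly is: if $p\notin A_i$, then $r_p\ge k_i$. Indeed, sending $q\in A_i$ to the unique line through $p$ and $q$ is an injection into the lines through $p$, since two points of $A_i$ together with $p$ would otherwise lie on two common lines.

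For the inequality I would form the $n\times m$ point-line incidence matrix $N$ and compute $NN^{T}$: its $(p,q)$ entry counts the lines through both $p$ and $q$, hence equals $1$ for $p\neq q$ by (L2) and $r_p$ for $p=q$. Thus $NN^{T}=J+\operatorname{diag}(r_p-1)$, where $J$ is the all-ones matrix. Nondegeneracy forces $r_p\ge 2$ for every $p$ (a point on a single line would put all of $\mathcal{U}$ on that line), so $\operatorname{diag}(r_p-1)$ is positive definite and $J$ positive semidefinite; hence $NN^{T}$ is positive definite, in particular nonsingular. Therefore $n=\operatorname{rank}(NN^{T})\le\operatorname{rank}(N)\le m$, which is the claim $m\ge n$.

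For the equality case $m=n$ I would use two exact counting identities. Fixing a point $p$ and summing over the $m-r_p$ lines avoiding it gives $\sum_{(p,A_i):\,p\notin A_i}\frac{1}{m-r_p}=n$, and fixing a line $A_i$ and summing over the $n-k_i$ points off it gives $\sum_{(p,A_i):\,p\notin A_i}\frac{1}{n-k_i}=m$ (both denominators are nonzero, since $k_i\le n-1$ by (L1) and no point lies on all $n$ lines). Setting $m=n$ makes the two left-hand sides equal; since the lemma gives $r_p\ge k_i$, hence $\frac{1}{n-r_p}\ge\frac{1}{n-k_i}$ term by term, equality of the sums forces $r_p=k_i$ for every non-incident pair. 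Consequently the lines through $p$ biject with the points of any line $A_i$ missing $p$, so every line through $p$ meets $A_i$; in particular no two lines are disjoint, i.e.\ any two lines meet in exactly one point, which is (P1).

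It then remains to split on (P2). If there exist four points no three of which are collinear, then by definition the structure is a projective plane, and Theorem~\ref{project plane} supplies $n=r^{2}+r+1$ with every line of size $r+1$ and every point on $r+1$ lines, giving case~(b). If (P2) fails, I would show that some line has exactly $n-1$ points: taking a line $L$ of maximum size together with two points $u,v$ off it, one checks that $u,v$ and any two points of $L$ form a general-position quadruple as soon as $|L|\ge 3$ and at least two points lie off $L$, so the failure of (P2) forces $|L|\ge n-1$ (the only alternative, all lines of size two, occurs solely for $n=3$). Once a line $A_1$ of size $n-1$ exists, its unique missing point $z$ is joined to the $n-1$ points of $A_1$ by $n-1$ distinct two-point lines, and these together with $A_1$ exhaust the $n$ lines — precisely the near-pencil of case~(a). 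The main obstacle is this final dichotomy: the rank argument and the identity yielding (P1) are short, but converting the failure of (P2) into the exact near-pencil structure is the delicate, case-sensitive step, and one must also dispose of the small cases (notably $n=3$, where the two families coincide) by hand.
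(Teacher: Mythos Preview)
The paper does not supply a proof of this theorem; it is quoted from de~Bruijn and Erd\H{o}s~\cite{brui48} as a known result, so there is no argument in the paper to compare yours against. That said, your approach is the standard modern one and is essentially correct: the Fisher-type rank argument for $m\ge n$ is clean, and the double-counting identity forcing $r_p=k_i$ on every non-incident pair is exactly what is needed to get (P1) in the equality case, after which the split on (P2) yields the two families.

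One small imprecision in the (P2)-failure step: it is not true that $u$, $v$, and \emph{any} two points of $L$ are in general position, since the line through $u$ and $v$ may meet $L$. What is true, and sufficient, is that you can \emph{choose} two points $a,b\in L$ avoiding the (at most one) point of $L$ on the line $uv$; then no three of $u,v,a,b$ are collinear. Since $|L|\ge 3$, such $a,b$ exist, and the dichotomy goes through. With this fix the near-pencil description in case~(a) follows exactly as you outline.
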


\begin{remark}
Notice that we may assume that each subset $A_i$ has at least
two elements, otherwise we could simply remove some of them.
\end{remark}

\begin{corollary}
\label{deBr} Let $n \geq 3$ and let $\mathcal{Q}$ be an edge-clique
partition of $K_n$ with $|\mathcal{Q}|>1$. If $\mathcal{Q}$ contains
no trivial clique, then $|\mathcal{Q}|\geq n$ and equality holds
only in one of the two following cases.
\begin{enumerate}[\rm (a)]
\item
\label{(a)}
$\mathcal{Q}$ consists of one clique with
$n-1$ vertices and $n-1$ copies of $K_2$ or,
\item the $\FLS$ implied by $\mathcal{Q}$ is a $\PP$.
\end{enumerate}
\end{corollary}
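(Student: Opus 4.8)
The plan is to derive Corollary~\ref{deBr} from Theorem~\ref{deBrijnErdos} by setting up a correspondence between edge-clique partitions of $K_n$ and families of subsets of the point set. Given an edge-clique partition $\mathcal{Q}=\{Q_1,\ldots,Q_m\}$ of $K_n$ with no trivial clique, I would identify $V(K_n)$ with the point set $\mathcal{U}=[n]$ and associate to each clique $Q_i$ the subset $A_i=V(Q_i)\subseteq\mathcal{U}$. Since every clique is nontrivial, each $A_i$ has at least two elements, matching condition (L1) and the remark. The crucial observation is that the edge-clique \emph{partition} property translates exactly into the De Bruijn--Erd\H{o}s hypothesis: the edges of $K_n$ are all pairs of vertices, so $E(K_n)=\bigcup_i E(Q_i)$ says every pair of points lies in \emph{some} $A_i$, while the disjointness $E(Q_i)\cap E(Q_j)=\es$ says no pair lies in two of them. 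Together these give that every pair of elements of $\mathcal{U}$ is contained in \emph{exactly one} $A_i$, which is precisely the hypothesis of Theorem~\ref{deBrijnErdos}.

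With this dictionary in place, I would invoke Theorem~\ref{deBrijnErdos} directly: it yields $m\geq n$, and equality forces one of the two listed structures. Case~(a) of the theorem, in which $A_1=[n-1]$ and the remaining sets are the $n-1$ pairs $\{i,n\}$, corresponds to a clique on $n-1$ vertices together with $n-1$ copies of $K_2$, giving case~\ref{(a)} of the corollary. Case~(b), where $n=r^2+r+1$ and every set has $r+1$ elements with each point in exactly $r+1$ sets, is by Definition~\ref{FLS} and Theorem~\ref{project plane} precisely the statement that the $\FLS$ is a projective plane, giving case~(b).

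**The one point needing care** is the translation of the \emph{uniqueness} of containment, and the verification that the structure arising from an edge-clique partition genuinely satisfies axioms (L1) and (L2) of a finite linear space, so that the phrase ``the $\FLS$ implied by $\mathcal{Q}$'' is well-defined. Axiom (L2) is exactly the ``exactly one'' condition just established, and (L1) follows from nontriviality of the cliques together with the bound $|A_i|\leq n-1$; the latter is where the hypothesis $|\mathcal{Q}|>1$ enters, since a single clique covering all of $K_n$ (the set $A_1=[n]$ with $m=1$) is excluded, ensuring no $A_i$ exhausts all $n$ points. I would check that when $m>1$ no subset can have all $n$ elements: if some $A_i$ had $n$ vertices it would already cover every edge by itself, contradicting disjointness with any other nonempty clique. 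The remaining content is a routine matching of the two theorems' conclusions to the two corollary cases, so I expect **the main obstacle** to be purely expository—stating the point/line dictionary cleanly rather than any substantive mathematical difficulty, since the corollary is essentially a restatement of Theorem~\ref{deBrijnErdos} in the language of clique partitions.
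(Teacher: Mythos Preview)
Your proposal is correct and is exactly the intended derivation: the paper states Corollary~\ref{deBr} immediately after Theorem~\ref{deBrijnErdos} and the accompanying Remark without giving any separate proof, so the corollary is meant to be read as a direct translation of the De~Bruijn--Erd\H{o}s theorem into the language of edge-clique partitions via the very dictionary you describe. Your handling of the hypothesis $|\mathcal{Q}|>1$ (to guarantee $|A_i|\leq n-1$) and of nontriviality (to guarantee $|A_i|\geq 2$) matches the paper's framing, and the identification of the two equality cases with the near-pencil and the projective plane is precisely what the paper intends.
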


The $\FLS$'s corresponding to edge-clique partitions as in
Condition~(\ref{(a)}) of Corollary~\ref{deBr} are conventionally
referred to as \textit{near-pencils}, abbreviated $\NP$. Let
$\NP(K_n)$ and $\PP(K_n)$ denote the edge-clique partitions of $K_n$
obtained from its corresponding $\NP$ and $\PP$. Table~\ref{PPandNP}
illustrates $\NP(K_7)$ and $\PP(K_7)$. Note that clique $Q_i$ of
$\PP(K_7)$ is corresponding to $\ell_i$ for $1\leq i\leq 7$ of Fano
plane as shown in Figure~\ref{Fano Plane}.

\begin{table}[htb]
\begin{center}\small
\caption{Edge clique-covers $\mathcal{Q}$ of $K_7$ with
$|\mathcal{Q}|=7$}\label{PPandNP}
\begin{tabular}{|l|l|l|l|}\hline
\multicolumn{1}{|c|}{$\mathcal{Q}$} &
\multicolumn{1}{|c|}{N-P($K_7$)} &
\multicolumn{1}{|c|}{PP($K_7$)} &
\multicolumn{1}{|c|}{SP($K_7$)} \\
\hline\hline
 $Q_1$ & $\{v_1,\ldots,v_6\}$ & $\{v_1,v_2,v_3\}$ & $\{v_1,\ldots,v_7\}$ \\
 \rowcolor[gray]{0.8}\color{black} $Q_2$ & $\{v_1, v_7\}$ & $\{v_1,v_4,v_5\}$ & $\{v_2\}$ \\
 $Q_3$ & $\{v_2,v_7\}$ & $\{v_1,v_6,v_7\}$ & $\{v_3\}$ \\
 \rowcolor[gray]{0.8}\color{black} $Q_4$ & $\{v_3,v_7\}$ & $\{v_2,v_4,v_6\}$ & $\{v_4\}$ \\
 $Q_5$ & $\{v_4,v_7\}$ & $\{v_2,v_5,v_7\}$ & $\{v_5\}$ \\
 \rowcolor[gray]{0.8}\color{black} $Q_6$ & $\{v_5,v_7\}$ & $\{v_3,v_4,v_7\}$ & $\{v_6\}$ \\
 $Q_7$ & $\{v_6,v_7\}$ & $\{v_3,v_5,v_6\}$ & $\{v_7\}$ \\
\hline
\end{tabular}
\end{center}
\end{table}

A third edge-clique cover $\mathcal{Q}$ of $K_n$ with
$|\mathcal{Q}|=n$ is defined as follows. Let
$\mathcal{Q}=\{Q_1,\ldots,Q_n\}$ with $Q_1=\{v_1,\ldots,v_n\}$ and
$Q_i=\{v_i\}$ for $i \in [n]\setminus \{1\}$. It is a trivial
partition since all edges are in $Q_1$ and the other
$Q_i$ contain no edges. We call $\mathcal{Q}$ the silly partition
and denote it by $\SP(K_n)$. We introduce it because it eases some
of the arguments.

\subsection{A bijection between set representations and edge-clique covers}
\label{simple represenation}

In \cite{erdo66}, Erd\H{o}s, Goodman and P\'{o}sa found a bijection,
called {\em $\EGP$-bijection}, between set representations and
edge-clique covers of a graph $G$, as described below.

\bigskip

Let $\mathcal{Q}=\{Q_1, \ldots, Q_p\}$ be an edge-clique cover of
graph $G$.  For every $v_i\in V$ with $i\in [n]$, let
\begin{equation}
\label{eqn0}
S_i=\{ Q_j: j \in [p] \text{ and } v_i \in Q_j\}.
\end{equation}

Obviously, $v_iv_j \in E$ if and only if $S_i\cap S_j\neq \es$. Thus
$\mathcal{S}=\{S_i|v_i \in V\} \in F(G)$. Conversely, let
$\mathcal{S}\in F(G)$. We obtain an edge-clique cover for $G$ as
follows. Let
\[\mathcal{S}=\{S_i: v_i \in V\}
\text{ and } U(\mathcal{S})=\{s_1, s_2, \dots, s_p\}.\] Define $Q_j
= \{v_i: j \in [p] \text{ and } s_j \in S_i\}$. Now for any edge $pq
\in E$,
\begin{equation}
\label{eqn1}
\{p,q\} \subseteq Q_j \mbox{ if and only if } s_j \in S_p \cap S_q.
\end{equation}
Thus
$\mathcal{Q}=\{Q_1,\ldots,Q_p\}$ covers the edges of $G$.

\bigskip

Hereafter, we call the set representation which is the $\EGP$-image
of an edge-clique cover an $\EGP$-set. Likewise, we call the clique
cover which is the $\EGP$-image of a set representation an
$\EGP$-cover. We use $\EGP(\mathcal{S})$ and $\EGP(\mathcal{Q})$ to
denote the $\EGP$-cover and $\EGP$-set, respectively. When the
$\EGP$-image of an edge-clique cover $\mathcal{Q}$ is simple we
denote it by $\EGP_s(\mathcal{Q})$.

\bigskip

When $\mathcal{Q}$ is an edge-clique partition then, by
Equation~\eqref{eqn0}, any two sets $S_x$ and $S_y$ in
$\EGP_s(\mathcal{Q})$ intersect in at most one element. That is,
$\EGP_s(\mathcal{Q}) \in F_s(G)$. Conversely, let $\mathcal{S} \in
F_s(G)$ and let $\mathcal{Q}$ be the $\EGP$-image of $\mathcal{S}$.
Let $xy \in E(G)$. By Equation~\eqref{eqn1}, there is exactly one
$s_j \in S_x \cap S_y$ and so, $\{x,y\}$ is in exactly one $Q_j \in
\mathcal{Q}$. That is, $\mathcal{Q}$ is an edge-clique partition.
Table~\ref{EGP-sets} illustrates the $\EGP$-sets from the
edge-clique covers in Table~\ref{PPandNP}.

\begin{table}[htb]
\begin{center}\small
\caption{EGP-sets of $K_7$}\label{EGP-sets}
\begin{tabular}{|l|l|l|l|}\hline
\multicolumn{1}{|c|}{$\mathcal{S}$} &
\multicolumn{1}{|c|}{EGP$_\mathrm{s}$(N-P($K_7$))} &
\multicolumn{1}{|c|}{EGP$_\mathrm{s}$(PP($K_7$))} &
\multicolumn{1}{|c|}{EGP$_\mathrm{s}$(SP($K_7$))} \\
\hline\hline
 $S_1$ & $\{Q_1,Q_2\}$ & $\{Q_1,Q_2,Q_3\}$ & $\{Q_1\}$ \\
 \rowcolor[gray]{0.8}\color{black} $S_2$ & $\{Q_1,Q_3\}$ & $\{Q_1,Q_4,Q_5\}$ & $\{Q_1,Q_2\}$ \\
 $S_3$ & $\{Q_1,Q_4\}$ & $\{Q_1,Q_6,Q_7\}$ & $\{Q_1,Q_3\}$ \\
 \rowcolor[gray]{0.8}\color{black} $S_4$ & $\{Q_1,Q_5\}$ & $\{Q_2,Q_4,Q_6\}$ & $\{Q_1,Q_4\}$ \\
 $S_5$ & $\{Q_1,Q_6\}$ & $\{Q_2,Q_5,Q_7\}$ & $\{Q_1,Q_5\}$ \\
 \rowcolor[gray]{0.8}\color{black} $S_6$ & $\{Q_1,Q_7\}$ & $\{Q_3,Q_4,Q_7\}$ & $\{Q_1,Q_6\}$ \\
 $S_7$ & $\{Q_2,\ldots,Q_7\}$ & $\{Q_3,Q_5,Q_6\}$ & $\{Q_1,Q_7\}$ \\
\hline
\end{tabular}
\end{center}
\end{table}

\begin{lemma}
For $n \geq 3$, the set representations $\EGP(\NP(K_n))$,
$\EGP(\PP(K_n))$ and $\EGP(\SP(K_n))$ are all in $F_{sd}(K_n)$.
\end{lemma}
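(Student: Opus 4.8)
The plan is to verify the three ingredients of membership in $F_{sd}(K_n)$ one at a time: that each of the three representations is simple, that each is distinct, and that each attains the minimum universe size among simple-distinct representations, which I will argue is exactly $n$. Simplicity is the cheapest step. Each of $\NP(K_n)$, $\PP(K_n)$ and $\SP(K_n)$ is an edge-clique partition of $K_n$: for $\SP(K_n)$ every edge lies in the single nontrivial clique $Q_1$ while the singletons $Q_2,\dots,Q_n$ carry no edge, so the edge sets are pairwise disjoint; for $\NP(K_n)$ and $\PP(K_n)$ this is exactly their construction via Corollary~\ref{deBr}. Since the $\EGP$-image of an edge-clique partition is simple, as established above, all three images are simple, and each has universe $\{Q_1,\dots,Q_n\}$ of size $n$.

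For distinctness I would simply read off the $\EGP$-sets. For $\SP(K_n)$ one gets $S_1=\{Q_1\}$ and $S_i=\{Q_1,Q_i\}$ for $i\geq 2$, which are pairwise different; for $\NP(K_n)$ one gets $S_i=\{Q_1,Q_{i+1}\}$ for $i<n$ together with $S_n=\{Q_2,\dots,Q_n\}$, again pairwise different. For $\PP(K_n)$ the set $S_i$ is the family of lines through the point $v_i$, which by Theorem~\ref{project plane} has $r+1\geq 2$ members; if two distinct points shared the same family they would lie on two common lines, contradicting that two lines of a projective plane meet in a unique point, so these sets are distinct as well. Hence each of the three representations is both simple and distinct.

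The substantive step is the lower bound $\theta_{sd}(K_n)\geq n$. Through the $\EGP$-bijection, any simple-distinct representation of $K_n$ corresponds to an edge-clique partition $\mathcal{Q}$ whose $\EGP$-sets are pairwise distinct, and its universe size equals $|\mathcal{Q}|$. I would split $\mathcal{Q}$ into its nontrivial cliques and its singletons. The nontrivial cliques already cover every edge, hence every vertex, so they by themselves form an edge-clique partition of $K_n$ containing no trivial clique. If there are at least two of them, Corollary~\ref{deBr} forces their number, and a fortiori $|\mathcal{Q}|$, to be at least $n$. If there is exactly one nontrivial clique, it must cover all edges and is therefore $K_n$ itself; then every $\EGP$-set contains this clique, so the vertices can be separated only by the singletons, and at least $n-1$ of them are needed to make the $n$ sets distinct, giving $|\mathcal{Q}|\geq n$ once more. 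As $n\geq 2$ rules out having no nontrivial clique, I conclude $\theta_{sd}(K_n)=n$, matched by all three constructions.

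I expect the main obstacle to be exactly this last paragraph, and within it the singleton case: Corollary~\ref{deBr} is stated only for partitions with no trivial clique, so it does not directly bound partitions that use singletons, and one must argue separately that distinctness makes singletons an expensive way to economize on nontrivial cliques. Once the lower bound is in hand, all three representations are simple, distinct, and of minimum universe size $n$, which is precisely what membership in $F_{sd}(K_n)$ requires.
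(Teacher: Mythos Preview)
Your argument is correct, and for the simplicity and distinctness parts it follows essentially the same route as the paper: the paper too writes out the $\EGP$-sets of $\SP(K_n)$ and $\NP(K_n)$ explicitly and, for $\PP(K_n)$, argues via the incidence structure that $|S_x\cap S_y|=1$ and $S_x\neq S_y$.

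Where you diverge is in the last paragraph. The paper's own proof of this lemma does \emph{not} establish the lower bound $\theta_{sd}(K_n)\geq n$; it simply checks ``simple'' and ``distinct'' and concludes membership in $F_{sd}(K_n)$, tacitly relying on the equality $\theta_{sd}(K_n)=n$, which is quoted later as Theorem~\ref{SDomega K_n} from the companion paper~\cite{guo10}. Your self-contained lower-bound argument---strip off the trivial cliques, apply Corollary~\ref{deBr} when at least two nontrivial cliques remain, and count singletons when only the full $K_n$ remains---is a genuine addition. It makes the lemma independent of the cited result and is exactly the kind of case split one needs because Corollary~\ref{deBr} is stated only for partitions without trivial cliques. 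So your proof is slightly longer but strictly more self-contained than the paper's; the paper trades that argument for a citation.
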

\begin{proof}
First consider $\EGP(\SP(K_n))$.
It is easy to verify that, for all $i \neq j$,
\[S_i \neq S_j \quad\text{and}\quad |S_i \cap S_j|=1.\]
Thus
$\EGP(\SP(K_n)) \in F_{sd}(K_n)$.

Since $\NP(K_n)$ and $\PP(K_n)$ are edge-clique partitions of $K_n$,
the set representations $\EGP(\NP(K_n))$ and $\EGP(\PP(K_n))$ are in
$F_s(K_n)$. The sets of $\EGP(\NP(K_n))$, constructed by
Equation~\eqref{eqn0} from the cliques in
Theorem~\vref{deBrijnErdos}~(\ref{thmdbea}), are
\[S_1=\{Q_1, Q_2\}, S_2=\{Q_1, Q_3\}, \ldots,
S_{n-1}=\{Q_1, Q_n\}, S_n=\{Q_2,\ldots,Q_n\}.\] No two sets are the
same and every pair intersect in one element. This proves
$\EGP(\NP(K_n)) \in F_{sd}(K_n)$.

Now consider the edge-clique partition $\PP(K_n)$. For any vertex
$x$, all the lines that contain $x$ intersect only in $x$. By
Equation~\eqref{eqn0} and the fact that $n \geq 3$, this implies
that $|S_x \cap S_y|=1$ and $S_x \neq S_y$ whenever $x \neq y$. This
proves $\EGP(\PP(K_n))\in F_{sd}(K_n)$. This completes the proof.
\qed\end{proof}

\begin{definition}
An {\em h-punctured\/} $\PP$ is a $\PP$ with $h$ points deleted.
\end{definition}
We denote an edge-clique partition derived from
an $h$-punctured $\PP$ by $\PP_h(K_n)$.
The $\EGP$-image of $\PP_h(K_n)$ is denoted by
$\EGP_{s}(\PP_h(K_{n-p}))$.

\begin{example}
Consider the
2-punctured $\PP$ of the Fano plane in Figure~\ref{Fano Plane} with
$v_6$ and $v_7$ removed. It contains the following
lines.
\begin{align*}
& \ell_1=\{v_1,v_2,v_3\} \quad &&
\ell_2=\{v_1,v_4,v_5\} \quad && \ell_3=\{v_2,v_4\} \\
& \ell_4=\{v_2,v_5\} \quad && \ell_5=\{v_3,v_4\} \quad && \ell_6=\{v_3,v_5\}.
\end{align*}
For the edge-clique cover we define $Q_i=\ell_i$ for $1\leq i\leq
6$. Accordingly, the $\EGP$-bijection gives
$\EGP_s(\PP_2(K_5))=\{S_1,\dots,S_5\}$ (see~Figure~\ref{FLS with 5
points 6 lines}), where
\begin{align*}
& S_1=\{Q_1,Q_2\}\quad &&
S_2=\{Q_1,Q_3,Q_4\} \quad &&  S_3=\{Q_1,Q_3\}\\
& S_4=\{Q_2,Q_4\} \quad &&
S_5=\{Q_3,Q_4\}.
\end{align*}

\begin{figure}[htb]
\centering
\includegraphics[scale=0.25]{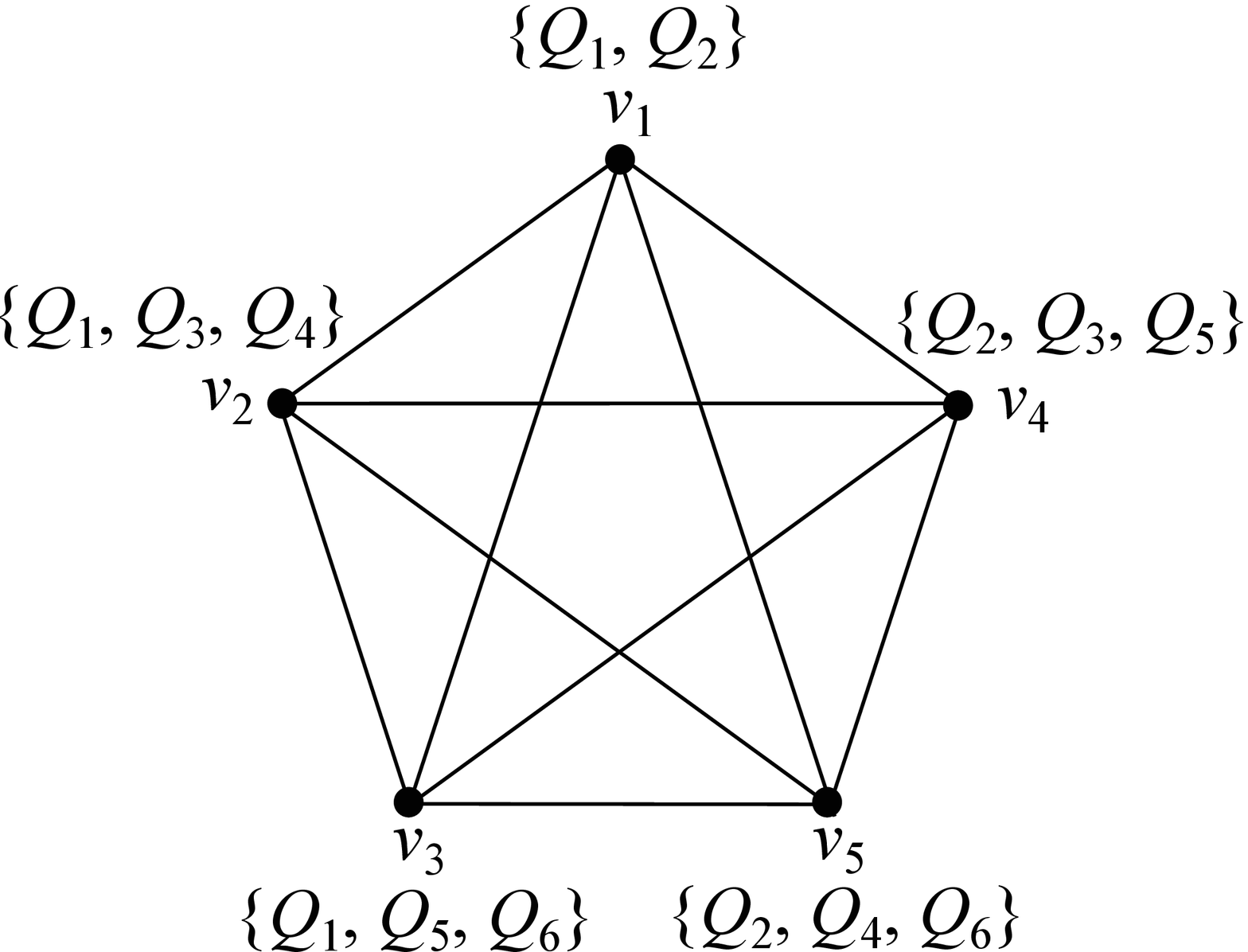}
\caption{$\EGP_{s}(\PP_2(K_5))$.}
\label{FLS with 5 points 6 lines}
\end{figure}
\end{example}

\bigskip

For a proof of the following theorem we refer to~\cite{brid72}.

\begin{theorem}
\label{bridges} Let $\Gamma=(P,L)$ be an $\FLS$ with $n$ points and
$\ell$ lines. The equality $\ell=n+1$ holds if and only if $\Gamma$
is either a $1$-punctured $\PP$ or the $2$-punctured Fano Plane.
\end{theorem}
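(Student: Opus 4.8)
The plan is to prove both implications, dispatching the easy ``if'' direction by direct construction and counting, and devoting the bulk of the effort to the ``only if'' direction via an embedding argument that reduces the statement to the equality case of Theorem~\ref{deBrijnErdos}.

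\textbf{Forward direction.} First I would check that each listed configuration is an $\FLS$ with $\ell=n+1$. For a $1$-punctured $\PP$ of order $r$, start from a $\PP$ with $r^{2}+r+1$ points and $r^{2}+r+1$ lines (Theorem~\ref{project plane}) and delete one point $p$. Since every line of a $\PP$ of order $r\ge 2$ has $r+1\ge 3$ points, each of the $r+1$ lines through $p$ still has $r\ge 2$ points after deletion, so no line degenerates and none is removed; moreover two lines meeting only in $p$ become disjoint and hence remain distinct. Thus all $r^{2}+r+1$ lines survive on $n=r^{2}+r$ points, giving $\ell=n+1$, and the surviving sizes $r,r+1$ satisfy $2\le r\le r+1\le n-1$, so (L1)--(L2) hold. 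For the $2$-punctured Fano plane, delete two points $u,v$; by (L2) they lie on a unique common line, which shrinks to a single point and is discarded, while the remaining $6$ lines keep at least $2$ points. This leaves $n=5$ points and $\ell=6=n+1$ lines, matching the configuration $\EGP_s(\PP_2(K_5))$ of the Example.

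\textbf{Reverse direction, setup.} Now suppose $\Gamma=(P,L)$ is an $\FLS$ with $|P|=n$ and $\ell=n+1$. Writing $r_p$ for the number of lines through a point $p$ and $k_M$ for the number of points on a line $M$, I would record the identities $\sum_{M}\binom{k_M}{2}=\binom{n}{2}$ and $\sum_p r_p=\sum_M k_M$, together with the inequality $r_p\ge k_M$ whenever $p\notin M$ (the lines joining $p$ to the points of $M$ are distinct). By Theorem~\ref{deBrijnErdos} every $\FLS$ has $\ell\ge n$, with equality only for a near-pencil or a $\PP$; since $\ell=n+1$ here, $\Gamma$ is neither.

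\textbf{Reverse direction, embedding.} The core idea is to reconstruct the missing point. Suppose $\Gamma$ possesses a \emph{parallel class}: a family of pairwise disjoint lines whose union is $P$. I adjoin a new point $\infty$ and place it on exactly the lines of this class, obtaining a structure $\Gamma^{+}$ on $n+1$ points. Because the class partitions $P$, each old point lies on exactly one extended line, so $\infty$ is joined to every old point by a unique line; together with (L2) for $\Gamma$ this makes $\Gamma^{+}$ an $\FLS$ with the same $n+1$ lines. Hence $\Gamma^{+}$ attains equality in Theorem~\ref{deBrijnErdos}, so it is a near-pencil or a $\PP$. A near-pencil on $n+1$ points has a line of size $n$, whereas no line of $\Gamma^{+}$ reaches size $n$ (an extended class-line of size $n$ would come from a class-line of size $n-1$, leaving a single uncovered point, impossible for a partition into lines of size $\ge 2$; unextended lines have at most $n-1$ points by (L1)). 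Thus $\Gamma^{+}$ is a $\PP$ and $\Gamma=\Gamma^{+}-\infty$ is a $1$-punctured $\PP$.

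\textbf{The obstacle.} The real work is to show that a parallel class exists in every case \emph{except} the $2$-punctured Fano plane. I would attack this through the line-size distribution: using $\sum_M\binom{k_M}{2}=\binom{n}{2}$ with $n+1$ lines, a convexity/deficiency analysis (the heart of Bridges' argument in~\cite{brid72}) should force the sizes to take only two consecutive values $s$ and $s+1$, with the $s$-lines playing the role of the short lines of a punctured plane; one then shows these short lines partition $P$ unless $n$ is small. Bounding $n$ and disposing of the residual small configurations by a finite check is where the single genuine exception, the $2$-punctured Fano plane with $(n,\ell)=(5,6)$, emerges as the unique $\FLS$ with $n+1$ lines admitting no parallel class. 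I expect this classification step, rather than the embedding itself, to be the main difficulty.
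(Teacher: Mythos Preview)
The paper does not prove Theorem~\ref{bridges}: it states the result and refers the reader to Bridges~\cite{brid72} for a proof. There is therefore no in-paper argument to compare your proposal against.

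On its own merits, your outline is sound where it is explicit. The forward direction is correct, including the check that no lines collapse or coincide after puncturing. For the reverse direction, your embedding device---adjoining a point $\infty$ to a parallel class and invoking the equality case of Theorem~\ref{deBrijnErdos}---is valid, and your exclusion of the near-pencil outcome for $\Gamma^{+}$ is correct. But you yourself name the genuine gap: proving that every $\FLS$ with $\ell=n+1$ admits a parallel class unless it is the $2$-punctured Fano plane. This is the substantive content of the theorem, and your sketch (``a convexity/deficiency analysis \dots should force the sizes to take only two consecutive values'') does not carry it out; in particular you have not shown that the short lines are pairwise disjoint, nor that they cover $P$, nor have you isolated why $(n,\ell)=(5,6)$ is the sole exception. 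As a plan your proposal correctly locates the difficulty; as a proof it stops exactly where the real work begins.
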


\section{The types $\tau_{sd}(K_n)$,
$\tau_{sa}(K_n)$ and
$\tau_{sdu}(K_n)$}
\label{intersectabilityofK_n}

In this section, we investigate the types of $F_\mathrm{sd}(K_n)$,
$F_\mathrm{sa}(K_n)$ and $F_\mathrm{sdu}(K_n)$. In \cite{guo10},
Guo, Wang and Wang proved the following theorem.

\begin{theorem}
\label{SDomega K_n}
For $n\geq 1$, $\theta_{sd}(K_n)=n$ and for $n \geq 3$,
$\tau_{sd}(K_n)=2+N_{\PP}(n,r)$.
\end{theorem}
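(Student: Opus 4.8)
The plan is to prove the two claims of Theorem~\ref{SDomega K_n} separately, leveraging the $\EGP$-bijection and the De~Bruijn--Erd\H{o}s machinery established in Section~\ref{Preliminaries}. For the equality $\theta_{sd}(K_n)=n$, I would argue that a minimum sd-set representation corresponds, under the $\EGP$-bijection, to a minimum edge-clique \emph{partition} of $K_n$ (simplicity forces the cover to be a partition, as shown in Section~\ref{simple represenation}). By Corollary~\ref{deBr}, any nontrivial edge-clique partition with no trivial clique has size at least $n$; I would handle the trivial-clique case by observing that the silly partition $\SP(K_n)$ attains exactly $n$ cliques, giving $\theta_{sd}(K_n)\leq n$. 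The matching lower bound $\theta_{sd}(K_n)\geq n$ follows because the universe size $|U(\mathcal{S})|$ equals the number of cliques $|\mathcal{Q}|$ in the $\EGP$-cover, and we must separate all $\binom{n}{2}$ edges while remaining simple and distinct. The small cases $n=1,2$ are checked directly.

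For the type count $\tau_{sd}(K_n)=2+N_{\PP}(n,r)$, the strategy is to enumerate, up to isomorphism, all edge-clique partitions $\mathcal{Q}$ of $K_n$ with $|\mathcal{Q}|=n$ whose $\EGP$-images lie in $F_{sd}(K_n)$, and then show that the $\EGP$-bijection preserves and reflects the isomorphism relation on the two sides. Here the key input is Corollary~\ref{deBr}: an edge-clique partition of size exactly $n$ with no trivial clique is either a near-pencil ($\NP$) or comes from a finite projective plane ($\PP$). I would first establish that nonisomorphic edge-clique partitions yield nonisomorphic set representations and conversely, so that counting reduces to counting partitions. The near-pencil $\NP(K_n)$ contributes exactly one isomorphism class, and the $\PP$-type partitions contribute exactly $N_{\PP}(n,r)$ classes (one per nonisomorphic projective plane of order $r$ with $n=r^2+r+1$), since isomorphic planes give isomorphic partitions and vice versa. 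The silly partition $\SP(K_n)$, which contains a trivial clique, contributes one further class not captured by Corollary~\ref{deBr}; this accounts for the additive constant. Summing, $1+1+N_{\PP}(n,r)=2+N_{\PP}(n,r)$.

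The main obstacle I anticipate is the careful verification that all three families—$\SP$, $\NP$, and each $\PP$—are genuinely pairwise nonisomorphic as set representations, and that no two distinct projective planes collapse to the same isomorphism class while no single plane splits into several. The delicate point is that isomorphism of set representations is defined via a bijection on the universe $U(\mathcal{S})$, whereas isomorphism of projective planes is defined via incidence-preserving bijections on points and lines; I would need to show these two notions correspond exactly under $\EGP$. Concretely, a set-representation isomorphism induces a relabeling of the universe elements (the $Q_j$'s, i.e.\ the lines) and must simultaneously respect which vertices (points) lie on which lines, so it descends to a plane isomorphism, and conversely. The $\SP$ class is distinguished from the others by the presence of a universe element lying in all $n$ sets (equivalently, a clique of size $n$), which neither $\NP$ nor any $\PP$ possesses; and $\NP$ is distinguished from every $\PP$ because its clique-size multiset is $(n-1,2,2,\ldots,2)$, which fails the uniformity $n=l=r^2+r+1$ of Theorem~\ref{project plane}. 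Once these invariants are in place the count is forced, but making the correspondence between the two isomorphism relations airtight is where the real work lies.
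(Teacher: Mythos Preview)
The paper does not give its own proof of Theorem~\ref{SDomega K_n}; it is quoted from~\cite{guo10}, and the only in-paper justification is Remark~\ref{remark SD}, which asserts that every member of $F_{sd}(K_n)$ for $n\geq 3$ is one of $\EGP_s(\SP(K_n))$, $\EGP_s(\NP(K_n))$, or $\EGP_s(\PP(K_n))$. Your plan is exactly the argument that substantiates that remark, so in spirit you are reconstructing the cited proof, and the invariants you propose (presence of a universal element for $\SP$; clique-size multiset for $\NP$ versus $\PP$; the incidence-preserving correspondence between set-representation isomorphism and plane isomorphism) are the right ones.

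One place to tighten: your lower bound $\theta_{sd}(K_n)\geq n$ appeals to Corollary~\ref{deBr}, but that corollary applies only to edge-clique partitions with \emph{no} trivial clique, whereas the $\EGP$-image $\mathcal{Q}$ of an arbitrary $\mathcal{S}\in F_{sd}(K_n)$ may well contain trivial cliques (indeed $\SP(K_n)$ does). The clean fix is to strip the monopolists first, obtaining $\mathcal{Q}'$ with no trivial cliques. If $|\mathcal{Q}'|=1$ then $\mathcal{Q}'=\{K_n\}$, every $S_i$ contains the common element, simplicity makes the residual sets $S_i\setminus\{Q_1\}$ pairwise disjoint, and distinctness forces at least $n-1$ further (monopolist) elements, so $|U(\mathcal{S})|\geq n$. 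If $|\mathcal{Q}'|>1$, Corollary~\ref{deBr} gives $|\mathcal{Q}'|\geq n$ directly. This same dichotomy is also what makes your classification for $\tau_{sd}$ exhaustive: if any trivial clique is present and $|\mathcal{Q}|=n$, the first branch forces the $\SP$ type up to isomorphism, while the second branch (no trivial cliques) is precisely the $\NP$/$\PP$ alternative of Corollary~\ref{deBr}. You leave this step implicit; making it explicit closes the only real gap in your outline.
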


\begin{remark}
\label{remark SD} In Theorem~\ref{SDomega K_n}, the equation
$\tau_{sd}(K_n)=2+N_{\PP}(n,r)$ is due to the fact that every member
in $F_{sd}(K_n)$ for $n\geq 3$ is either $\EGP_{s}(\NP(K_n))$ or
$\EGP_{s}(\PP(K_n))$ or $\EGP_{s}(\SP(K_n))$.
\end{remark}

\bigskip

In Theorems~\ref{SAomega K_n} and%
~\ref{SDUomega K_n} we derive the types $\tau_{sa}(K_n)$ and
$\tau_{sdu}(K_n)$.

\begin{theorem}
\label{SAomega K_n} For $n\geq 3$, $\theta_{sa}(K_n)=n$ and
$\tau_{sa}(K_n)=1+N_\mathrm{PP}(n,r)$.
\end{theorem}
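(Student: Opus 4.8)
The plan is to reduce everything to Theorem~\ref{SDomega K_n} and Remark~\ref{remark SD} by exploiting the fact that the antichain property is strictly stronger than the distinct property. First I would record the elementary observation that every antichain set representation is automatically distinct: if $S_i\subseteq S_j$ fails for all $i\neq j$, then in particular $S_i\neq S_j$. Consequently any simple-antichain representation is a simple-distinct representation with the same universe, so $\theta_{sa}(K_n)\geq\theta_{sd}(K_n)=n$. For the matching upper bound I would exhibit a concrete simple-antichain representation on $n$ elements, taking $\EGP_s(\PP(K_n))$ when $n=r^2+r+1$ and $\EGP_s(\NP(K_n))$ in general; both lie in $F_{sd}(K_n)$ by the lemma, and I verify directly that neither contains one set inside another. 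This gives $\theta_{sa}(K_n)=n$.

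For the type count I would argue that $F_{sa}(K_n)\subseteq F_{sd}(K_n)$: a minimum simple-antichain representation has universe size $\theta_{sa}(K_n)=n=\theta_{sd}(K_n)$ and, being simple and distinct, it belongs to $F_{sd}(K_n)$. By Remark~\ref{remark SD} every member of $F_{sd}(K_n)$ is isomorphic to one of $\EGP_s(\NP(K_n))$, $\EGP_s(\PP(K_n))$, or $\EGP_s(\SP(K_n))$. It therefore suffices to decide which of these three families is an antichain and to count the resulting isomorphism classes.

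The decisive step is the case analysis on the three canonical representations. The silly partition is ruled out immediately: in $\EGP_s(\SP(K_n))$ one has $S_1=\{Q_1\}$ while every other $S_i=\{Q_1,Q_i\}$, so $S_1\subsetneq S_i$ and the family is not an antichain. By contrast $\EGP_s(\PP(K_n))$ is uniform, with all sets of size $r+1$, so distinct equal-sized sets cannot nest and it is an antichain. For $\EGP_s(\NP(K_n))$ I would check that the $n-1$ sets $\{Q_1,Q_{i+1}\}$ of size two all share $Q_1$ and differ elsewhere, hence form an antichain among themselves, while the remaining set $\{Q_2,\dots,Q_n\}$ omits $Q_1$ and so contains none of them and is contained in none of them for $n\geq 3$. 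Thus exactly the near-pencil and projective-plane families survive.

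Finally I would tally the isomorphism classes. The near-pencil contributes a single class, and the projective planes contribute one class for each of the $N_\mathrm{PP}(n,r)$ non-isomorphic planes on $n$ points; that these classes are mutually non-isomorphic, and distinct from the near-pencil class, is exactly the distinction already established in Theorem~\ref{SDomega K_n}. Hence $\tau_{sa}(K_n)=1+N_\mathrm{PP}(n,r)$. The only genuinely delicate point is the antichain verification for the near-pencil, since its sets are not uniform; care is needed to confirm that the unique large set neither contains nor is contained in any of the two-element sets for all $n\geq 3$.
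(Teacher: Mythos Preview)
Your proposal is correct and follows essentially the same approach as the paper: both arguments use the inclusion of simple-antichain representations among simple-distinct ones to get $\theta_{sa}(K_n)\geq n$, exhibit $\EGP_s(\NP(K_n))$ as a witness for equality, and then invoke Remark~\ref{remark SD} to reduce the type count to checking which of the three canonical $F_{sd}(K_n)$ families are antichains. Your write-up is in fact more explicit than the paper's on the antichain verifications for $\NP$ and $\SP$, but the logical skeleton is identical.
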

\begin{proof}
Assume that $n \geq 3$. By Theorem~\ref{SDomega K_n},
$\theta_{sd}(K_n)=n$. Notice that $F_{sa}(K_n)\subset F_{sd}(K_n)$
implies that $\theta_\mathrm{sa}(K_n)\geq \theta_{sd}(K_n)=n$.
Clearly, no set representation of $\EGP_s(\NP(K_n))$ is contained in
another one. Thus $\EGP_{s}(\NP(K_n))\subseteq F_{sa}(K_n)$. This
implies that $\theta_{sa}(K_n) \leq n$. As a consequence,
$\theta_{sa}(K_n)=n$.

Since $\theta_{sa}(K_n)=n$, by Remark~\ref{remark SD}, there are
only three kinds of set representations in $F_{sa}(K_n)$. It is
quickly verified that $\EGP_{s}(\NP(K_n))$ and $\EGP_{s}(\PP(K_n))$
are in $F_{sa}(K_n)$ but $\EGP_{s}(\SP(K_n))$ is not. Thus the
theorem follows. \qed\end{proof}

\begin{definition}
\label{df monopolist}
Let $\mathcal{S}\in F(G)$. An element of
$U(\mathcal{S})$ is a {\em monopolist\/} if it appears in
exactly one set of $\mathcal{S}$.
\end{definition}

\begin{theorem}
\label{SDUomega K_n}
\[\theta_\mathrm{sdu}(K_n)=
\begin{cases}
n & \text{if $n=3$,}\\
n & \text{if $n\geq 4$, $n=r^2+r+1$ and $N_{PP}(n,r)\neq 0$,}\\
n+1 & \mbox{otherwise.}
\end{cases}\]
\[\tau_\mathrm{sdu}(K_n)=
\begin{cases}
1 & \text{if $n=3$,}\\
N_{PP}(n,r) & \text{if $n\geq 4$ and $\theta_{sdu}(K_n)=n$,}\\
1+N_{PP}(n+1,r) & \text{if $n\geq 4$ and
$\theta_{sdu}(K_n)=n+1$.}
\end{cases}\]
\end{theorem}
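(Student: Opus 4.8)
The plan is to intersect the known classification of minimum simple--distinct representations with the uniformity constraint. Since every sdu-representation is in particular an sd-representation, $\theta_{sdu}(K_n)\geq\theta_{sd}(K_n)=n$ by Theorem~\ref{SDomega K_n}, so I first test the three representatives of $F_{sd}(K_n)$ from Remark~\ref{remark SD} for uniformity. Under the $\EGP$-bijection, $\EGP_s(\PP(K_n))$ assigns to a vertex the set of lines through it, which by Theorem~\ref{project plane} always has size $r+1$; hence it is uniform. By contrast $\EGP_s(\SP(K_n))$ contains sets of sizes $1$ and $2$, and $\EGP_s(\NP(K_n))$ contains $n-1$ sets of size $2$ together with one set of size $n-1$, so for $n\geq 4$ neither is uniform. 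This disposes of the first two branches at once: for $n=3$ the near-pencil representation is itself uniform (its three sets all have size $2$) and is the only uniform minimum representation, giving $\theta_{sdu}=3$ and $\tau_{sdu}=1$; and whenever a $\PP$ of order $r$ on $n$ points exists we get $\theta_{sdu}(K_n)=n$ with the uniform minimum representations being exactly the $N_{PP}(n,r)$ plane representations, so $\tau_{sdu}(K_n)=N_{PP}(n,r)$.

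For the remaining case I must show $\theta_{sdu}(K_n)=n+1$ and then count. The lower bound is immediate: by Remark~\ref{remark SD} the only simple--distinct representations with a universe of size $n$ are the three above, and none is uniform when no $\PP$ of order $r$ lives on $n$ points. For the upper bound I use a construction valid for every $n$: let $\mathcal{Q}=\{Q_0,Q_1,\ldots,Q_n\}$ with $Q_0=V(K_n)$ and $Q_i=\{v_i\}$ for $i\in[n]$. This is an edge-clique partition, and its $\EGP$-image is $S_{v_i}=\{Q_0,Q_i\}$; all sets have size $2$, they are pairwise distinct, and they pairwise meet only in $Q_0$. Hence it is an sdu-representation with universe $n+1$, proving $\theta_{sdu}(K_n)=n+1$.

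It remains to enumerate, up to isomorphism, the sdu-representations with universe of size $n+1$; equivalently, the edge-clique partitions $\mathcal{Q}$ of $K_n$ with $|\mathcal{Q}|=n+1$ whose $\EGP$-image is uniform and distinct. I organize the argument by the number of nontrivial cliques of $\mathcal{Q}$. If there is exactly one nontrivial clique, it must equal $K_n$ (to cover all edges), the remaining $n$ cliques are monopolists, and a count forces exactly one monopolist per vertex; this is precisely the construction above, and it is the unique such representation. If there are at least two nontrivial cliques, then by Corollary~\ref{deBr} there are at least $n$ of them, leaving at most one trivial clique. A single trivial clique would make the nontrivial part an $n$-clique partition of $K_n$, hence a near-pencil (a $\PP$ being unavailable), whose degree sequence $(n-1,2,\ldots,2)$ cannot be equalized by adjoining one monopolist when $n\geq 4$; this subcase is empty. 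With no trivial clique, $\mathcal{Q}$ is a finite linear space with $n$ points and $n+1$ lines, so Theorem~\ref{bridges} leaves only a $1$-punctured $\PP$ or the $2$-punctured Fano plane. The latter is the representation $\EGP_s(\PP_2(K_5))$ exhibited earlier, whose sets have sizes $2$ and $3$, so it is not uniform and is discarded; a $1$-punctured $\PP$ of order $r$ keeps all $r+1$ lines through each surviving point and is therefore uniform.

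The crux is then to count the $1$-punctured $\PP$'s up to isomorphism. Observe that such a representation is obtained from $\EGP_s(\PP(K_{n+1}))$ by deleting the single set attached to the punctured point, where $n+1=r^2+r+1$. From the resulting finite linear space the ambient plane is recovered canonically---re-adjoin one new point to exactly the $r+1$ smallest lines---so non-isomorphic planes yield non-isomorphic representations, giving the lower bound $N_{PP}(n+1,r)$ on the count. The matching upper bound, namely that deleting any point of a fixed plane yields the same representation up to isomorphism, is the main obstacle: it amounts to the point-homogeneity of the planes in question, and this is the one step I expect to require genuine care, as opposed to the purely combinatorial reductions above. Granting it, the universe-$(n+1)$ sdu-representations consist of the $N_{PP}(n+1,r)$ punctured-plane representations together with the single representation from the one-nontrivial-clique case, which is distinct from all of them because its uniform set-size is $2<r+1$. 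Hence $\tau_{sdu}(K_n)=1+N_{PP}(n+1,r)$, as claimed.
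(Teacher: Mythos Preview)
Your approach is essentially the paper's: both bound $\theta_{sdu}$ between $n$ and $n+1$ via the same construction, test the three $F_{sd}$ representatives of Remark~\ref{remark SD} for uniformity, and for the $n+1$ case reduce via Corollary~\ref{deBr} and Theorem~\ref{bridges} to the dichotomy ``one nontrivial clique'' versus ``$1$-punctured $\PP$.'' Your organization by the number of nontrivial cliques is equivalent to the paper's device of deleting monopolists and analyzing $\EGP(\mathcal{S}')$; in fact your treatment of the subcase ``$\NP$ plus one trivial clique'' via the degree sequence is more explicit than the paper's one-line dismissal of the $\NP$/$\PP$ cases.

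The point you flag as the main obstacle---that puncturing a fixed projective plane at any point yields the same set representation up to isomorphism, i.e.\ point-transitivity---is a genuine subtlety, and you are right that it is exactly what is needed to get the count $N_{\PP}(n+1,r)$ rather than a sum over point-orbits. The paper's proof does not address this step at all: it simply asserts that the two surviving cases give the stated count and stops. So your proposal matches the paper's argument and is, if anything, more scrupulous in isolating where the combinatorics ends and an external fact about projective planes is being invoked.
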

\begin{proof}
We claim that $n\leq \theta_{sdu}(K_n)\leq n+1$. By
Theorem~\ref{SDomega K_n}, since $F_{sdu}(K_n)\subset F_{sd}(K_n)$,
we have $\theta_{sdu}(K_n)\geq n$. To prove that
$\theta_{sdu}(K_n)\leq n+1$, consider the following set
representation. Let $\mathcal{S}=\{S_1,\ldots, S_n\}$ where
$S_i=\{s_1,s_{i+1}\}$ for $1\leq i\leq n$. Clearly, $\mathcal{S}$ is
simple, distinct and uniform. Also, $|U(\mathcal{S})|=n+1$. This
implies that $\theta_{sdu}(K_n)\leqslant n+1$. This proves the
claim.

\medskip

We first analyze the cases where $\theta_{sdu}(K_n)=n$ for some
$n\geq 3$. Notice that $\EGP_s(\SP(K_n))$ is not uniform for $n \geq
3$. We also have that $\EGP_s(\NP(K_n))$ is in $F_{sdu}(K_n)$ only
for $n=3$. In a $\PP$ of order $r$, each point $\PP$ is on exactly
$r+1$ lines. This implies that the set representation of the
projective plane is in $F_{sdu}(K_n)$. Thus $F_{sdu}(K_n)$ contains
$\EGP_{s}(\NP(K_3))$ (for $n=3$) and $\EGP_{s}(\PP(K_n))$ for
$n=r^2+r+1$ ($n\geq 7$) and $N_{\PP}(n,r)\neq 0$. This shows that,
if $n\geq 4$ and there does not exist a $\PP$ of order $r$ with
$n=r^2+r+1$, then $\theta_{sdu}(K_n)=n+1$. Conversely, if $n=3$ or,
$n=r^2+r+1$ and $N_{\PP}(n,r)\ne 0$, then
\[\theta_{sdu}(K_n)=n \quad\text{and}\quad
\tau_{sdu}(K_n)=
\begin{cases}
1 & \quad\text{if $n=3$} \\
N_{\PP}(n,r)& \quad\text{otherwise.}
\end{cases}\]

\medskip

\noindent Let $\mathcal{S} \in F_{sdu}(K_n)$, $n\geq 4$, and assume
that $|U(\mathcal{S})|=n+1$.  We claim that $\mathcal{S}$ is either
$\EGP_{s}(\PP_1(K_n))$ or $\mathcal{S}=\{S_1,S_2,\ldots,S_n\}$ with
$S_i=\{s_1,s_{i+1}\}$ for $i \in [n]$. Delete all monopolists from
the sets of $\mathcal{S}$ and let $\mathcal{S}'$ be the result.
Clearly, $\mathcal{S}'\in F_{s}(K_n)$. Furthermore,
$\EGP(\mathcal{S}')$ is an edge-clique partition of $K_n$ which
contains at most $n+1$ cliques and no trivial ones. By
Corollary~\ref{deBr} and Theorem~\ref{bridges}, either
$|\EGP(\mathcal{S}')|=1$ or $\EGP(\mathcal{S}')$ is an $\NP$, a
$\PP$, a $1$-punctured $\PP$, or a $2$-punctured Fano Plane. If
$\EGP(\mathcal{S}')$ is an $\NP$ or $\PP$, then $\mathcal{S}$ is
either an $\EGP_{s}(\NP(K_n))$ or an $\EGP_{s}(\PP(K_n))$ and then
$U(\mathcal{S}) \neq n+1$. If $\EGP(\mathcal{S}')$ is an
$\EGP_{s}(\PP_2(K_n))$ (see Figure~\ref{FLS with 5 points 6 lines}
for an illustration), then $\mathcal{S}'=\mathcal{S}$ and
$\mathcal{S}$ is not uniform. Thus either
$|\mathrm{EGP}(\mathcal{S})|=1$ or $\EGP(\mathcal{S})$ is an
$\EGP_{s}(\PP_1(K_n))$. This concludes the proof of this theorem.
\qed\end{proof}

\section{The type $\tau_{sd}(G^*)$}
\label{thetaF_sd}

In the rest of this paper, let $G$ be a connected graph and
$\mathcal{P}$ an edge-clique partition of $G^*$. Our results on
$\tau_{sd}(G^*)$, $\tau_{sa}(G^*)$ and $\tau_{sdu}(G^*)$ are based
on the results in \cite{mcgu90}. Hence, we follow most of the terms
used in \cite{mcgu90}. However, for ease of readability, we repeat
some of them as follows.

\bigskip

If $uv \in E(G)$, then we use $uv$ to denote the corresponding
vertex in $G^*$. An edge in $G^*$ with endpoints $vu$ and $vw$ is
denoted by $(vu,vw)$ and a $k$-clique in $G^*$ containing vertices
$vu_1,vu_2,\ldots ,vu_k$ is denoted by $\{vu_1,vu_2,\ldots ,vu_k\}$.
In this case, we also say that the $k$-clique in $G^*$ is induced by
the edges $vu_1,vu_2,\ldots ,vu_k$ in $G$. We also use $uvw$ to
denote a triangle when $u$, $v$ and $w$ are the vertices in the
$K_3$.

\begin{definition}
\label{df v-stars} A \textit{v-star} in $G$ is a subgraph of $G$
consisting of a set of edges incident with a common vertex $v$
$($see Figure~\ref{wingsfigure}$(a))$. Denote by $S_v^i$ a star
consisting of $i$ edges incident with $v$. A $v$-star $S_v^i$ is
{\em saturated\/} $($respectively, a {\em trivial\/}$)$ if $i=d(v)$
$($respectively, $i=1)$. A {\em $v$-wing\/} in $G$ is a $K_3$ in
which only $d_G(v)>2$, and we call $v$ the {\em stalk vertex\/} of
the wing  $($see Figure~\ref{wingsfigure}$(b))$. A {\em $3$-wing} is
a $v$-wing with $d(v)=3$. A semiwing is a $K_3$ with exactly one
vertex of degree 2 in $G$ $($see Figure~\ref{wingsfigure}$(c))$. In
a semiwing, the {\em stalk vertices\/} are the vertices with degree
greater than 2. Let $w_v(\mathcal{P})$ denote the number of
3-cliques in $\mathcal{P}$ which are induced by $v$-wings. Let
$V_\mathrm{3w}$ be the set of stalk vertices of the 3-wings in $G$.
\end{definition}

\begin{figure}[htb]
\begin{center}
\subfigure[a $v$-star]{
\includegraphics[scale=0.3]{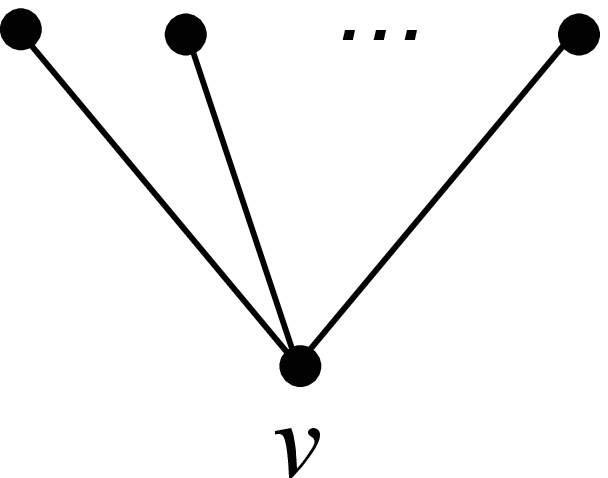}
} \quad \subfigure[two $v$-wings]{
\includegraphics[scale=0.3]{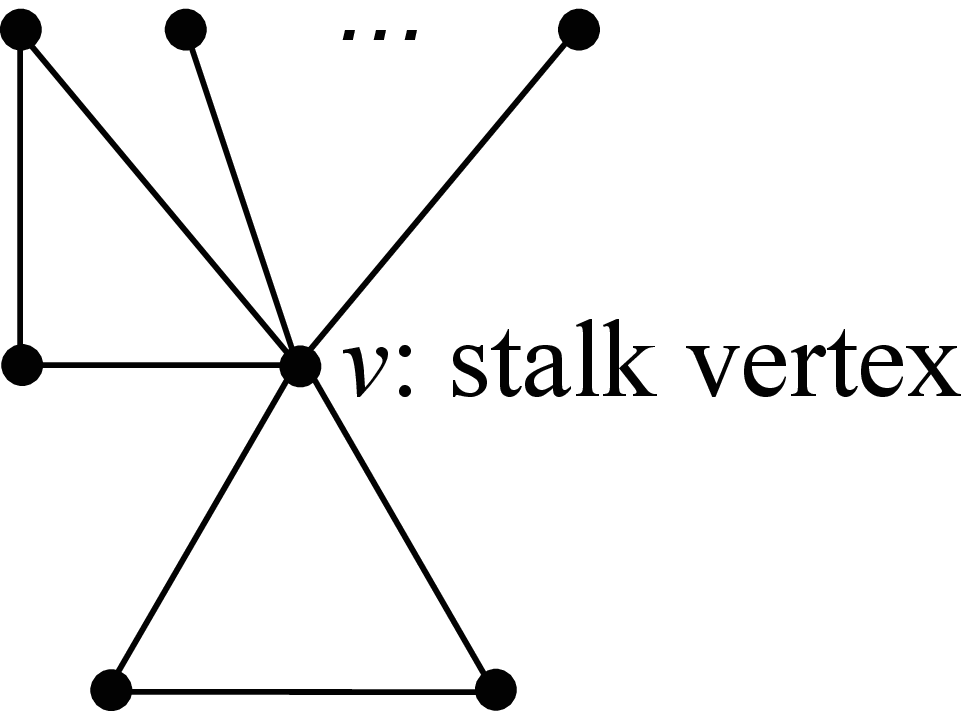}
} \quad \subfigure[a semiwing]{
\includegraphics[scale=0.3]{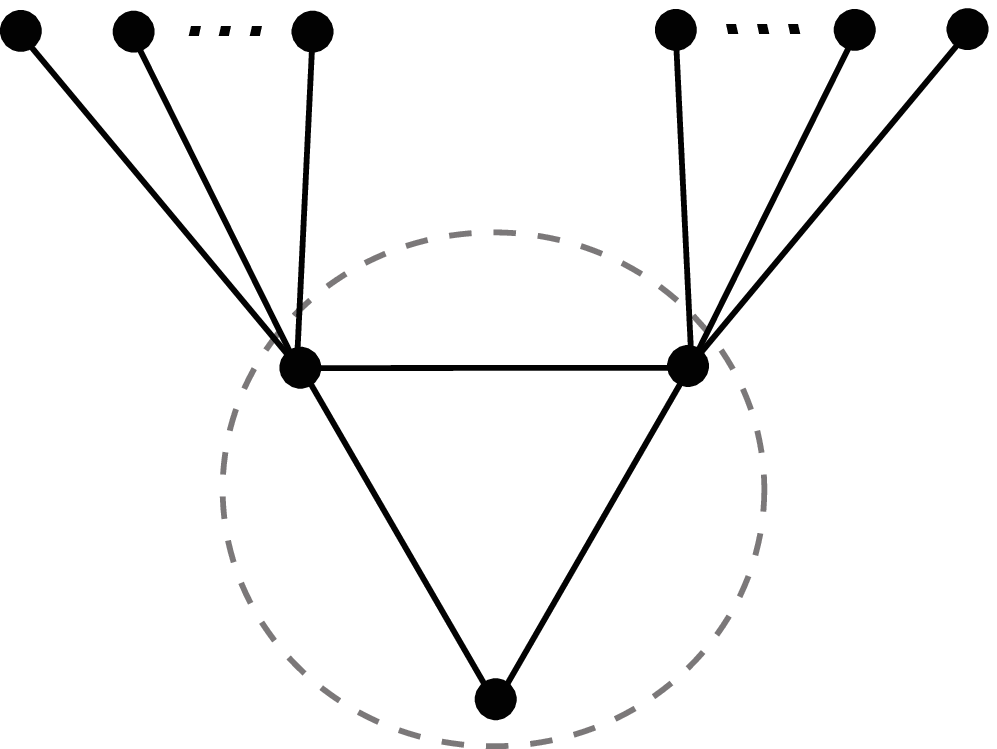}
} \quad \subfigure[$W_t$]{
\includegraphics[scale=0.3]{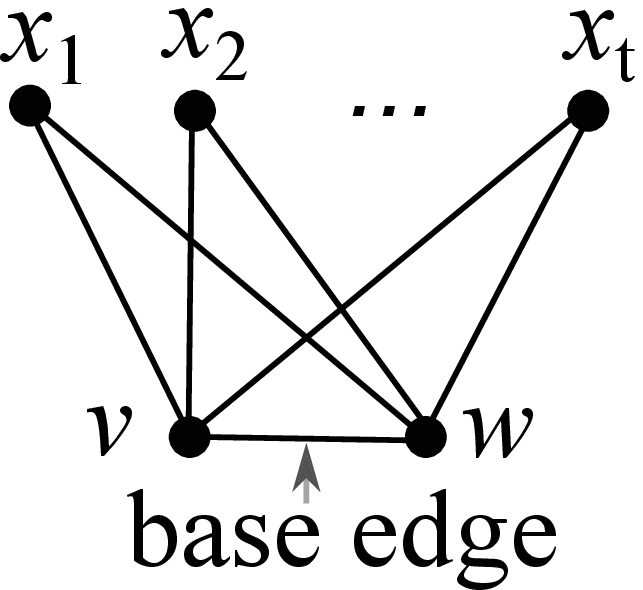}
}
\caption{Stars, wings, semiwings and $W_t$.}
\label{wingsfigure}
\end{center}
\end{figure}

The \textit{join} of graphs $G$ and $H$, denoted by $G\vee H$, is
the graph $G\vee H=(V,E)$ where $V(G\vee H)=V(G) \cup V(H)$ and
$E(G\vee H)=E(G) \cup E(H) \cup \{gh: g \in V(G)$  and $h \in
V(H)\}$. For $t\geq 2$, let $W_t=tK_1\vee K_2$ (see
Figure~\ref{wingsfigure}(d)). The {\em base edge} of $W_t$ is the
edge of which both endpoints are of degree greater than 2.

\begin{definition}
\label{df peacocks} A {\em plume} in $G$ is a vertex of degree 1. A
graph is a {\em one-tail peacock graph} if it is composed of a $K_3$
in which exactly one vertex is adjacent to $t$ plumes and the other
two vertices are of degree 2, where $t$ is a positive integer $($see
Figure~\ref{peacockgraphs}$(a))$. A graph is a {\em two-tail peacock
graph} if it composes of a $K_3$ in which exactly two vertices have
plumes as neighbors $($see Figure~\ref{peacockgraphs}$(b))$. A graph
is a {\em diamond-back one-tail peacock graph} $($respectively, {\em
diamond-back two-tail peacock graph}$)$ if it composes of a $W_t$ in
which exactly one endpoint $($respectively, both endpoints$)$ of the
base edge has plumes as neighbors $($see
Figures~\ref{peacockgraphs}(c) and ~\ref{peacockgraphs}$(d))$.
\end{definition}

For brevity, one-tail peacocks, two-tail peacocks, diamond-back
one-tail peacocks and diamond-back two-tail peacocks are abbreviated
as $1t$-peacocks, $2t$-peacocks, $d1t$-peacocks and $d2t$-peacocks,
respectively, and are denoted by TP$_1$, TP$_2$, TP$_{d1}$ and
TP$_{d2}$, respectively. We use `peacock' as a generic term for
either one of the peacock species mentioned above and denote it by
TP.

\begin{figure}[htb]
\begin{center}
\subfigure[1t-peacock]{
\includegraphics[scale=0.34]{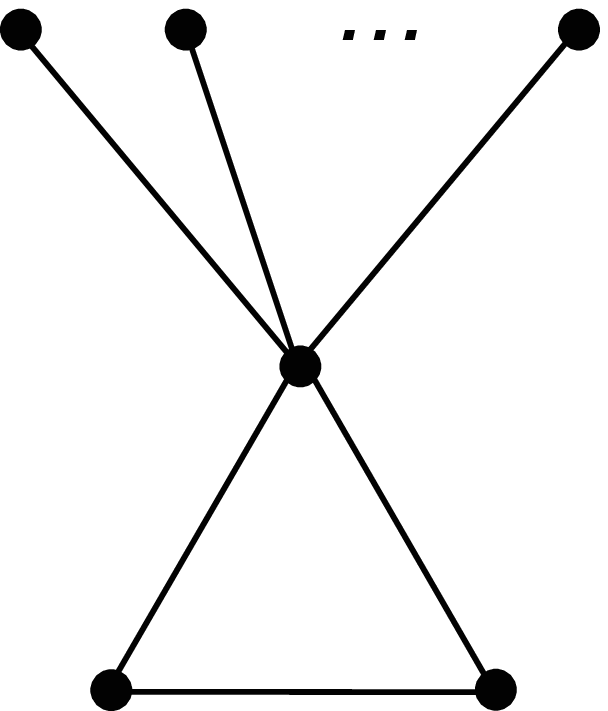}
} \quad \subfigure[2t-peacock]{
\includegraphics[scale=0.34]{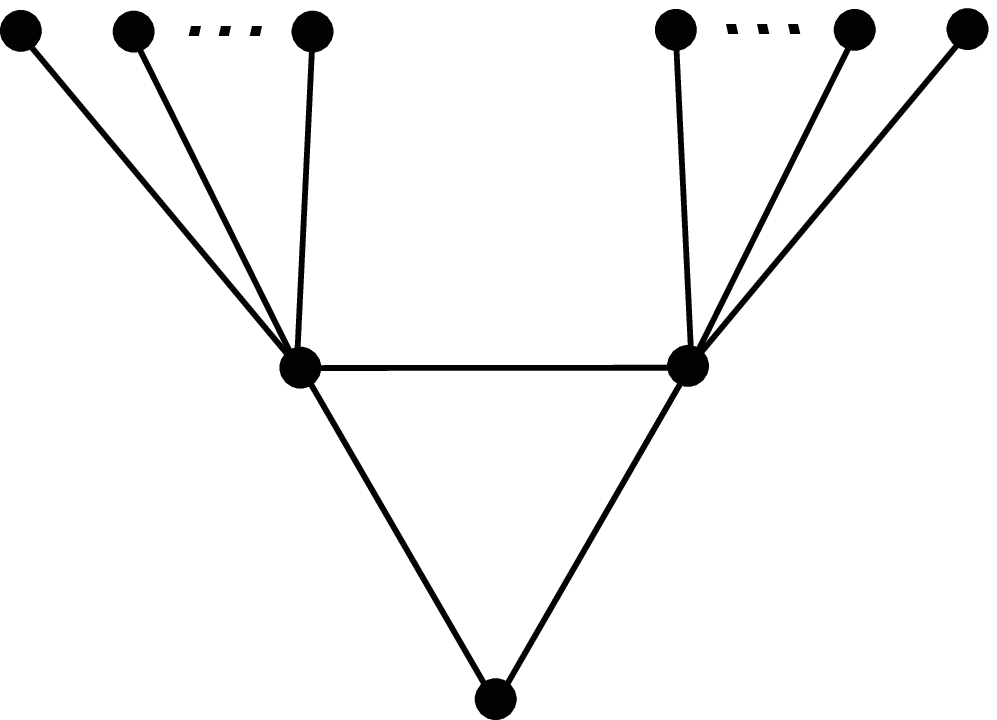}
} \quad \subfigure[d1t-peacock]{
\includegraphics[scale=0.34]{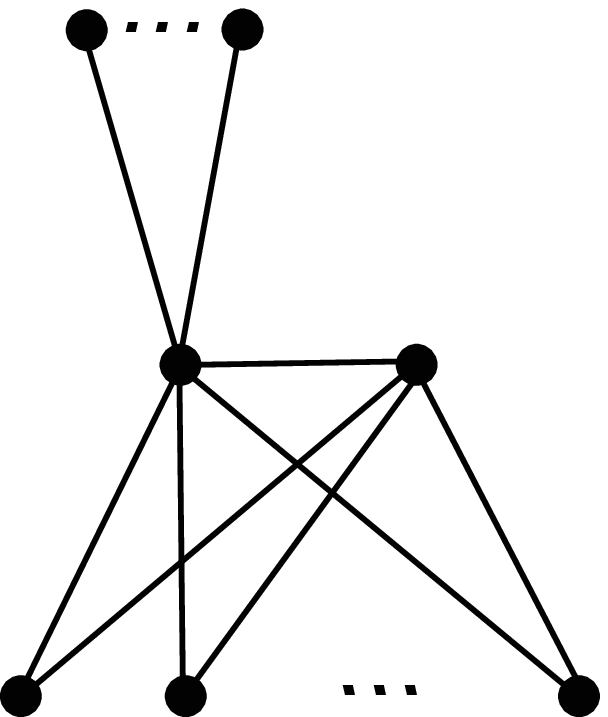}
} \quad \subfigure[d2t-peacock]{
\includegraphics[scale=0.34]{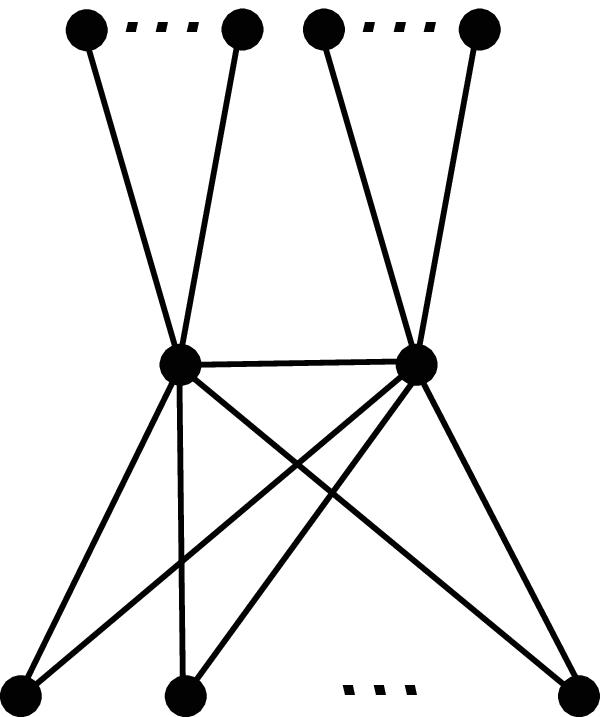}
}
\caption{tailed peacock graphs.}
\label{peacockgraphs}
\end{center}
\end{figure}

Recall Whitney's theorem~\cite{kn:whitney}.
\begin{theorem}
\label{StarOrTriangle}
Any clique in $G^*$ is induced either by a star or a $K_3$ in $G$.
\end{theorem}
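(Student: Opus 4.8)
The plan is to translate the statement into a claim about edges of $G$ and then settle it with a small Helly-type argument. By definition of the line graph, a clique in $G^*$ corresponds to a family $\mathcal{F}=\{f_1,\ldots,f_k\}$ of distinct edges of $G$ that pairwise intersect, i.e. any two of them share an endpoint. What must be shown is that either all members of $\mathcal{F}$ are incident with one common vertex, so that the clique is induced by a star, or else $\mathcal{F}$ consists of exactly three edges forming a $K_3$. The cases $k\leq 2$ are immediate, since a single edge is a trivial star and two intersecting edges form a $2$-edge star; so I would assume $k\geq 3$ from here on.

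First I would dispose of the easy alternative. If there is a vertex $v$ lying on every edge of $\mathcal{F}$, then $\mathcal{F}$ is a set of edges all incident with $v$, which is precisely a star, and there is nothing more to prove. Hence I would henceforth assume that no single vertex lies on all edges of $\mathcal{F}$ and aim to force a triangle.

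The heart of the argument is a three-edge observation. Fix two edges $f_1,f_2\in\mathcal{F}$; being adjacent in $G^*$ they share an endpoint, so write $f_1=\{v,a\}$ and $f_2=\{v,b\}$ with $a\neq b$ (here I use that $G$ is simple, so an edge is determined by its unordered pair of endpoints). Since no vertex lies on all of $\mathcal{F}$, some edge $f_3\in\mathcal{F}$ must avoid $v$. As $f_3$ still has to meet both $f_1$ and $f_2$, its two endpoints are forced to be exactly $a$ and $b$, whence $f_3=\{a,b\}$ and $\{f_1,f_2,f_3\}$ induces the triangle $vab$ in $G$.

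Finally I would rule out any further growth of the clique. Any additional edge $g\in\mathcal{F}$ must meet each of $\{v,a\}$, $\{v,b\}$ and $\{a,b\}$. If $g$ had an endpoint outside $\{v,a,b\}$, then it would touch $\{v,a,b\}$ in at most one vertex, and a single vertex of the triangle lies on exactly two of its three sides, so $g$ would miss the opposite side—contradicting that $g$ meets all three. Hence $g\subseteq\{v,a,b\}$, and the only edges of $G$ inside $\{v,a,b\}$ are $f_1,f_2,f_3$ themselves. Therefore $\mathcal{F}=\{f_1,f_2,f_3\}$, and the clique is induced by a $K_3$. I expect the only delicate point to be this last step—verifying that no fourth edge survives—but it reduces to the pigeonhole fact that an edge has only two endpoints and so cannot hit all three sides of a triangle unless it is itself one of those sides.
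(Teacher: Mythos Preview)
Your argument is correct: the translation to pairwise intersecting edge families, the three-edge case analysis yielding the triangle $vab$, and the pigeonhole step excluding a fourth edge are all sound. The paper itself does not prove this statement at all; it simply records it as Whitney's theorem with a citation to~\cite{kn:whitney}. So rather than differing from the paper's proof, your write-up supplies the elementary justification that the paper elects to omit, and it is the standard one.
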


\begin{lemma}[Lemma 2.13 in \cite{mcgu90}]
\label{wings}
If $w_v(\mathcal{P})> 0$ for some $v\in V(G)$, then $\mathcal{P}$
contains at least $w_v(\mathcal{P})+1$ cliques induced by $v$-stars
in $G$, with equality occurring only when $w_v(\mathcal{P})=1$ and
$d(v)=3$, or $w_v(\mathcal{P})=3$ and $G=3K_2\vee K_1$.
\end{lemma}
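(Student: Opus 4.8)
The plan is to localize everything to the clique of $G^*$ formed by the edges of $G$ incident with $v$, and then apply the de Bruijn--Erd\H{o}s bound (Corollary~\ref{deBr}) together with the classification of near-minimal edge-clique partitions (Theorem~\ref{bridges}). Write $d=d(v)$ and $w=w_v(\mathcal{P})$. The edges of $G$ incident with $v$ form a clique $C_v\cong K_d$ in $G^*$. By Whitney's theorem (Theorem~\ref{StarOrTriangle}), every clique of $\mathcal{P}$ meeting $C_v$ in an edge is induced either by a $v$-star, in which case it covers a subclique of $C_v$, or by a triangle $\{v,a,b\}$ of $G$ with $ab\in E(G)$, in which case it covers the single edge $(va,vb)$ of $C_v$. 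Thus, restricted to $C_v$, the partition $\mathcal{P}$ splits into a family $\mathcal{A}$ of $v$-star cliques and a family of single-edge triangle cliques; setting $s=|\mathcal{A}|$, the goal becomes $s\geq w+1$.

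The first key step is a degree-$2$ observation. If $\{v,u,u'\}$ is a $v$-wing then $d(u)=d(u')=2$, so the only triangle of $G$ through the edge $vu$ is the wing itself. Hence every edge of $C_v$ incident with a wing-vertex, other than the $w$ wing-edges, must be covered by a $v$-star clique, and no clique of $\mathcal{A}$ may contain both endpoints of a wing-edge. Consequently the $w$ wing-edges form a matching $M$ on the $2w$ wing-vertices $W$, this matching is disjoint from $\mathcal{A}$, and all edges of $C_v$ incident with $W$ except those of $M$ lie in $\mathcal{A}$. Note that $d\geq 3$ whenever $w\geq 1$, by the definition of a wing.

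For the lower bound I would distinguish two cases. If $d>2w$, choose a non-wing vertex $c$ of $C_v$ and restrict $\mathcal{P}$ to $W\cup\{c\}\cong K_{2w+1}$. Every edge of this $K_{2w+1}$ other than those of $M$ is incident with $W$, hence lies in $\mathcal{A}$, so the nontrivial restrictions of the cliques of $\mathcal{A}$ together with $M$ form an edge-clique partition of $K_{2w+1}$ with no trivial clique; Corollary~\ref{deBr} gives at least $2w+1$ cliques, whence at least $w+1$ come from $\mathcal{A}$ and $s\geq w+1$. If $d=2w$, then all neighbours of $v$ are wing-vertices and $\mathcal{A}\cup M$ is itself an edge-clique partition of $K_{2w}$ with no trivial clique; Corollary~\ref{deBr} gives $s+w\geq 2w$, and $s=w$ would force $\mathcal{A}\cup M$ to be a near-pencil or a projective plane of $K_{2w}$ containing the matching $M$. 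Neither is possible: in a near-pencil the short cliques share a common point and cannot form a matching, while a projective plane is uniform with line-size $r+1=2$, forcing $2w=3$. Hence again $s\geq w+1$.

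The equality analysis is where the real work lies, and it is the step I expect to be the main obstacle. Equality $s=w+1$ forces equality in the de Bruijn--Erd\H{o}s bound, so the partitions above are extremal. When $d>2w$, the extremal partition of $K_{2w+1}$ must be a near-pencil or projective plane carrying the matching $M$, which by the same size argument forces $w=1$; a separate argument using a second non-wing vertex when $d\geq 4$ then shows $s\geq 3$ unless $d=3$, yielding the case $w=1$, $d(v)=3$. When $d=2w$, equality means $\mathcal{A}\cup M$ is an edge-clique partition of $K_{2w}$ with exactly $(2w)+1$ cliques; by Theorem~\ref{bridges} it is a $1$-punctured projective plane or the $2$-punctured Fano plane, and the requirement that the short lines form a perfect matching pins this down to the $1$-punctured Fano plane. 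Then $2w=6$, the four remaining lines give the four $v$-star cliques, and since $G$ is connected the whole graph must be $3K_2\vee K_1$, yielding the case $w=3$. The delicate points are verifying that the prescribed matching $M$ is compatible only with these punctured planes, and excluding the spurious near-equality configurations when $d\geq 4$.
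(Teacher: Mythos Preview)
The paper does not supply its own proof of this lemma; it is quoted verbatim from McGuinness and Rees~\cite{mcgu90} and used as a black box. So there is nothing in the paper to compare your argument against.

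That said, your approach is correct and natural. Localizing to the clique $C_v\cong K_d$ and applying Corollary~\ref{deBr} to the subclique on the $2w$ wing-vertices (augmented by one non-wing vertex when $d>2w$) is exactly the right move, and the degree-$2$ observation---that for a wing-vertex $u$ the only triangle of $G$ through $vu$ is the wing itself---is the key structural fact driving everything.

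Two places where your sketch could be tightened. First, in the case $d=2w$ you should record explicitly that $w\ge 2$ (since $d\ge 3$ forces $2w\ge 4$); this is what makes both the near-pencil and the projective-plane configurations for $s=w$ impossible, as a matching of size $\ge 2$ cannot sit inside either. Second, the ``separate argument'' for $w=1$, $d\ge 4$ can be made concrete in one line: if $s=2$ with star-cliques $A_1,A_2$, then necessarily $vu\in A_1\setminus A_2$ and $vu'\in A_2\setminus A_1$, and every non-wing vertex $c$ must lie in both $A_1$ and $A_2$ (to cover $(vu,c)$ and $(vu',c)$); two such vertices $c_1,c_2$ then make the edge $(c_1,c_2)$ doubly covered, a contradiction. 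For the $d=2w$ equality case, your use of Theorem~\ref{bridges} is correct: the $2$-punctured Fano plane has five points (odd), so only the $1$-punctured plane survives, and requiring $w$ pairwise disjoint lines of size $2$ forces $r=2$, hence $2w=6$ and the three short lines are precisely $M$; connectivity then gives $G=3K_2\vee K_1$.
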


McGuinness and Rees \cite{mcgu90} define a surjection
$f:\mathcal{P}\rightarrow V_2(G)$ for $G\neq K_3$, where
$V_2(G)=\{v\in V(G):d(v)\geq 2\}$. We describe the surjection
as follows (and we make some modifications). Assume that $G\neq
K_3$. The set $V_2(G)$ can be partitioned into the following three
subsets with respect to $\mathcal{P}$:
\begin{enumerate}[\rm (1)]
\item $R_\mathcal{P}=\{v\in V_2(G):\text{$d(v)\geq 3$ and
no clique in $\mathcal{P}$ is induced by $v$-stars}\}$,
\item $NW_\mathcal{P}
= \{v\in V_2(G)\setminus R_\mathcal{P}:\text{$v$ does not lies in a
wing of $G$}\}$, and
\item $W = \{v\in V_2(G):\text{$v$ lies in a wing of $G$}\}$.
\end{enumerate}

\noindent By definition, $NW_\mathcal{P}$ is disjoint with
$R_\mathcal{P}$ and $W$. Whitney's theorem and
Lemma~\ref{wings} imply that $R_\mathcal{P}$ and $W$
are disjoint.

\begin{example}
Consider the graph $G$ in Figure~\ref{mapping}(a). It has 9 vertices
and 13 edges. The vertex set $V$ is $\{a,b,\ldots,i\}$ and the edges
are labeled by $1, \ldots, 13$. Figure~\ref{mapping}(b) shows the
linegraph $G^*$ of $G$. Figure~\ref{mapping}(c) is an edge-clique
partition $\mathcal{P}$ of $G^*$, where $\mathcal{P}=\{Q_1,\ldots,
Q_{17}\}$ and $Q_1=\{1,2\}$, $Q_2=\{1,5,7,8,9,10\}$, etc., as shown
in Figure~\ref{mapping}(c). Clearly, $W = \{c,d,e,f,g\}$ since $cde$
and $cfg$ are wings. Cliques $Q_{16}$ and $Q_{17}$ are induced by an
$h$-star and an $i$-star, respectively. Thus $NW_\mathcal{P}=\{h,
i\}$. Note that $d_G(b)=3$ and no clique in $\mathcal{P}$ is induced
by $b$-star. Thus $R_\mathcal{P}=\{b\}$. The partition of $V_2(G)$
is shown as in Figure~\ref{mapping}(d).
\end{example}

\begin{figure}[htb]
\begin{center}
\subfigure[a graph $G$]{
\includegraphics[scale=0.35]{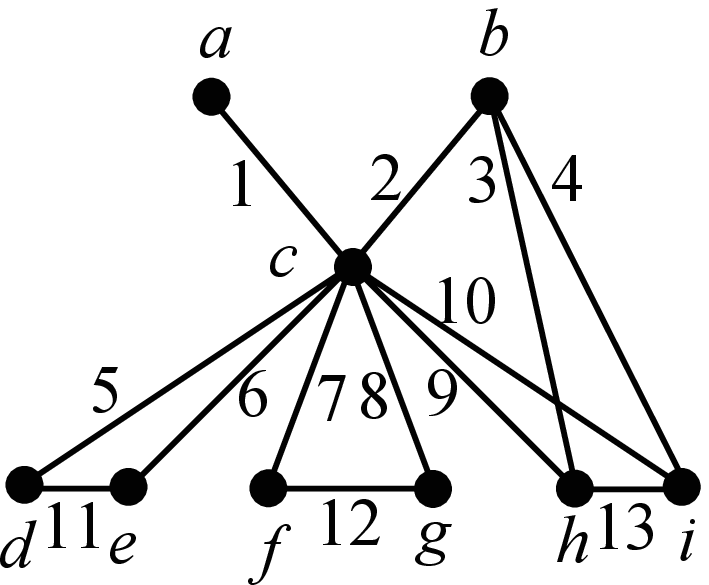}
} \quad \subfigure[line graph $G^*$]{
\includegraphics[scale=0.16]{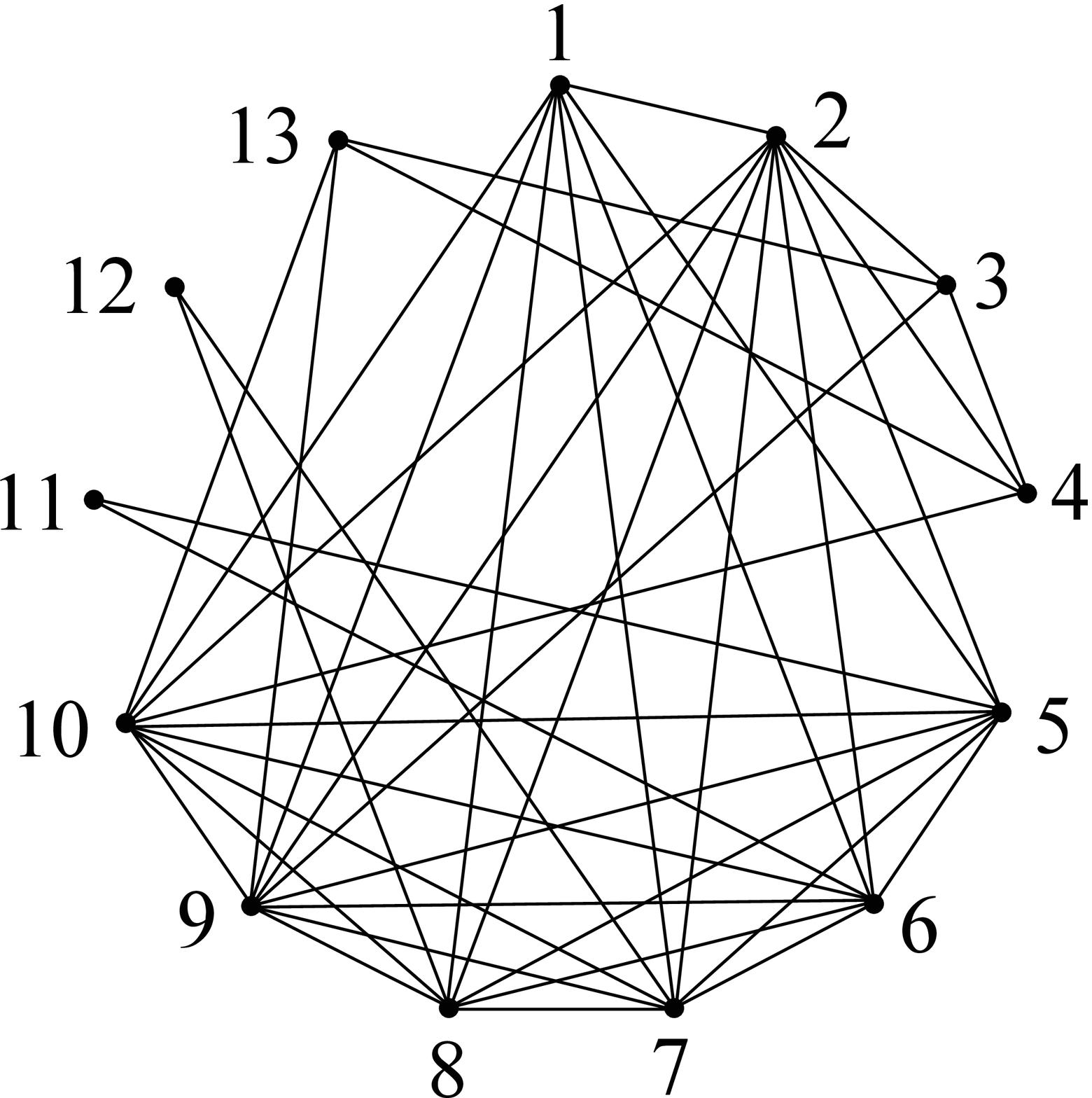}
} \quad \subfigure[a partition $\mathcal{P}$ of $G^*$]{
\includegraphics[scale=0.35]{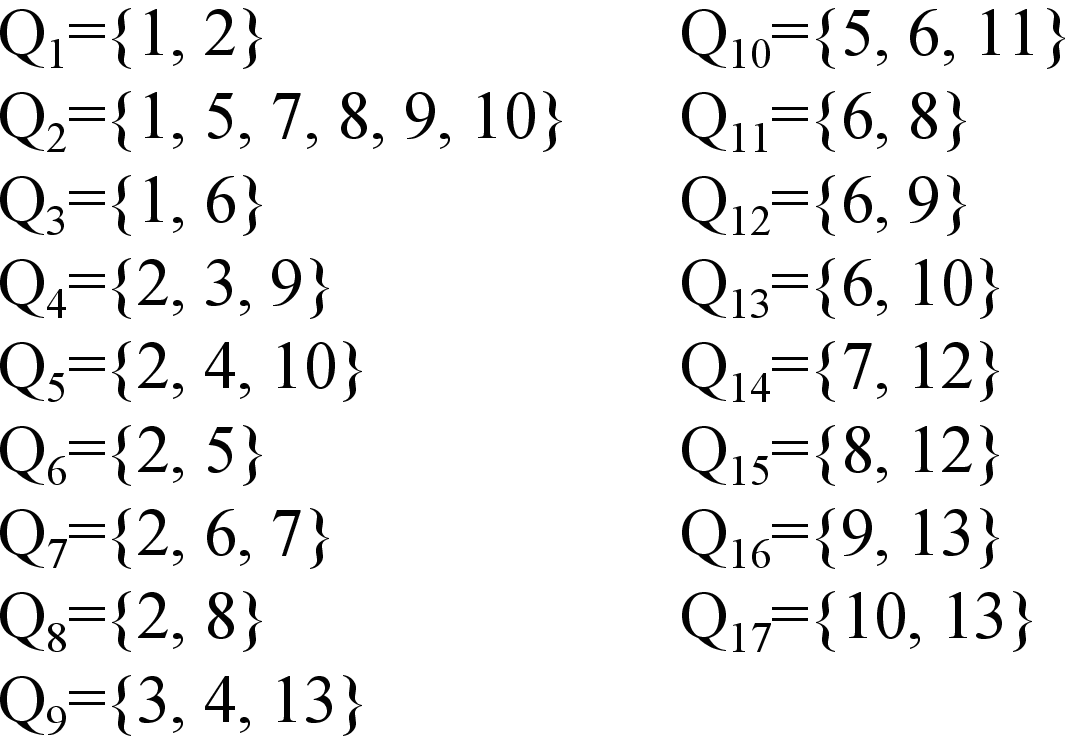}
} \quad \subfigure[Partitions of $V_2(G)$ and $\mathcal{P}$]{
\includegraphics[scale=0.25]{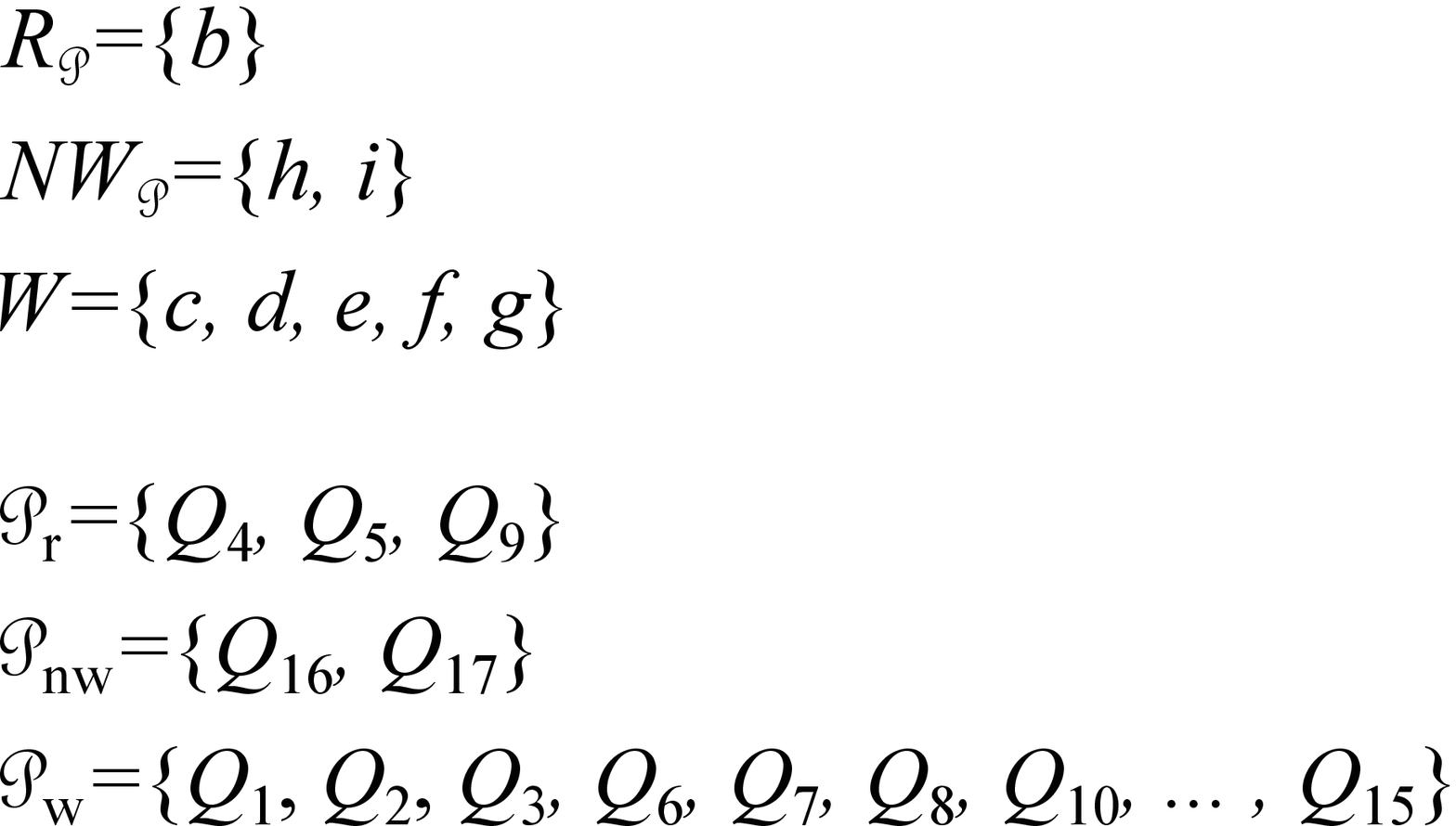}
} \quad \subfigure[$C_v$]{
\includegraphics[scale=0.25]{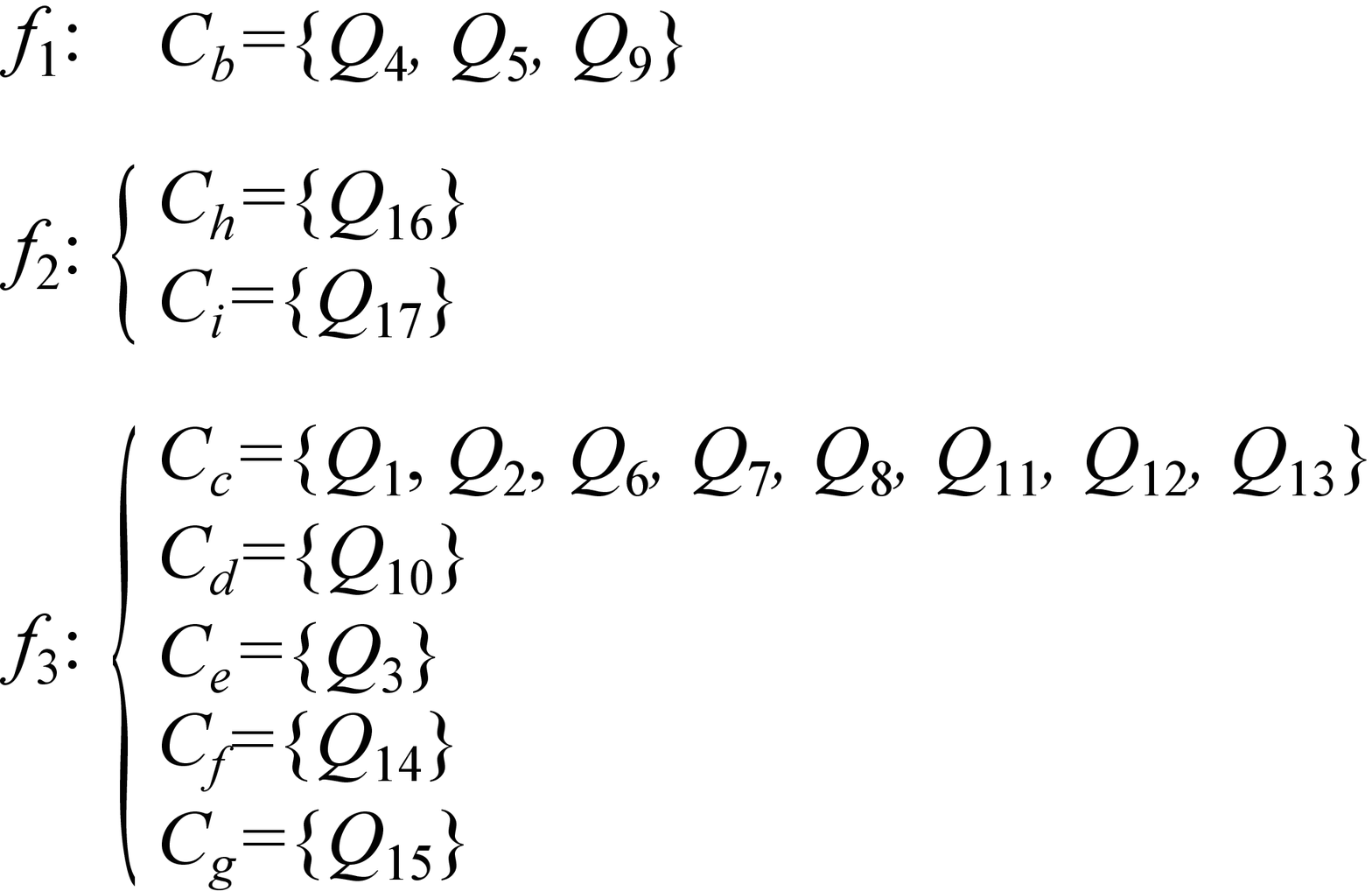}
}
\caption{$R_\mathcal{P}, NW_\mathcal{P}$ and $W$.}
\label{mapping}
\end{center}
\end{figure}

\bigskip

\begin{proposition}
\label{TypeIIIlowerbound} Let $\mathcal{P}$ be an edge-clique
partition of $G^*$ and $v$ a stalk vertex in $W$. If there are $t$
$v$-wings, then there are at least $2t+1$ cliques in $\mathcal{P}$.
\end{proposition}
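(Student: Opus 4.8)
The plan is to exhibit $2t+1$ pairwise distinct cliques of $\mathcal{P}$, from which the desired lower bound on $|\mathcal{P}|$ follows immediately. Write the $i$-th $v$-wing as the triangle $vx_iy_i$ in $G$, for $i\in[t]$. By the definition of a wing, only $v$ has degree exceeding $2$, so each tip vertex $x_i,y_i$ has degree exactly $2$; hence its only incident edges are $vx_i,x_iy_i$ (respectively $vy_i,x_iy_i$). Consequently the tip sets $\{x_i,y_i\}$ of distinct wings are disjoint, and the $3t$ edges $vx_i,vy_i,x_iy_i$ are all distinct vertices of $G^*$. In $G^*$ each wing yields a triangle $T_i=\{vx_i,vy_i,x_iy_i\}$, and the vertex $x_iy_i$ has degree exactly $2$ in $G^*$, with neighbours $vx_i$ and $vy_i$.

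The crucial local constraint comes from Whitney's Theorem~\ref{StarOrTriangle}: any clique of $\mathcal{P}$ covering the $G^*$-edge $(vx_i,x_iy_i)$ is induced by a star or a $K_3$ of $G$ that contains both edges $vx_i$ and $x_iy_i$. A star must be centred at their unique common vertex $x_i$, and since $d(x_i)=2$ it can only be the $K_2$ clique $\{vx_i,x_iy_i\}$; a $K_3$ must be the wing $vx_iy_i$ itself, giving $T_i$. The same dichotomy holds for $(vy_i,x_iy_i)$. Because $\mathcal{P}$ is a partition, each of these two $G^*$-edges lies in exactly one clique; hence for each $i$ either $T_i\in\mathcal{P}$ (and then $T_i$ also covers $(vx_i,vy_i)$), or else both $K_2$ cliques $\{vx_i,x_iy_i\}$ and $\{vy_i,x_iy_i\}$ lie in $\mathcal{P}$, in which case the remaining wing edge $(vx_i,vy_i)$ must be covered by a clique induced by a $v$-star.

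Let $s=w_v(\mathcal{P})$ be the number of wings with $T_i\in\mathcal{P}$. I would then collect three families of cliques: the $s$ triangles $T_i$; the $2(t-s)$ $K_2$ cliques $\{vx_i,x_iy_i\}$, $\{vy_i,x_iy_i\}$ arising from the remaining wings; and the cliques induced by $v$-stars. These families are pairwise distinct as subsets of $V(G^*)$: each counted $K_2$ clique contains a tip edge $x_iy_i$ that is not incident with $v$, which simultaneously rules it out as a $v$-star clique and distinguishes it from every triangle $T_j$ (each $T_j$ contains its own tip edge and is not a $v$-star clique), while distinctness within each family follows from disjointness of the tip sets. If $s>0$, Lemma~\ref{wings} supplies at least $s+1$ cliques induced by $v$-stars, yielding $s+2(t-s)+(s+1)=2t+1$ distinct cliques. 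If $s=0$, then no wing edge $(vx_i,vy_i)$ can be covered by a wing triangle, so at least one $v$-star clique is present, again yielding $2t+1$.

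I expect the main obstacle to be bookkeeping rather than a deep idea: one must check carefully that the three families really are pairwise disjoint, and that Lemma~\ref{wings} is invoked only when its hypothesis $w_v(\mathcal{P})>0$ holds, treating the degenerate case $s=0$ separately via the single-edge covering argument for some $(vx_i,vy_i)$. The one genuinely substantive point, handled by the second paragraph, is that no clique of $\mathcal{P}$ larger than $T_i$ or the two $K_2$'s can absorb the edges at $x_iy_i$; this is exactly where the degree-$2$ constraint on the tips, fed through Whitney's theorem, does the essential work.
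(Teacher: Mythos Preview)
Your proposal is correct and follows essentially the same approach as the paper: partition the wings according to whether the wing triangle lies in $\mathcal{P}$, count $s$ wing-triangles, $2(t-s)$ tip $K_2$'s, and at least $s+1$ cliques from $v$-stars via Lemma~\ref{wings}, summing to $2t+1$. Your treatment is in fact slightly more careful than the paper's, since you separate the case $s=0$ (where Lemma~\ref{wings} does not apply) and argue distinctness of the three families explicitly.
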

\begin{proof}
For the stalk vertex $v\in W$, let
\[\{vx_{2i-1}x_{2i}:
i \in [t]\}\] be the collection of $v$-wings in $G$. We bound the
cliques of $\mathcal{P}$ from below by partitioning them into the
following types. First, by definition, there are $w_v(\mathcal{P})$
$v$-wings that are 3-cliques in $\mathcal{P}$. Secondly, by
Whitney's theorem and Lemma~\ref{wings}, there are
$w_v(\mathcal{P})+1$ cliques in $\mathcal{P}$ induced by $v$-stars.
Thirdly, for each $v$-wing $vx_{2i-1}x_{2i}$ for
$w_v(\mathcal{P})+1\leq i\leq t$, there exist two cliques in
$\mathcal{P}$ induced by $S_{x_{2i-1}}^2$ and $S_{x_{2i}}^2$.

\medskip

\noindent
Adding them up, we find that each stalk
vertex $v\in W$ gives rise to at least
\begin{equation*}
w_v(\mathcal{P})+(w_v(\mathcal{P})+1)+2(t-w_v(\mathcal{P}))=2t+1
\end{equation*}
cliques in $\mathcal{P}$. This completes the proof. \qed\end{proof}

\bigskip

Partition the nontrivial cliques $C \in \mathcal{P}$ in
the following subsets.
\begin{enumerate}[\rm I.]
\item
\label{P_r}
$\mathcal{P}_{r}=
\{C: \text{$C$ is induced by a $K_3$ in $G$ that is neither a wing
nor a semiwing}\}$,
\item $\mathcal{P}_{nw}=
\{C:\text{$C$ is induced by a semiwing or a $v$-star with
$v\in NW_\mathcal{P}$}\}$ and
\item $\mathcal{P}_{w}=\{C:\text{$C$ is induced by a $v$-star or a
$v$-wing for $v\in W$}\}$.
\end{enumerate}
Notice that
the sets $\mathcal{P}_{r}, \mathcal{P}_{nw}$ and
$\mathcal{P}_{w}$ are disjoint.

\begin{example}
We use Figure~\ref{mapping} to illustrate the
sets $\mathcal{P}_{r}$,
$\mathcal{P}_{nw}$ and $\mathcal{P}_{w}$. In
Figure~\ref{mapping}(a), we can find that triangles
$bch, bhi, bci$
and $chi$ are neither wings nor semiwings. Cliques $Q_4, Q_5$ and
$Q_9$ in $\mathcal{P}$ are induced by triangles $bch,  bci$ and
$bhi$, respectively, in $G$. Thus $\mathcal{P}_{r}=\{Q_4,
Q_5,Q_9\}$ (see Figure~\ref{mapping}(d)).

\medskip

\noindent
It is easy to verify that
\[\mathcal{P}_{w}=\{Q_1,Q_2,Q_3,\;Q_6,Q_7,Q_8,
Q_{10},\ldots, Q_{15}\}\]
in which
\begin{enumerate}[\rm (i)]
\item $Q_{10}$ is induced by a
$c$-wing,
\item $Q_{14}$ is induced by an $f$-star,
\item $Q_{15}$ is induced by
a $g$-star, and
\item all other cliques are induced by $c$-stars.
\end{enumerate}

\medskip

\noindent
The
remaining collection is
$\mathcal{P}_{nw}=\{Q_{16}, Q_{17}\}$. Note that
$Q_{16}$ is an $h$-star and $Q_{17}$ is an $i$-star.
\end{example}

\bigskip

The following lemma appears in~\cite[Lemma 2.8]{mcgu90}.
\begin{lemma}
\label{Q >= R}
$|\mathcal{P}_\mathrm{r}|\geqslant |R_\mathcal{P}|$, with equality
occurring only if either $R_\mathcal{P}=\es$ or $G=K_4$.
\end{lemma}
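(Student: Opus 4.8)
The plan is to set up a weighted double count that, for each high-degree vertex $v\in R_\mathcal{P}$, matches the star-edges of $G^*$ meeting at $v$ against the triangle-cliques of $\mathcal{P}_\mathrm{r}$ that are forced to cover them. The whole argument rests on one local structural observation, after which the inequality and the equality discussion are mechanical.

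First I would record that observation. Fix $v\in R_\mathcal{P}$, so $d(v)\geq 3$ and $\mathcal{P}$ contains no clique induced by a $v$-star. The $d(v)$ edges of $G$ at $v$ form a clique $K_{d(v)}$ in $G^*$, and each of its $\binom{d(v)}{2}$ edges $(vu_i,vu_j)$ lies in exactly one clique of $\mathcal{P}$. By Theorem~\ref{StarOrTriangle} that clique is induced by a star or a $K_3$ of $G$; a star through both $vu_i$ and $vu_j$ must be centred at their only common endpoint $v$, i.e. be a $v$-star, which is forbidden, so the clique is the triangle-clique induced by the $K_3$ $\{v,u_i,u_j\}$. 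In particular every pair of neighbours of $v$ is adjacent, so $\{v\}\cup N(v)$ is a clique of $G$ on $d(v)+1\geq 4$ vertices; hence each $u_i$ satisfies $d(u_i)\geq d(v)\geq 3$. Consequently every covering triangle $\{v,u_i,u_j\}$ has all three vertices of degree at least $3$, so it is neither a wing nor a semiwing and lies in $\mathcal{P}_\mathrm{r}$. Distinct star-edges give distinct pairs, so $v$ is a vertex of $\binom{d(v)}{2}\geq 3$ distinct members of $\mathcal{P}_\mathrm{r}$.

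Next I would count incidences. Let $T$ be the number of pairs $(v,C)$ with $v\in R_\mathcal{P}$, $C\in\mathcal{P}_\mathrm{r}$, and $v$ a vertex of the triangle inducing $C$. Counting by $v$ and using the previous paragraph gives $T=\sum_{v\in R_\mathcal{P}}\binom{d(v)}{2}\geq 3|R_\mathcal{P}|$. Counting by $C$ gives $T\leq 3|\mathcal{P}_\mathrm{r}|$, since each triangle has only three vertices. Combining the two yields $|\mathcal{P}_\mathrm{r}|\geq|R_\mathcal{P}|$. For the equality case, suppose $|\mathcal{P}_\mathrm{r}|=|R_\mathcal{P}|$ and $R_\mathcal{P}\neq\es$; then both estimates on $T$ are tight. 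Tightness of the first forces $\binom{d(v)}{2}=3$, i.e. $d(v)=3$, for every $v\in R_\mathcal{P}$, and tightness of the second forces all three vertices of every $C\in\mathcal{P}_\mathrm{r}$ to lie in $R_\mathcal{P}$. Picking any $v\in R_\mathcal{P}$, its neighbours $u_1,u_2,u_3$ form a $K_4$ with $v$; the triangles $\{v,u_i,u_j\}$ lie in $\mathcal{P}_\mathrm{r}$, so each $u_i\in R_\mathcal{P}$ and hence $d(u_i)=3$. Since each of $v,u_1,u_2,u_3$ already has three neighbours inside this $K_4$, none has a neighbour outside it, and connectedness of $G$ forces $G=K_4$.

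I expect the main obstacle to be the structural step of the second paragraph: verifying that nothing other than the forbidden $v$-star can cover a star-edge at $v$, and then extracting from this that $N(v)$ is complete and that the covering triangles land in $\mathcal{P}_\mathrm{r}$ rather than in the semiwing part $\mathcal{P}_\mathrm{nw}$. This localization is exactly what makes the two counts of $T$ comparable; once it is in place, both the inequality and the determination of the equality cases are routine.
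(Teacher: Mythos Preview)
Your argument is correct. The key structural observation---that for $v\in R_{\mathcal{P}}$ every star-edge $(vu_i,vu_j)$ of $G^*$ can only be covered by the triangle-clique induced by $\{v,u_i,u_j\}$, whence $N(v)\cup\{v\}$ is a clique and the covering triangle lands in $\mathcal{P}_\mathrm{r}$---is exactly right, and the subsequent double count of incidence pairs $(v,C)$ is clean. The equality analysis is also sound: tightness on both sides forces $d(v)=3$ for all $v\in R_{\mathcal{P}}$ and forces every triangle in $\mathcal{P}_\mathrm{r}$ to have all three vertices in $R_{\mathcal{P}}$, which pins down the $K_4$.

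As for comparison with the paper: the paper does not prove Lemma~\ref{Q >= R} at all; it simply quotes it as Lemma~2.8 of McGuinness and Rees~\cite{mcgu90}. So you have supplied a self-contained proof where the paper relied on an external reference. Your incidence-counting route is natural and arguably the most direct way to get both the inequality and the equality characterisation in one stroke.
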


For a proof of the following lemma see~\cite[Lemma 2.9]{mcgu90}
\begin{lemma}
\label{semiwing}
If $G\neq W_t$ and $\mathcal{P}$ contains a clique induced by a
semiwing $wuv$ in $G$ with non-stalk vertex $w$, then $\mathcal{P}$
contains at least two cliques induced by $x$-stars for some $x\in
\{u, v\}$.
\end{lemma}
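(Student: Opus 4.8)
The plan is to exploit that the non-stalk vertex $w$ has degree $2$ in $G$, so that $N(w)=\{u,v\}$ and both stalk vertices satisfy $d(u),d(v)\geq 3$. Write the semiwing clique as $C_0=\{wu,wv,uv\}$, and fix a neighbour $a$ of $u$ with $a\notin\{w,v\}$ and a neighbour $b$ of $v$ with $b\notin\{w,u\}$; these exist precisely because $u$ and $v$ are stalk vertices.

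First I would force one star clique at each of $u$ and $v$. Consider the edge $(wu,ua)$ of $G^*$. Since $d(w)=2$ and $a\neq v$, the pair $w,a$ is non-adjacent, so no $K_3$ of $G$ contains both $wu$ and $ua$; by Theorem~\ref{StarOrTriangle} the unique clique of $\mathcal{P}$ covering $(wu,ua)$ must therefore be induced by a $u$-star. Call it $C_u$. Because $(wu,uv)\in C_0$ and $\mathcal{P}$ is a partition, $C_u$ cannot also contain $uv$; in particular $C_u\neq C_0$, and $C_u$ is a genuine (non-trivial) star clique at $u$. The symmetric argument applied to $(wv,vb)$ produces a $v$-star clique $C_v$ with $uv\notin C_v$ and $C_v\neq C_0$. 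As a non-trivial clique cannot be simultaneously a $u$-star and a $v$-star, we have $C_u\neq C_v$, so $\mathcal{P}$ already contains two distinct cliques induced by stalk-vertex stars.

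To upgrade this to two star cliques centred at a single vertex, I would examine how the remaining edges at the $G^*$-vertex $uv$ are covered. The edges $(uv,wu)$ and $(uv,wv)$ lie in $C_0$; every other edge at $uv$ has the form $(uv,ua_i)$ or $(uv,vb_j)$. If some $(uv,ua_i)$ is covered by a $u$-star, that clique contains $uv$ and hence differs from $C_u$ (which avoids $uv$), yielding two $u$-stars and finishing the proof; the same reasoning handles a $v$-star covering some $(uv,vb_j)$. Thus I may assume every such edge is absorbed into a triangle clique $\{uv,ua_i,va_i\}$ or $\{uv,ub_j,vb_j\}$. This forces each $a_i$ to be adjacent to $v$ and each $b_j$ to be adjacent to $u$, so that $N(u)=\{v\}\cup C$ and $N(v)=\{u\}\cup C$, where $C$ is the set of common neighbours of $u$ and $v$ and $w\in C$.

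The main obstacle is exactly this final \emph{all-triangles} configuration. Here the skeleton on $\{u,v\}\cup C$ is the diamond-like core of some $W_t$ (base edge $uv$, with the common neighbours playing the role of the degree-$2$ hubs), and the objective is to derive $G=W_t$, contradicting the hypothesis. The delicate point is controlling the common neighbours: each $c\in C$ is adjacent to both $u$ and $v$, yet a priori may carry further edges, and one must trace how those extra edges and the cliques they induce interact with $\mathcal{P}$ in order to force $d(c)=2$ and to rule out any vertex outside $\{u,v\}\cup C$. This degree-control step, rather than the initial forcing of $C_u$ and $C_v$, is where the exclusion $G\neq W_t$ is consumed, and I expect it to be the crux of the argument.
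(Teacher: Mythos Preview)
The paper does not prove this lemma; it refers the reader to \cite[Lemma~2.9]{mcgu90}. Your first two paragraphs are correct and isolate the right dichotomy. The gap you flag in the third paragraph, however, is not merely ``the crux that needs more care'': for an \emph{arbitrary} edge-clique partition $\mathcal{P}$ it cannot be closed, because the all-triangles configuration does not force $G=W_t$. Take $G$ to be $W_3$ together with one extra edge $c_1c_2$ (so $d(c_1)=d(c_2)=3$, $d(c_3)=2$, $d(u)=d(v)=4$); then $c_3uv$ is a semiwing and $G\ne W_t$ for any $t$. The nine cliques
\[
\{uv,uc_i,vc_i\}\ (i=1,2,3),\quad \{uc_1,uc_2,uc_3\},\quad \{vc_1,vc_2,vc_3\},\quad \{uc_i,c_1c_2\},\ \{vc_i,c_1c_2\}\ (i=1,2)
\]
form a valid edge-clique partition of $G^*$ containing the semiwing triangle yet having exactly one $u$-star clique and exactly one $v$-star clique. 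This is precisely your all-triangles configuration with a common neighbour of degree greater than two, so the degree-control step you hoped to carry out is impossible in this generality.

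The resolution is almost certainly that McGuinness and Rees state Lemma~2.9 under a standing minimality hypothesis on $\mathcal{P}$ (their paper concerns minimum clique partitions), which the present paper has suppressed when quoting it; note that the partition above uses nine cliques whereas the minimum for this $G^*$ is five, and every invocation of Lemma~\ref{semiwing} in this paper occurs under a size bound such as $|U(\mathcal{S})|\le\gamma$. To salvage your outline you would have to import such a bound and argue, in the all-triangles case, that the extra $c$-star cliques forced at each common neighbour $c$ with $d(c)>2$ (four of them in the example above) push $|\mathcal{P}|$ beyond it.
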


McGuinness and Rees define three functions in~\cite{mcgu90},
which we describe shortly.
\begin{enumerate}
\item A surjection $f_1:\mathcal{P}_{r}\rightarrow
R_{\mathcal{P}}$,
\item a bijection $f_2:\mathcal{P}_{nw}\rightarrow NW_{\mathcal{P}}$ and
\item another surjection $f_3:\mathcal{P}_{w}\rightarrow W$.
\label{surjection f_3}
\end{enumerate}

Since $|\mathcal{P}_{r}|\geq |R_\mathcal{P}|$ by Lemma~\ref{Q >= R},
$f_1$ can be designed to be surjective. For each stalk vertex $v\in
W$, by Proposition~\ref{TypeIIIlowerbound}, there exist at least
$2t+1$ cliques in $\mathcal{P}_{w}$ which can be assigned to the
$2t+1$ vertices lying in $v$-wings by $f_3$. Hence, $f_3$ is
surjective.

We impose some additional rules on the
construction of $f_3$. That is, for each $v$-wing $vxy$ in $G$, if
$vxy$ induces a $K_3$ in $\mathcal{P}_{w}$, then we always
assign the $K_3$ to $x$ or $y$; otherwise, assign the two 2-cliques
in $\mathcal{P}_{w}$ induced by $S_x^2$ and $S_y^2$ to $x$
and $y$, respectively. This completes the description of $f_3$.

\medskip

A
bijection $f_2: \mathcal{P}_{nw} \rightarrow NW_{\mathcal{P}}$
is described as follows. Let $u\in NW_{\mathcal{P}}$. If
$d(u)=2$ and $u$ lies in no triangle in $G$, then $S_u^2$ induces a
$K_2$ in $\mathcal{P}_{nw}$ which is assigned to $u$ in the
bijection. If $d(u)=2$ and $u$ lies in a triangle in $G$, then $u$
is the non-stalk vertex of a semiwing (assuming $G\neq K_3$). Thus
$\mathcal{P}_{nw}$ contains either the $K_2$ induced by
$S_u^2$ or the $K_3$ induced by the semiwing and either of them is
assigned to $u$. If $d(u)\geq 3$, then $\mathcal{P}$ contains
cliques induced by $u$-stars, which are in $\mathcal{P}_{nw}$
by definition, and these are assigned to $u$.

\medskip

To ensure that $f_2$ is
well-defined, recall that the non-stalk vertex of a semiwing $wuv$,
say $w$, must be in $NW_\mathcal{P}$. Therefore, if $wuv$ induces a
clique $C$ in $\mathcal{P}_{nw}$, then $f_2(C)=w$. Thus $f_2$
is a bijection function.

\bigskip

We obtain a surjection
$f:\mathcal{P} \rightarrow V_2(G)$ as follows.
When restricted to one of the domains
$\mathcal{P}_2$, $\mathcal{P}_{nw}$
or $\mathcal{P}_{w}$, the function is locally defined by
$f_1$, $f_2$ or $f_2$. Finally, we
assigning remaining trivial cliques in $\mathcal{P}$ arbitrarily
to vertices
in $V_2(G)$.
For $v \in V_2(G)$, let
\[C_v=\{C: C \in \mathcal{P} \text{ and } f(C)=v\}.\]
Figure~\ref{mapping}(e) is an example of $C_v$.

\bigskip

Before investigating $F_\mathrm{sd}(G^*)$, we define some terms. In
the rest of this section we assume that $\mathcal{S}\in
F_{sd}(G^*)$, $\mathcal{P}=\EGP(\mathcal{S})$ and $S(uv)$ denotes
the sets in $\mathcal{S}$ assigned to $uv\in V(G^*)$. Note that
$\mathcal{P}$ might contain some trivial cliques.

\begin{definition}
\label{df critical and inland} A vertex $v\in V_2(G)$ is
\textit{critical} if $v$ has a neighbor with degree 1; otherwise, it
is an \textit{inland} vertex. Let $V_\mathrm{c}$ and $V_\mathrm{i}$
denote the sets of critical and inland vertices.
\end{definition}

Assume that $V_\mathrm{c}=\{v_1, \dots, v_k\}$. For $i \in [k]$, let
$v_i^1, \dots ,v_i^{m_i}$ be the neighbors of $v_i$ of degree one.
Also, let $v_i^{m_i+1},\dots,v_i^{d(v_i)}$ be the remaining
neighbors of $v_i$.

\bigskip

We introduce some more notations.
\begin{align}
\label{eqn2}
&V_{cw}=V_\mathrm{c}\cap W\\
&V_{cnw}=V_\mathrm{c}\cap NW_{\mathcal{P}}\\
&\gamma=|V_\mathrm{i}|+\sum_{i=1}^{k}m_i\\
&\mathcal{P}_\mathrm{c}=\{C\in \mathcal{P}: \text{$C$ is induced by a
$v$-star with $v\in V_\mathrm{c}$}\} \quad\text{and}\\
&\mathcal{P}_\mathrm{i}=\mathcal{P}\setminus \mathcal{P}_\mathrm{c}.
\label{eqn3}
\end{align}

\bigskip

For $i \in [k]$, let
\begin{equation}
\label{eqn4}
\mathcal{S}_i=\{S(v_iv_i^1),
S(v_iv_i^2),\ldots, S(v_iv_i^{m_i})\}.
\end{equation}
Obviously, $\mathcal{S}_i \in F_{sd}(K_{m_i})$.  By
Theorem~\ref{SDomega K_n}, $|\EGP(\mathcal{S}_i)|\geq m_i$ for $i
\in [k]$.

\bigskip

\begin{example}
We use Figure~\ref{mapping} to illustrate the terminology that we
introduced above. In Figure~\ref{mapping}(a), the set of critical
vertices is $V_\mathrm{c}=\{c\}$ since $c$ is the only vertex which
has a neighbor that is of degree one. The set of inland vertices is
$V_\mathrm{i}=V_2(G)\setminus V_c=V\setminus\{a,c\}$.

\medskip

Thus as defined in Equations~\eqref{eqn2} up to~\eqref{eqn3}
and~\eqref{eqn4}:
\begin{align*}
& V_{cw}=\{c\}\\
& V_{cnw}=\es \\
& \gamma=|V_\mathrm{i}|+\sum_{i=1}^{k}m_i=7+1=8\\
& \mathcal{P}_\mathrm{c}=\{Q_1,Q_2,Q_3,Q_6,Q_7,Q_8,Q_{11},Q_{12},Q_{13}\}\\
& \mathcal{P}_\mathrm{i}=\{Q_4,Q_5,Q_9,Q_{10},Q_{14},\ldots,Q_{17}\}
\quad\text{and}\\
& \mathcal{S}_1=\{S(ca)\}.
\end{align*}
\end{example}

\bigskip

\begin{proposition}
\label{all members of Q(F_C^) are substars of Q(F)} All cliques in
$\EGP(\mathcal{S}_i)$, $i \in [k]$, are sub-cliques of some cliques
in $\mathcal{P}$ induced by $v_i$-stars. Furthermore,
$V_\mathrm{c}\cap R_\mathcal{P}=\es$ and $\mathcal{P}$ contains at
least $m_i$ cliques induced by $v_i$-stars for $i \in [k]$.
\end{proposition}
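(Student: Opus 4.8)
The plan is to exploit that the vertices $v_iv_i^1,\dots,v_iv_i^{m_i}$ of $G^*$ form a clique in which any two members share only the endpoint $v_i$ in $G$, since each $v_i^j$ has degree one. I would first record the structural fact that powers all three parts: if a clique $C\in\mathcal{P}$ contains some $v_iv_i^j$, then by Whitney's theorem (Theorem~\ref{StarOrTriangle}) $C$ is induced by a star or a $K_3$ of $G$; as $v_i^j$ has degree one it lies in no triangle, so $C$ is induced by a star, and the only endpoint that the edge $v_iv_i^j$ can share with the other edges of a nontrivial $C$ is $v_i$. Hence every clique of $\mathcal{P}$ meeting $\{v_iv_i^1,\dots,v_iv_i^{m_i}\}$ is induced by a $v_i$-star, a singleton $\{v_iv_i^j\}$ counting as the trivial $v_i$-star.

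For the first assertion I would apply the $\EGP$-bijection directly. Each clique of $\EGP(\mathcal{S}_i)$ is the image of an element $s\in U(\mathcal{S}_i)$ and equals $\{v_iv_i^j: s\in S(v_iv_i^j)\}=Q_s\cap\{v_iv_i^1,\dots,v_iv_i^{m_i}\}$, where $Q_s$ is the clique of $\mathcal{P}=\EGP(\mathcal{S})$ indexed by $s$. This is a subset of $Q_s$, and since $s\in U(\mathcal{S}_i)$ forces $Q_s$ to contain at least one $v_iv_i^j$, the structural fact shows $Q_s$ is induced by a $v_i$-star. Thus every clique of $\EGP(\mathcal{S}_i)$ is a sub-clique of a $v_i$-star clique of $\mathcal{P}$. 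For $V_\mathrm{c}\cap R_\mathcal{P}=\es$, I would note that a critical vertex $v_i$ has a degree-one neighbour $v_i^1$ and, being in $V_2(G)$, at least one further neighbour $u$; the edge $(v_iv_i^1,v_iu)$ of $G^*$ lies in some clique of $\mathcal{P}$, which by the structural fact is a $v_i$-star. Hence $\mathcal{P}$ contains a clique induced by a $v_i$-star, so $v_i\notin R_\mathcal{P}$.

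The third assertion is the delicate one. By Theorem~\ref{SDomega K_n}, $\theta_{sd}(K_{m_i})=m_i$, and since $\mathcal{S}_i$ is a simple, distinct representation of $K_{m_i}$ we get $|U(\mathcal{S}_i)|\ge m_i$; that is, $\EGP(\mathcal{S}_i)$ has at least $m_i$ cliques, one per element of $U(\mathcal{S}_i)$. Each such element $s$ produces a $v_i$-star clique $Q_s$ of $\mathcal{P}$ by the first part, so it remains to prove that $s\mapsto Q_s$ is injective on $U(\mathcal{S}_i)$. The main obstacle is exactly this injectivity. Suppose $Q_s=Q_{s'}$ with $s\ne s'$; then $s$ and $s'$ lie in exactly the same sets of $\mathcal{S}$, and simplicity forbids them from sharing two sets, so both are monopolists (Definition~\ref{df monopolist}) in a single common set $S(v_iv_i^j)$. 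I would rule this out using the minimality of $\mathcal{S}$: deleting $s'$ leaves the adjacencies of $G^*$ unchanged (as $s'$ is a monopolist), keeps $S(v_iv_i^j)$ nonempty and distinct from every other set (the surviving monopolist $s$ is in no other set), and preserves simplicity, yielding a simple, distinct representation with a strictly smaller universe --- contradicting $\mathcal{S}\in F_{sd}(G^*)$. Hence $s\mapsto Q_s$ is injective and $\mathcal{P}$ contains at least $m_i$ distinct cliques induced by $v_i$-stars.
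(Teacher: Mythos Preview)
Your argument is correct and follows the same line the paper has in mind. In fact the paper states Proposition~\ref{all members of Q(F_C^) are substars of Q(F)} without proof; the only justification it records is the sentence immediately preceding it (``Obviously, $\mathcal{S}_i\in F_{sd}(K_{m_i})$. By Theorem~\ref{SDomega K_n}, $|\EGP(\mathcal{S}_i)|\ge m_i$''), which is exactly your Whitney-plus-degree-one observation together with the lower bound $|U(\mathcal{S}_i)|\ge m_i$. Your write-up simply spells these steps out.

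One remark on the third part. Your injectivity argument via minimality is correct, but it is stronger than what the paper actually needs. Throughout Section~\ref{thetaF_sd} the paper treats $\mathcal{P}=\EGP(\mathcal{S})$ as the family indexed by $U(\mathcal{S})$ (so that $|\mathcal{P}|=|U(\mathcal{S})|$ identically, as used e.g.\ in the proof of Corollary~\ref{Q-I < V-I}). Under that convention, distinct elements $s\in U(\mathcal{S}_i)\subseteq U(\mathcal{S})$ automatically label distinct members of $\mathcal{P}$, so ``$\mathcal{P}$ contains at least $m_i$ cliques induced by $v_i$-stars'' follows immediately from $|U(\mathcal{S}_i)|\ge m_i$ together with your first assertion, without invoking minimality. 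Your extra argument shows the slightly stronger fact that these $v_i$-star cliques are pairwise distinct as sets, which is nice to have but not what the subsequent counting lemmas rely on.
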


\bigskip

\begin{corollary}
\label{Q-I < V-I} If $|U(\mathcal{S})|\leq \gamma$, then
$|\mathcal{P}_\mathrm{i}|\leq |V_\mathrm{i}|$.
\end{corollary}
\begin{proof}
By Proposition~\ref{all members of Q(F_C^) are substars of Q(F)} and
$|\mathcal{P}|=|\mathcal{P}_\mathrm{i}|+|\mathcal{P}_\mathrm{c}|$,
we obtain that
\[|\mathcal{P}| \geq
|\mathcal{P}_\mathrm{i}|+\sum_{i=1}^{k}|\EGP(\mathcal{S}_i)|.\] If
$|\mathcal{P}_\mathrm{i}|> |V_\mathrm{i}|$, then
\begin{eqnarray*}
|U(\mathcal{S})|&=&|\mathcal{P}|\\
&>&|V_\mathrm{i}|+\sum_{i=1}^{k}|EGP(\mathcal{S}_i)|\\
&\geqslant&|V_\mathrm{i}|+\sum_{i=1}^{k}m_i\\
&=&\gamma.
\end{eqnarray*}
This is a contradiction. \qed\end{proof}

\bigskip

\begin{lemma}
\label{least cliques which inland vertices need} If $G\neq K_3$,
then $|\mathcal{P}_\mathrm{i}|\geq |V_\mathrm{i}|- \sum_{v\in
V_{cw}}w_v(\mathcal{P})$.
\end{lemma}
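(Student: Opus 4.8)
The plan is to exploit the McGuinness--Rees surjection $f:\mathcal{P}\to V_2(G)$ assembled from $f_1,f_2,f_3$, and to bound $|\mathcal{P}_\mathrm{i}|$ by counting how many inland vertices receive, under $f$, a clique that genuinely lies in $\mathcal{P}_\mathrm{i}$ rather than in $\mathcal{P}_\mathrm{c}$. Recall that $\mathcal{P}_\mathrm{c}$ consists precisely of the cliques induced by $v$-stars with $v\in V_\mathrm{c}$. For $v\in V_\mathrm{i}$ write $C_v=f^{-1}(v)$; since the fibers of $f$ are pairwise disjoint, choosing one clique from each nonempty $C_v\cap\mathcal{P}_\mathrm{i}$ yields an injection into $\mathcal{P}_\mathrm{i}$. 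Hence $|\mathcal{P}_\mathrm{i}|\geq |\{v\in V_\mathrm{i}: C_v\cap\mathcal{P}_\mathrm{i}\neq\es\}|$, and it remains to control the set $B$ of \emph{bad} inland vertices, those with $C_v\subseteq\mathcal{P}_\mathrm{c}$, by showing $|B|\leq \sum_{v\in V_{cw}}w_v(\mathcal{P})$. Throughout I use $G\neq K_3$ so that $f$ is defined.

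Next I would run through the partition $V_\mathrm{i}\subseteq R_\mathcal{P}\cup NW_\mathcal{P}\cup W$ and verify that a bad vertex can only be of one very specific form. If $v\in V_\mathrm{i}\cap R_\mathcal{P}$, then surjectivity of $f_1$ places a clique of $\mathcal{P}_\mathrm{r}$ in $C_v$, and such a clique is $K_3$-induced, hence not a star and not in $\mathcal{P}_\mathrm{c}$; so $v$ is not bad. If $v\in V_\mathrm{i}\cap NW_\mathcal{P}$, the unique $f_2$-preimage of $v$ is either a semiwing-$K_3$ or a $v$-star with $v$ inland, so it lies in $\mathcal{P}_\mathrm{i}$. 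If $v\in V_\mathrm{i}\cap W$ is a wing stalk, the rules for $f_3$ assign it $v$-star cliques, which for inland $v$ lie in $\mathcal{P}_\mathrm{i}$; and a non-stalk wing vertex of a wing that does \emph{not} form a $K_3$ in $\mathcal{P}$ receives its own clique induced by $S_v^2$, again in $\mathcal{P}_\mathrm{i}$. The only remaining possibility is a degree-$2$ non-stalk vertex of a wing $uxy$ that \emph{does} form a $K_3$ in $\mathcal{P}$: here the single wing $K_3$ is assigned to one of $x,y$ (that vertex is fine, the $K_3$ lying in $\mathcal{P}_\mathrm{i}$), while the other inherits a leftover $u$-star clique, which belongs to $\mathcal{P}_\mathrm{c}$ exactly when the stalk $u$ is critical.

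The crux is the bookkeeping in this last case. Using Proposition~\ref{TypeIIIlowerbound} and Lemma~\ref{wings} to account for the $2t+1$ cliques at a stalk $u$ with $t$ $u$-wings, I would confirm that, of the $w_u(\mathcal{P})+1$ cliques induced by $u$-stars, exactly $w_u(\mathcal{P})$ are spent on the leftover non-stalk vertices of the $w_u(\mathcal{P})$ wings that are $K_3$'s in $\mathcal{P}$, one per wing, with the last going to the stalk itself. A non-stalk wing vertex has degree $2$ and no degree-$1$ neighbor, hence is always inland, whereas a critical wing vertex must be a stalk; therefore these leftover vertices are inland, and they are bad precisely when $u\in V_{cw}$. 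Each $K_3$-wing of such a $u$ produces exactly one bad vertex, and a degree-$2$ vertex lies in at most one wing, so no vertex is double-counted; thus $|B|\leq \sum_{v\in V_{cw}}w_v(\mathcal{P})$. Any trivial cliques that $f$ happens to dump on a leftover vertex can only shrink $B$, so the bound is safe. Combining with the injection gives $|\mathcal{P}_\mathrm{i}|\geq |V_\mathrm{i}|-|B|\geq |V_\mathrm{i}|-\sum_{v\in V_{cw}}w_v(\mathcal{P})$, as required. I expect this final tallying of the $u$-star cliques against the leftover $K_3$-wing vertices to be the main obstacle, since it is where the exact assignment conventions of $f_3$ must be pinned down.
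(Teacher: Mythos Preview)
Your proposal is correct and follows essentially the same approach as the paper: both exploit the surjection $f$ and count the inland vertices whose fiber $C_v$ falls entirely into $\mathcal{P}_\mathrm{c}$, showing via the case analysis over $R_\mathcal{P}$, $NW_\mathcal{P}$, and $W$ that such vertices arise only as the ``leftover'' non-stalk vertex of a $K_3$-wing with critical stalk, yielding at most $\sum_{v\in V_{cw}} w_v(\mathcal{P})$ of them. The paper phrases this slightly differently---working over all of $V_2(G)$ with the set $V_2'(G)=\{v: C'_v\subseteq\mathcal{P}_\mathrm{c}\}$ (trivial cliques stripped out) and then subtracting $|V_\mathrm{c}|$ at the end---but the substance is identical.
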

\begin{proof}
Let $V_2'(G)=\{v\in V_2(G): C'_v\subseteq \mathcal{P}_\mathrm{c}\}$,
where $C'_v$ is the set obtained by removing all trivial cliques
from $C_v$. For all $v \in V_2(G) \setminus V_2'(G)$, it follows
that $C_v\cap \mathcal{P}_\mathrm{i}\neq\es$ implies that
$|\mathcal{P}_\mathrm{i}| \geq |V_2(G)| - |V'_2(G)|$.

\medskip

\noindent By the construction of $f_1$, $V_2'(G)\cap
R_\mathcal{P}=\es$. We claim that $v\in NW_\mathcal{P}\cap V_2'(G)$
implies that $v \in V_\mathrm{cnw}$. The reason is the following.

\medskip

\noindent If $v\in NW_\mathcal{P}\cap V_2'(G)$, then all nontrivial
cliques in $C_v$ are also in $\mathcal{P}_\mathrm{c}$. By the
construction of $f_2$, all those cliques are induced by $v$-stars.
Hence $v\in V_\mathrm{c}$. This proves the claim.

\medskip

\noindent By the construction of $f_3$, for each $v\in V_{cw}$,
there are exactly $1+w_v(\mathcal{P})$ vertices in $V_2'(G)$.
Moreover, for each stalk vertex $v\in W\setminus V_\mathrm{c}$, no
vertex in a $v$-wing is in $V_2'(G)$. This proves the correctness of
the following derivation:
\begin{align*}
|\mathcal{P}_\mathrm{i}| & \geq  |V_2(G)| - |V'_2(G)|\\
& =  |V_2(G)| - (|V_{cnw}|+\sum_{v\in V_{cw}}(1+w_v(\mathcal{P})))\\
& =  |V_2(G)| - (|V_{cnw}| +|V_{cw}|+\sum_{v\in V_{cw}}w_v(\mathcal{P}))\\
& =  |V_2(G)| - (|V_\mathrm{c}| +\sum_{v\in V_{cw}}w_v(\mathcal{P}))
\tag{by Proposition~\ref{all members of Q(F_C^) are substars of Q(F)}}\\
& =  |V_\mathrm{i}|-\sum_{v\in V_{cw}}w_v(\mathcal{P}).
\end{align*}
This completes the proof.
\qed\end{proof}

\bigskip

\begin{lemma}
\label{no critical vertex in W} If $G$ is not a $\mathrm{TP}_1$ and
$|U(\mathcal{S})|\leq \gamma$, then $w_v(\mathcal{P})=0$ for every
$v\in V_{cw}$.
\end{lemma}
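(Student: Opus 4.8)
The plan is to argue by contradiction: suppose that $w_v(\mathcal{P})\geq 1$ for some critical wing stalk $v=v_i\in V_{cw}$, while $G\neq\mathrm{TP}_1$. I would first note that if $V_{cw}\neq\es$ then $G$ has a vertex of degree one, so $G\neq K_3$ and Lemma~\ref{least cliques which inland vertices need} applies. Write $s_j$ for the number of cliques of $\mathcal{P}$ induced by $v_j$-stars ($j\in[k]$). Since every clique of $\mathcal{P}_{\mathrm c}$ is induced by a $v_j$-star for a unique stalk $v_j\in V_{\mathrm c}$ (a star clique has a unique centre, and a wing-triangle is induced by a $K_3$, not a star), we have $|\mathcal{P}_{\mathrm c}|=\sum_{j=1}^k s_j$, while Proposition~\ref{all members of Q(F_C^) are substars of Q(F)} gives $s_j\geq m_j$. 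Combining $|U(\mathcal{S})|=|\mathcal{P}|=|\mathcal{P}_{\mathrm c}|+|\mathcal{P}_{\mathrm i}|\leq\gamma$ with the bound $|\mathcal{P}_{\mathrm i}|\geq|V_{\mathrm i}|-\sum_{v\in V_{cw}}w_v(\mathcal{P})$ of Lemma~\ref{least cliques which inland vertices need} and $\gamma=|V_{\mathrm i}|+\sum_j m_j$ yields the master inequality
\[\sum_{j=1}^k\bigl(s_j-m_j\bigr)\;\leq\;\sum_{v\in V_{cw}}w_v(\mathcal{P}).\]
The strategy is to contradict this by showing that each critical wing stalk $v_j$ with $w_j:=w_{v_j}(\mathcal{P})\geq 1$ already forces $s_j-m_j>w_j$.

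To bound $s_j$ from below, I would localize to the clique $K'_j$ of $G^*$ spanned by the $m_j$ edges joining $v_j$ to its plumes together with the $2w_j$ edges joining $v_j$ to the non-stalk vertices of its $K_3$-wings, so that $K'_j\cong K_{m_j+2w_j}$. The key combinatorial point, read off from Whitney's Theorem~\ref{StarOrTriangle}, is that the only cliques of $\mathcal{P}$ meeting $K'_j$ in at least two vertices and \emph{not} induced by a $v_j$-star are exactly the $w_j$ wing-triangles: a plume edge lies in no triangle, and a triangle through $v_j$ carrying two of the chosen edges must carry both edges of a single wing. Each such wing-triangle restricts to one edge of $K'_j$, and these $w_j$ edges are pairwise disjoint. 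Hence the restriction of $\mathcal{P}$ to $K'_j$ is an edge-clique partition of $K_{m_j+2w_j}$ consisting of these $w_j$ disjoint $K_2$'s together with the restrictions of $v_j$-star cliques; by Corollary~\ref{deBr} it has at least $m_j+2w_j$ members, so at least $m_j+w_j$ of them arise from $v_j$-star cliques, giving $s_j\geq m_j+w_j$.

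It remains to make this strict unless $G=\mathrm{TP}_1$, and this is the delicate step. Equality $s_j-m_j=w_j$ would force the restricted partition to attain the De~Bruijn--Erd\H{o}s bound, hence by Corollary~\ref{deBr} to be a near-pencil or a projective plane. A projective plane is impossible, since the wing-$K_2$'s would have to be lines of size two; and in a near-pencil all $K_2$'s pass through one apex point, so the $w_j$ pairwise-disjoint wing-$K_2$'s number at most one, forcing $w_j=1$. In that remaining case the apex is the $v_j$-edge $vy$ to a non-stalk wing vertex $y$, and every pair of $K'_j$ through $vy$ is already covered inside $K'_j$. I would then show that any further neighbour $z$ of $v_j$ creates the pair $(vz,vy)$, which, as $d(y)=2$, lies in no triangle and cannot be absorbed into any near-pencil clique without double-covering a pair of $K'_j$; it therefore requires a fresh $v_j$-star clique, so $s_j\geq m_j+w_j+1$. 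Thus equality holds only when $v_j$ has no neighbours beyond its $m_j$ plumes and single wing, i.e.\ exactly when $G=\mathrm{TP}_1$. Since $G\neq\mathrm{TP}_1$, every critical wing stalk with $w_j\geq 1$ satisfies $s_j-m_j>w_j$, while $s_j-m_j\geq 0=w_j$ when $w_j=0$; summing over $j$, with $v_i$ contributing strictly, contradicts the master inequality. The main obstacle is precisely this strictness analysis—showing that the near-pencil equality case coincides with $G=\mathrm{TP}_1$—for which the apex-pair argument is the crucial device.
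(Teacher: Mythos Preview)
Your proof is correct and follows essentially the same route as the paper: localize to the clique $K_{m_j+2w_j}$ in $G^*$ spanned by the plume-edges and wing-edges of $v_j$, apply the De~Bruijn--Erd\H{o}s bound (Corollary~\ref{deBr}) to the restricted partition, and obtain strictness by exhibiting an additional neighbour when $G\neq\mathrm{TP}_1$; the final contradiction via Lemma~\ref{least cliques which inland vertices need} is also the paper's. Your packaging via the ``master inequality'' $\sum_j(s_j-m_j)\leq\sum_{v\in V_{cw}}w_v(\mathcal{P})$ is a clean way to organize the same computation; one small imprecision is that in the boundary case $m_j+2w_j=3$ the near-pencil apex can be the plume-edge rather than a wing-edge, but the extra-neighbour argument goes through there as well.
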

\begin{proof}
Suppose to the contrary there exists a $v \in V_{cw}$ with
$w_v(\mathcal{P})\neq 0$.
Let $u_1,\dots ,u_m$ be $v$'s
neighbors that have
degree one. For $1 \leq i \leq w_v(\mathcal{P})$,
let $vx_{2i-1}x_{2i}$
be the $v$-wings which induce 3-cliques in
$\mathcal{P}$.

\medskip

\noindent
Let $G'$ be the subgraph of $G^*$ with
\begin{align*}
V(G^{\prime})&=\{vu_1, vu_2, \ldots, vu_m, vx_1, vx_2,\ldots
, vx_{2w_v(\mathcal{P})}\}\\
E(G^{\prime})&=E(G[V^{\prime}]) \setminus
\{(vx_{2i-1},vx_{2i}): 1\leq
i\leq w_v(\mathcal{P})\}.
\end{align*}

\medskip

\noindent
By Corollary~\ref{deBr}, any
edge-clique partition of $G'$ has at least $m+w_v(\mathcal{P})$ cliques
since there exists a clique of size
$m+w_v(\mathcal{P})$ in $G'$. Moreover, any
clique in $\mathcal{P}$ which contains edges of $G'$ is clearly
induced by a $v$-star. Thus $\mathcal{P}$ has at least
$m+w_v(\mathcal{P})$ cliques induced by $v$-stars.

\medskip

\noindent
The case where $\mathcal{P}$ has exactly $m+w_v(\mathcal{P})$ cliques
induced by $v$-stars occurs only when $w_v(\mathcal{P})=1$. The
reason is the following.

\medskip

\noindent
When $w_v(\mathcal{P})=1$, the edges
of $G'\cup (vx_1,vx_2)$ are partitioned by an
$\NP$ consisting of:
\begin{enumerate}[\rm (1)]
\item a
clique of size $m+2w_v(\mathcal{P})-1$ induced by the set of vertices
$V(G')\setminus \{vx_1\}$ and,
\item $m+2w_v(\mathcal{P})-1$ $K_2$'s intersecting in vertex $vx_1$.
\end{enumerate}
Note that the 2-clique $\{vx_1,vx_2\}$ is contained in the $K_3$ in
$\mathcal{P}$ induced by $vx_1x_2$. Thus $\mathcal{P}$ has exactly
$m+w_v(\mathcal{P})$ cliques induced by $v$-stars only when
$w_v(\mathcal{P})=1$.

\medskip

\noindent Consider the  case where $w_v(\mathcal{P})=1$. Since $G$
is not a TP$_1$, there exists another $v$-wing. Thus $v$ has a
neighbor
\[y\notin \{u_1,u_2,\ldots ,u_m,x_1,x_2\}.\]
Add the vertex $vy$ to $G'$ together with all those edges in
$E(G^*)$ that have endpoint $vy$ and the other endpoint in $V(G')$.
Let $G^+$ be this subgraph of $G^*$. Since $y$ is not adjacent to
any vertex in $\{u_1,u_2,\ldots ,u_m,x_1,x_2\}$ of $G$, any clique
in $\mathcal{P}$ which contains edges of $G^+$ is still induced by a
$v$-star. Therefore, by Corollary~\ref{deBr}, $\mathcal{P}$ has at
least $m+2$ cliques induced by $v$-stars. That is, if
$w_v(\mathcal{P}) \neq 0$, then $\mathcal{P}$ contains more than
$m+w_v(\mathcal{P})$ cliques induced by $v$-stars.

\medskip

\noindent On the other hand, by Proposition~\ref{all members of
Q(F_C^) are substars of Q(F)}, for each $v_i \in
V_\mathrm{c}\setminus W \cup V_{cw}$ with $w_{v_i}(\mathcal{P})=0$,
$\mathcal{P}$ contains at least $m_i$ cliques induced by
$v_i$-stars. It follows that
\[|\mathcal{P}_\mathrm{c}| > \sum_{i=1}^k
m_i+\sum_{v\in V_{cw}}w_v(\mathcal{P}).\] Thus by Lemma~\ref{least
cliques which inland vertices need},
\begin{align*}
|U(\mathcal{S})|&= |\mathcal{P}|\\
&=|\mathcal{P}_\mathrm{i}| + |\mathcal{P}_\mathrm{c}|\\
& >  |V_\mathrm{i}|-\sum_{v\in V_{cw}}w_v(\mathcal{P}) + \sum_{i=1}^k
m_i+\sum_{v\in V_{cw}}w_v(\mathcal{P})=\gamma.
\end{align*}
This contradiction concludes our proof.
\qed\end{proof}

\bigskip

\begin{corollary}
\label{f^{-1}(inland) not empty after delete critical star} If $G$
is not a $\mathrm{TP}_1$ and $|U(\mathcal{S})|\leq \gamma$, then
$C_v\cap \mathcal{P}_\mathrm{i}\neq \es$ for all $v \in V_i$.
\end{corollary}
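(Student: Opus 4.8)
The plan is to argue by contradiction, re-using the auxiliary set
\[
V_2'(G)=\{v\in V_2(G): C_v'\subseteq \mathcal{P}_\mathrm{c}\}
\]
from the proof of Lemma~\ref{least cliques which inland vertices need}, where $C_v'$ denotes $C_v$ with all its trivial cliques deleted. Suppose, contrary to the claim, that some inland vertex $u\in V_\mathrm{i}$ has $C_u\cap\mathcal{P}_\mathrm{i}=\es$. Since $\mathcal{P}_\mathrm{i}=\mathcal{P}\setminus\mathcal{P}_\mathrm{c}$, this forces every nontrivial clique of $C_u$ into $\mathcal{P}_\mathrm{c}$, that is $C_u'\subseteq\mathcal{P}_\mathrm{c}$, so $u\in V_2'(G)$. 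The whole proof then reduces to establishing the inclusion $V_2'(G)\subseteq V_\mathrm{c}$ under the present hypotheses, for then $u\in V_2'(G)\subseteq V_\mathrm{c}$ contradicts $u\in V_\mathrm{i}=V_2(G)\setminus V_\mathrm{c}$. (We assume $G\neq K_3$, since the surjection $f$ of McGuinness and Rees, on which $C_v$ and the invoked lemmas rest, is defined only for $G\neq K_3$.)

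To prove $V_2'(G)\subseteq V_\mathrm{c}$ I would split $V_2(G)$ along its partition $R_\mathcal{P}\cup NW_\mathcal{P}\cup W$ and reuse the three facts already established inside the proof of Lemma~\ref{least cliques which inland vertices need}: first, $V_2'(G)\cap R_\mathcal{P}=\es$; second, $NW_\mathcal{P}\cap V_2'(G)\subseteq V_{cnw}$; and third, for each stalk $v\in V_{cw}$ exactly $1+w_v(\mathcal{P})$ of the vertices lying in $v$-wings belong to $V_2'(G)$, while for each stalk $v\in W\setminus V_\mathrm{c}$ no vertex lying in a $v$-wing belongs to $V_2'(G)$. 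The first two facts already give $V_2'(G)\cap R_\mathcal{P}=\es$ and $NW_\mathcal{P}\cap V_2'(G)\subseteq V_{cnw}\subseteq V_\mathrm{c}$, so it remains only to control $W\cap V_2'(G)$.

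The crucial step is to invoke Lemma~\ref{no critical vertex in W}: because $G$ is not a $\mathrm{TP}_1$ and $|U(\mathcal{S})|\leq\gamma$, we have $w_v(\mathcal{P})=0$ for every $v\in V_{cw}$. Hence each critical stalk contributes exactly $1+w_v(\mathcal{P})=1$ vertex, namely $v$ itself, to $V_2'(G)$, and the inland stalks contribute none; combining this with $V_2'(G)\cap R_\mathcal{P}=\es$, with $NW_\mathcal{P}\cap V_2'(G)\subseteq V_{cnw}$, and with the equality $V_\mathrm{c}=V_{cnw}\cup V_{cw}$ (which holds since $V_\mathrm{c}\cap R_\mathcal{P}=\es$ by Proposition~\ref{all members of Q(F_C^) are substars of Q(F)}), we obtain $V_2'(G)\subseteq V_{cnw}\cup V_{cw}=V_\mathrm{c}$, as required. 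I expect the only delicate point to be this $W$-part of the count: one must check that when $w_v(\mathcal{P})=0$ every $v$-wing $vxy$ at a critical stalk $v$ is split by $\mathcal{P}$, so that $f_3$ hands the degree-$2$, hence inland, vertices $x,y$ the cliques induced by $S_x^2$ and $S_y^2$, which lie in $\mathcal{P}_\mathrm{i}$ and therefore keep $x,y$ out of $V_2'(G)$. It is precisely an intact $K_3$-wing (the case $w_v(\mathcal{P})>0$) that would route a $v$-star clique of $\mathcal{P}_\mathrm{c}$ to an inland vertex and break the inclusion, which is exactly why the hypotheses of Lemma~\ref{no critical vertex in W} are indispensable here.
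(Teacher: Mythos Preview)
Your proof is correct and follows essentially the same approach as the paper. The paper argues directly by case analysis on whether the inland vertex $v$ lies in $R_{\mathcal{P}}\cup NW_{\mathcal{P}}$ or in $W$, invoking Lemma~\ref{no critical vertex in W} for the $W$-case exactly as you do; your contrapositive framing via $V_2'(G)\subseteq V_\mathrm{c}$ is just a repackaging of the same case split and the same key lemma.
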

\begin{proof}
Let $v\in V_\mathrm{i}$. First assume that $v \in R_\mathcal{P} \cup
NW_\mathcal{P}$. By the construction of $f_1$ and $f_2$, we can
obtain that $C_v$ has a clique induced by a triangle or a nontrivial
$v$-star in $G$. Such a clique is in $\mathcal{P}_\mathrm{i}$ and
therefore the corollary holds in this case.

\medskip

\noindent Now assume that $v\in V_{cw}$. Since $w_v(\mathcal{P})=0$
by Lemma~\ref{no critical vertex in W}, every non-stalk vertex $x$
of the $v$-wings (which is in $V_\mathrm{i}$) is assigned by $f_3$
to the clique induced by $S_x^2$ (which is in
$\mathcal{P}_\mathrm{i}$). For a stalk vertex $v\in W\setminus
V_\mathrm{c}$, all cliques induced by $v$-wings or $v$-stars are in
$\mathcal{P}_\mathrm{i}$. Therefore, by the construction of $f_3$,
for all vertices $x$ in $v$-wings, $C_x\cap
\mathcal{P}_\mathrm{i}\neq \es$. This completes the proof.
\qed\end{proof}

\bigskip

\begin{lemma}
\label{limit on inland vertex and critical star} If $G$ is not a
$\mathrm{TP}_1$ and $|U(\mathcal{S})|\leq \gamma$, then the
following statements hold.
\begin{enumerate}[\rm (1)]
\item $|\mathcal{P}_\mathrm{i}|= |V_\mathrm{i}|$,
\item for $i \in [k]$ and $v_i \in V_c$,
$\mathcal{P}$ contains exactly $m_i$ cliques
induced by $v_i$-stars,
\item for every $v\in V_\mathrm{i}\setminus V_{3w}$,
$\mathcal{P}$ contains at most one nontrivial clique induced by
$v$-stars unless $G=3K_2\vee K_1$.%
\footnote{See Definition~\vref{df v-stars} to recall the
definition of $V_{3w}$.}
\end{enumerate}
\end{lemma}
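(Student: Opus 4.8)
The plan is to handle the three statements in order, extracting from the first two a ``tightness'' picture that drives the third; throughout I assume $G\neq K_3$, which is the standing hypothesis of this section (for $G=K_3$ we have $V_\mathrm{c}=\es$ and $\gamma=3$, and the statements are immediate). For statement (1) I would simply sandwich $|\mathcal{P}_\mathrm{i}|$. Corollary~\ref{Q-I < V-I} gives $|\mathcal{P}_\mathrm{i}|\leq |V_\mathrm{i}|$, while Lemma~\ref{least cliques which inland vertices need} gives $|\mathcal{P}_\mathrm{i}|\geq |V_\mathrm{i}|-\sum_{v\in V_{cw}}w_v(\mathcal{P})$; since $G$ is not a $\mathrm{TP}_1$ and $|U(\mathcal{S})|\leq\gamma$, Lemma~\ref{no critical vertex in W} makes every term $w_v(\mathcal{P})$ vanish, so both bounds meet at $|V_\mathrm{i}|$.

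For statement (2) the engine is the identity $|U(\mathcal{S})|=|\mathcal{P}|=|\mathcal{P}_\mathrm{i}|+|\mathcal{P}_\mathrm{c}|$, which together with (1) and $|U(\mathcal{S})|\leq\gamma$ yields $|\mathcal{P}_\mathrm{c}|\leq\sum_{i=1}^k m_i$. To match this with a lower bound, I would, for each critical $v_i$, isolate the cliques of $\mathcal{P}$ containing one of the edges $v_iv_i^1,\dots,v_iv_i^{m_i}$ to a degree-one neighbor. Since each $v_i^\ell$ has degree one, any such clique is forced to be induced by a $v_i$-star, so this family is exactly the universe of $\mathcal{S}_i$ under the $\EGP$-correspondence and hence, by Theorem~\ref{SDomega K_n}, has at least $m_i$ members (as $|\EGP(\mathcal{S}_i)|\geq m_i$); moreover all of them lie in $\mathcal{P}_\mathrm{c}$. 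These families are pairwise disjoint, because a clique carrying an edge at $v_i$ cannot also be a $v_j$-star for $j\neq i$: its unique centre would have to be both $v_i$ and $v_j$. Summing gives $|\mathcal{P}_\mathrm{c}|\geq\sum_{i=1}^k m_i$, so equality holds, each family has exactly $m_i$ cliques, and together they exhaust $\mathcal{P}_\mathrm{c}$; any further $v_i$-star clique would also sit in $\mathcal{P}_\mathrm{c}$, so there are none, giving exactly $m_i$ cliques induced by $v_i$-stars.

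Statement (3) is where the real work lies. First I would record the tightness produced by (1): the fibres $C_v\cap\mathcal{P}_\mathrm{i}$, $v\in V_\mathrm{i}$, are pairwise disjoint subsets of $\mathcal{P}_\mathrm{i}$, each nonempty by Corollary~\ref{f^{-1}(inland) not empty after delete critical star}; as there are $|V_\mathrm{i}|=|\mathcal{P}_\mathrm{i}|$ of them, every fibre is a singleton and they partition $\mathcal{P}_\mathrm{i}$. Now fix an inland $v\notin V_{3w}$ and suppose $\mathcal{P}$ has two nontrivial $v$-star cliques; note $d(v)\geq 3$, since a $v$-star on $d(v)=2$ edges admits at most one nontrivial sub-star. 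If $v\in NW_\mathcal{P}$, the construction of $f_2$ assigns every $v$-star clique to $v$, so both nontrivial cliques lie in $C_v\cap\mathcal{P}_\mathrm{i}$, contradicting that this fibre is a singleton. If $v$ is a stalk vertex in $W$ with $t$ wings, I would count the cliques of $\mathcal{P}_\mathrm{w}$ charged to the $2t+1$ inland vertices of the $v$-wings: Proposition~\ref{TypeIIIlowerbound} already accounts for $2t+1$ of them, and Lemma~\ref{wings} shows that $w_v(\mathcal{P})>0$ forces strictly more than $w_v(\mathcal{P})+1$ cliques induced by $v$-stars, hence strictly more than $2t+1$ wing cliques, except exactly when $w_v(\mathcal{P})=1$ and $d(v)=3$ (that is, $v\in V_{3w}$) or $w_v(\mathcal{P})=3$ and $G=3K_2\vee K_1$. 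As these are precisely the excluded cases, we must have $w_v(\mathcal{P})=0$; but then two nontrivial $v$-star cliques, together with the $2t$ cliques $S^2_{x_i},S^2_{y_i}$, push the wing-clique total to at least $2t+2$. Either way some inland wing vertex receives two cliques of $\mathcal{P}_\mathrm{i}$, contradicting the singleton property.

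The step I expect to be the main obstacle is this last wing-stalk analysis. Making ``the wing cliques are charged only to the $2t+1$ wing vertices'' precise requires care with the rules defining $f_3$, and the two genuine exceptions ($V_{3w}$ and $3K_2\vee K_1$) must be identified with exactly the equality cases of Lemma~\ref{wings}. Keeping the trivial-versus-nontrivial clique bookkeeping straight there, and confirming that when $w_v(\mathcal{P})=0$ each non-stalk wing vertex genuinely consumes a distinct clique $S^2_{x}$, is the delicate part.
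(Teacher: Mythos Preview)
Your proposal is correct and follows essentially the same route as the paper. For (1) the paper likewise sandwiches $|\mathcal{P}_\mathrm{i}|$ using Corollary~\ref{Q-I < V-I} and Lemmas~\ref{least cliques which inland vertices need}--\ref{no critical vertex in W}; for (2) the paper invokes Proposition~\ref{all members of Q(F_C^) are substars of Q(F)} and the identity $|\mathcal{P}|=|\mathcal{P}_\mathrm{i}|+|\mathcal{P}_\mathrm{c}|$ just as you do (your explicit ``disjoint families'' bookkeeping is a mild expansion of the paper's one-line contradiction); and for (3) the paper performs the same split into ``$v$ not a stalk vertex in $W$'' versus ``$v$ a stalk vertex in $W$'', then within the stalk case the same $w_v(\mathcal{P})>0$ / $w_v(\mathcal{P})=0$ dichotomy governed by Lemma~\ref{wings}, concluding via Corollary~\ref{f^{-1}(inland) not empty after delete critical star} that $|\mathcal{P}_\mathrm{i}|>|V_\mathrm{i}|$---which is exactly your ``singleton fibre'' observation phrased contrapositively.
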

\begin{proof}
By Lemmas~\ref{least cliques which inland vertices need} and%
~\ref{no critical vertex in W}, $|\mathcal{P}_\mathrm{i}|\geq
|V_\mathrm{i}|$. By Corollary~\vref{Q-I < V-I} the first statement
holds.

\medskip

\noindent If, for some $i \in [k]$, $\mathcal{P}$ contains more than
$m_i$ cliques induced by $v_i$-stars, then, by the first statement
and Proposition~\vref{all members of Q(F_C^) are substars of Q(F)},
\[|U(\mathcal{S})|=|\mathcal{P}|=|\mathcal{P}_\mathrm{i}|+|\mathcal{P}_\mathrm{c}|>\gamma.\]
This is a contradiction.
Therefore the second statement
holds.

\medskip

\noindent Consider the last statement. Suppose that $G\neq 3K_2\vee
K_1$ and suppose that there are two cliques induced by $v$-stars in
$\mathcal{P}$, for some vertex $v\in V_\mathrm{i}\setminus V_{3w}$.
By the construction of $f$, the set $C_v$ contains these two cliques
unless $v$ is a stalk vertex in $W$.

\medskip

\noindent First assume that $v$ is a stalk vertex in $W$ and that
$w_v(\mathcal{P})> 0$. By Lemma~\vref{wings}, the set $\mathcal{P}$
contains more than $w_v(\mathcal{P})+1$ cliques induced by
$v$-stars.

\medskip

\noindent Now assume that $w_v(\mathcal{P})=0$. The set
$\mathcal{P}$ still contains more than one clique induced by a
$v$-star, since $w_v(\mathcal{P})+1=1$. We conclude that the number
of cliques induced by $v$-wings and $v$-stars is greater than the
number of vertices in the $v$-wings.

\medskip

\noindent Since $v\in V_\mathrm{i}$, all cliques induced by
$v$-wings and $v$-stars are in $\mathcal{P}_\mathrm{i}$.
Consequently, by the construction of $f_3$, $|C_x\cap
\mathcal{P}_\mathrm{i}|\geq 2$ for some vertex $x$ in a $v$-wing.
However, in that case, $|\mathcal{P}_\mathrm{i}|>|V_\mathrm{i}|$ by
Corollary~\vref{f^{-1}(inland) not empty after delete critical
star}. This contradicts the first statement.

\medskip

\noindent Assume that $v$ is not a stalk vertex in $W$. The set
$C_v$ contains two cliques induced by $v$-stars, that is, $|C_v\cap
\mathcal{P}_\mathrm{i}|\geq 2$. This is also a contradiction. This
completes the proof. \qed\end{proof}

\bigskip

\begin{lemma}
\label{no T-triangle} If $G$ is neither $K_4$ nor a $\mathrm{TP}_1$
and $|U(\mathcal{S})|\leq \gamma$, then
$\mathcal{P}_{r}=\es$.\footnote{Recall the definition of
$\mathcal{P}_r$, it's Item~\ref{P_r} on Page~\pageref{P_r}.}
\end{lemma}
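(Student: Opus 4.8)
The plan is to sandwich $|\mathcal{P}_\mathrm{r}|$ between the lower bound $|\mathcal{P}_\mathrm{r}|\geq|R_\mathcal{P}|$ furnished by Lemma~\ref{Q >= R} and a matching upper bound squeezed out of the hypothesis $|U(\mathcal{S})|\leq\gamma$, and then invoke the equality clause of Lemma~\ref{Q >= R}. The whole argument runs through the fibres $C_v=f^{-1}(v)$ of the surjection $f:\mathcal{P}\to V_2(G)$. First I would record what the hypotheses buy through the preceding results: since $G$ is not a $\mathrm{TP}_1$ and $|U(\mathcal{S})|\leq\gamma$, Lemma~\ref{limit on inland vertex and critical star}(1) gives $|\mathcal{P}_\mathrm{i}|=|V_\mathrm{i}|$, and Corollary~\ref{f^{-1}(inland) not empty after delete critical star} gives $C_v\cap\mathcal{P}_\mathrm{i}\neq\es$ for every $v\in V_\mathrm{i}$.

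Next I would sharpen these into an exact count. The fibres $\{C_v\}_{v\in V_2(G)}$ partition $\mathcal{P}$, and $V_2(G)=V_\mathrm{i}\cup V_\mathrm{c}$ is a disjoint union, so
\[|\mathcal{P}_\mathrm{i}|=\sum_{v\in V_2(G)}|C_v\cap\mathcal{P}_\mathrm{i}|\geq\sum_{v\in V_\mathrm{i}}|C_v\cap\mathcal{P}_\mathrm{i}|\geq|V_\mathrm{i}|=|\mathcal{P}_\mathrm{i}|.\]
Every inequality is therefore an equality, and since each summand over $V_\mathrm{i}$ is at least $1$, I conclude $|C_v\cap\mathcal{P}_\mathrm{i}|=1$ for every inland vertex $v$.

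Then I would transport this ``one clique per inland vertex'' bound onto $\mathcal{P}_\mathrm{r}$. A clique of $\mathcal{P}_\mathrm{r}$ is induced by a $K_3$, and by Whitney's theorem (Theorem~\ref{StarOrTriangle}) such a $3$-clique is not induced by any star; in particular it is not a $v$-star clique, so $\mathcal{P}_\mathrm{r}\cap\mathcal{P}_\mathrm{c}=\es$ and $\mathcal{P}_\mathrm{r}\subseteq\mathcal{P}_\mathrm{i}$. Moreover $f$ restricted to $\mathcal{P}_\mathrm{r}$ is the surjection $f_1:\mathcal{P}_\mathrm{r}\to R_\mathcal{P}$, and $R_\mathcal{P}\cap V_\mathrm{c}=\es$ by Proposition~\ref{all members of Q(F_C^) are substars of Q(F)}, so $R_\mathcal{P}\subseteq V_\mathrm{i}$. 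For each $v\in R_\mathcal{P}$ we then have $C_v\cap\mathcal{P}_\mathrm{r}\subseteq C_v\cap\mathcal{P}_\mathrm{i}$, a one-element set, so $|C_v\cap\mathcal{P}_\mathrm{r}|\leq1$. Because $f_1$ sends all of $\mathcal{P}_\mathrm{r}$ into $R_\mathcal{P}$, summation gives $|\mathcal{P}_\mathrm{r}|=\sum_{v\in R_\mathcal{P}}|C_v\cap\mathcal{P}_\mathrm{r}|\leq|R_\mathcal{P}|$. Combined with Lemma~\ref{Q >= R} this forces $|\mathcal{P}_\mathrm{r}|=|R_\mathcal{P}|$, i.e.\ equality in Lemma~\ref{Q >= R}; as $G\neq K_4$, the equality clause leaves only $R_\mathcal{P}=\es$, whence $\mathcal{P}_\mathrm{r}=\es$.

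The main obstacle is less a single hard step than resisting the temptation to argue locally. One could instead fix a putative $v\in R_\mathcal{P}$, show its neighbourhood closes up into a $K_{d(v)+1}$ and count the $\binom{d(v)}{2}$ triangles forced through $v$, but this is fiddly and must be reconciled with where $f_1$ sends those triangles. The counting route above avoids all of that; the only point needing genuine care is the tightness argument that yields $|C_v\cap\mathcal{P}_\mathrm{i}|=1$ \emph{exactly}, since the final sandwich collapses the moment that count is off by one.
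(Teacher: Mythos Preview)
Your argument is correct and is essentially the contrapositive of the paper's own proof. The paper argues by contradiction: assuming $\mathcal{P}_r\neq\es$, it splits into the cases $R_\mathcal{P}\neq\es$ (where Lemma~\ref{Q >= R} gives the strict inequality $|\mathcal{P}_r|>|R_\mathcal{P}|$, hence some fibre $C_v$ with $v\in R_\mathcal{P}$ carries two $\mathcal{P}_r$-cliques) and $R_\mathcal{P}=\es$ (where any clique of $\mathcal{P}_r$ is an unaccounted-for extra), and in either case concludes $|\mathcal{P}_\mathrm{i}|>|V_\mathrm{i}|$, contradicting Lemma~\ref{limit on inland vertex and critical star}(1). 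You run the same counting forwards: from $|\mathcal{P}_\mathrm{i}|=|V_\mathrm{i}|$ and Corollary~\ref{f^{-1}(inland) not empty after delete critical star} you squeeze out $|C_v\cap\mathcal{P}_\mathrm{i}|=1$ for every inland $v$, feed this into $|\mathcal{P}_r|\leq|R_\mathcal{P}|$, and invoke the equality clause of Lemma~\ref{Q >= R}. The ingredients and the logical content are identical; only the packaging differs.

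One small point worth tightening: your line ``$f$ restricted to $\mathcal{P}_\mathrm{r}$ is the surjection $f_1:\mathcal{P}_\mathrm{r}\to R_\mathcal{P}$'' presupposes that $f_1$ exists, which fails precisely in the degenerate situation $R_\mathcal{P}=\es$ while $\mathcal{P}_r\neq\es$. The paper's construction of $f$ is silent on where such cliques go, and its proof handles this with an explicit case split. Your sandwich actually absorbs this case too (any extension of $f$ still yields $\sum_{v\in V_2(G)}|C_v\cap\mathcal{P}_\mathrm{i}|=|\mathcal{P}_\mathrm{i}|$, and a stray $\mathcal{P}_r$-clique would force some inland fibre to size $\geq 2$), but it would be cleaner to say so rather than lean on $f_1$ there.
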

\begin{proof}
Suppose that $\mathcal{P}_{r}\neq \es$. First assume that
$R_\mathcal{P}\neq \es$. By Lemma~\vref{Q
>= R}, $|\mathcal{P}_{r}|>|R_\mathcal{P}|$. By the
construction of $f_1$, this implies that $|C_v\cap
\mathcal{P}_{r}|\geq 2$ for some $v\in R_\mathcal{P}$.

\medskip

\noindent Now assume that $R_\mathcal{P}=\es$, namely, there is no
$f_1$. However, there is at least one clique in $\mathcal{P}_{r}$.
Since $\mathcal{P}_{r}\subset\mathcal{P}_\mathrm{i}$, both cases
imply $|\mathcal{P}_\mathrm{i}|> |V_\mathrm{i}|$ by
Corollary~\ref{f^{-1}(inland) not empty after delete critical star}.
This contradicts the first statement in Lemma~\ref{limit on inland
vertex and critical star}. This proves the lemma. \qed\end{proof}

\bigskip

\begin{lemma}
\label{no semiwing} If $G$ is neither a $W_t$ nor a $\mathrm{TP}_1$
and if $|U(\mathcal{S})|\leq \gamma$ then $\mathcal{P}$ contains no
$K_3$ induced by a semiwing in $G$.
\end{lemma}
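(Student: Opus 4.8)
The plan is to argue by contradiction. Assume that $\mathcal{P}$ contains a clique $C^*$ which is a $K_3$ induced by a semiwing $wuv$ of $G$, where $w$ is the non-stalk vertex (so $d(w)=2$) and $u,v$ are the two stalk vertices (so $d(u),d(v)>2$). Since $G\neq W_t$, Lemma~\ref{semiwing} yields a stalk vertex $x\in\{u,v\}$ for which $\mathcal{P}$ contains at least two nontrivial cliques induced by $x$-stars. As $d(x)>2$, the vertex $x$ lies in $V_2(G)$ and is therefore either inland or critical; I would obtain a contradiction in each case from Lemma~\ref{limit on inland vertex and critical star}, which is available because $G\neq\mathrm{TP}_1$ and $|U(\mathcal{S})|\leq\gamma$.

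Two preliminary facts make the inland case immediate. First, $G\neq 3K_2\vee K_1$: every triangle of $3K_2\vee K_1$ is a wing, so that graph has no semiwing, whereas $G$ has one by assumption. Second, no stalk of a semiwing lies in $V_{3w}$. Indeed, suppose the stalk $x$ were also the stalk of a $3$-wing $\{x,p,q\}$ with $d(p)=d(q)=2$; then $d(x)=3$, while the other semiwing stalk has degree $>2$ and so is a neighbour of $x$ distinct from $p$ and $q$. Hence $\{p,q\}$ together with the other stalk exhausts $N(x)$, forcing the degree-$2$ vertex $w$ to coincide with $p$ or $q$; but then $w$ would be adjacent to $x$, to the other stalk, and to the remaining $3$-wing vertex, contradicting $d(w)=2$. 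Consequently, if $x$ is inland then $x\in V_\mathrm{i}\setminus V_{3w}$, and part~(3) of Lemma~\ref{limit on inland vertex and critical star} (using $G\neq 3K_2\vee K_1$) says $\mathcal{P}$ contains at most one nontrivial clique induced by $x$-stars, contradicting the two such cliques from Lemma~\ref{semiwing}.

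It remains to treat the case where $x=v_i$ is critical, with degree-one neighbours $v_i^1,\dots,v_i^{m_i}$ (so $m_i\geq 1$). Let $z$ denote the other stalk of the semiwing. I would consider the clique $K_{m_i+2}$ of $G^*$ spanned by the edges $v_iw,\,v_iz,\,v_iv_i^1,\dots,v_iv_i^{m_i}$. In $\mathcal{P}$ every pair of these edges is covered by a clique induced by a $v_i$-star, with the single exception of the pair $(v_iw,v_iz)$: since no degree-one neighbour $v_i^j$ lies in a triangle, the semiwing is the only triangle covering an edge inside this clique, and its clique $C^*$ covers $(v_iw,v_iz)$. By part~(2) of Lemma~\ref{limit on inland vertex and critical star} there are exactly $m_i$ cliques induced by $v_i$-stars; restricting them to $K_{m_i+2}$ and adjoining the $2$-clique $\{v_iw,v_iz\}$ cut out by $C^*$ produces an edge-clique partition of $K_{m_i+2}$ with no trivial clique and with more than one but at most $m_i+1$ members. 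As $m_i+2\geq 3$, Corollary~\ref{deBr} forces at least $m_i+2$ members, the desired contradiction.

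I expect this critical case to be the main obstacle. The crux is to isolate the correct $(m_i+2)$-clique fusing the $m_i$ pendant edges at $v_i$ with the two semiwing edges $v_iw$ and $v_iz$, to verify carefully that the semiwing triangle is the unique triangle meeting an internal edge of that clique (so that the restricted $v_i$-stars together with $C^*$ genuinely partition its edges), and only then to invoke the de Bruijn--Erd\H{o}s bound of Corollary~\ref{deBr}. By comparison, the inland case is routine once the observation $x\notin V_{3w}$ has been secured.
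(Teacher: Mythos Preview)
Your argument is correct and mirrors the paper's proof: both invoke Lemma~\ref{semiwing}, use part~(3) of Lemma~\ref{limit on inland vertex and critical star} to force the stalk $x$ into $V_\mathrm{c}$ (your detailed justifications of $x\notin V_{3w}$ and $G\neq 3K_2\vee K_1$ spell out what the paper dismisses as ``clear''), and then apply Corollary~\ref{deBr} to the clique $K_{m_i+2}$ on $\{v_iw,v_iz,v_iv_i^1,\dots,v_iv_i^{m_i}\}$ to contradict part~(2). The only imprecision is the phrase ``with no trivial clique'': some restrictions of the $m_i$ $v_i$-star cliques to $K_{m_i+2}$ could in principle be trivial or empty, but discarding those only decreases the count, so Corollary~\ref{deBr} still gives at least $m_i+2$ members and the contradiction stands.
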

\begin{proof}
Assume that there is a $K_3$ in $\mathcal{P}$ which is induced by a
semiwing $wuv$, with $d(w)=2$. By Lemma~\vref{semiwing},
$\mathcal{P}$ contains two cliques induced by $u$-stars. Clearly,
$u\notin V_{3w}$ and $G\neq 3K_2\vee K_1$. Thus by the third
statement in Lemma~\ref{limit on inland vertex and critical star},
$u\in V_\mathrm{c}$.

\medskip

\noindent
Let $u_1,\ldots ,u_m$ be $u$'s neighbors that have degree one.
By Corollary~\ref{deBr}, $\mathcal{P}$ has at least $m+1$
$u$-stars partitioning
the edges of
\[G^*[\{uu_1,\dots ,uu_m,uv,uw\}]\]
besides the edge $(uv,uw)$. This
contradicts the second statement of Lemma~\ref{limit on inland vertex and
critical star}.

\medskip

\noindent
This proves the lemma.
\qed\end{proof}

\bigskip

\begin{lemma}
\label{no wing} If $G$ is neither $3K_2\vee K_1$ nor a
$\mathrm{TP}_1$ and if $|U(\mathcal{S})|\leq \gamma$, then
$\mathcal{P}$ contains no $K_3$ that is induced by a $v$-wing for
$v\notin V_{3w}$.
\end{lemma}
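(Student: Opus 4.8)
The plan is to argue by contradiction. Suppose $\mathcal{P}$ contains a $K_3$ induced by some $v$-wing with $v\notin V_{3w}$, so that $w_v(\mathcal{P})\geq 1$. Since $v$ is the stalk vertex of a wing we have $d(v)\geq 3$, and since $v\notin V_{3w}$ the wing cannot be a $3$-wing, which forces $d(v)\geq 4$. Because $d(v)\geq 2$ we have $v\in V_2(G)=V_\mathrm{c}\cup V_\mathrm{i}$, so $v$ is either critical or inland, and I would treat these two cases separately.

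First I would dispose of the critical case. If $v\in V_\mathrm{c}$ then, because $v$ lies in a wing, $v\in W$, and hence $v\in V_{cw}$. The hypotheses $G\neq\mathrm{TP}_1$ and $|U(\mathcal{S})|\leq\gamma$ are exactly those of Lemma~\ref{no critical vertex in W}, which therefore gives $w_v(\mathcal{P})=0$, contradicting $w_v(\mathcal{P})\geq 1$.

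For the inland case, suppose $v\in V_\mathrm{i}$, so that $v\in V_\mathrm{i}\setminus V_{3w}$. Since $w_v(\mathcal{P})\geq 1$, Lemma~\ref{wings} guarantees that $\mathcal{P}$ contains at least $w_v(\mathcal{P})+1\geq 2$ cliques induced by $v$-stars; moreover, because $d(v)\geq 4$ and $G\neq 3K_2\vee K_1$ we are in neither equality case of that lemma. On the other hand, the third statement of Lemma~\ref{limit on inland vertex and critical star} applies here, since its hypotheses $G\neq\mathrm{TP}_1$, $G\neq 3K_2\vee K_1$ and $|U(\mathcal{S})|\leq\gamma$ all hold, and it asserts that for $v\in V_\mathrm{i}\setminus V_{3w}$ the partition $\mathcal{P}$ contains at most one nontrivial clique induced by $v$-stars. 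Comparing the two bounds yields the contradiction that finishes the proof.

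The step that needs the most care is lining up the two clique counts, since Lemma~\ref{limit on inland vertex and critical star}(3) bounds only \emph{nontrivial} $v$-star cliques: I must check that the $\geq w_v(\mathcal{P})+1$ cliques supplied by Lemma~\ref{wings} are genuinely nontrivial. This is clear, because the wing $K_3$'s are the ones counted by $w_v(\mathcal{P})$ and are not $v$-star cliques, whereas each of the remaining $v$-star cliques must cover an edge of $G^*$ of the form $(vx,vz)$ joining a wing edge at $v$ to some other edge at $v$ (such an edge exists precisely because $d(v)\geq 4$), and so has at least two vertices. Thus both lemmas refer to the same family of nontrivial $v$-star cliques, and the impossible inequality $2\leq|\{\text{nontrivial }v\text{-star cliques}\}|\leq 1$ is the desired contradiction. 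If one prefers to avoid invoking the equality clause of Lemma~\ref{wings}, the same count of nontrivial $v$-star cliques can instead be obtained directly from Corollary~\ref{deBr}, exactly as in the proof of Lemma~\ref{no critical vertex in W}.
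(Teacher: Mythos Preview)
Your proof is correct and follows essentially the same route as the paper: assume a $v$-wing with $v\notin V_{3w}$ induces a $K_3$ in $\mathcal{P}$, rule out $v\in V_\mathrm{c}$ via Lemma~\ref{no critical vertex in W}, and then for $v\in V_\mathrm{i}\setminus V_{3w}$ combine Lemma~\ref{wings} (giving at least two $v$-star cliques) with the third statement of Lemma~\ref{limit on inland vertex and critical star} to obtain a contradiction. Your write-up is in fact more careful than the paper's, since you explicitly verify that the $v$-star cliques produced by Lemma~\ref{wings} are nontrivial (using $d(v)\geq 4$), a point the paper's terse proof leaves implicit.
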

\begin{proof}
Suppose that some $v$-wing, for some $v\notin V_\mathrm{3w}$ induces
a $K_3$ in $\mathcal{P}$. By Lemma~\ref{no critical vertex in W},
$v\notin V_\mathrm{c}$. By Lemma~\ref{wings}, $\mathcal{P}$ contains
at least two cliques induced by $v$-stars. This contradicts the
third statement in Lemma~\ref{limit on inland vertex and critical
star}. This proves the lemma. \qed\end{proof}

\bigskip

\begin{corollary}
\label{the only triangles in Q(F) are 3-wing} If $G \notin \{K_4,
W_t, 3K_2\vee K_1, \mathrm{TP}_1\}$ and $|U(\mathcal{S})|\leq
\gamma$, then any clique in $\mathcal{P}$ is induced by either a
star or 3-wing in $G$.
\end{corollary}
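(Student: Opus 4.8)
The plan is to show that the only nontrivial cliques in $\mathcal{P}$ come from $v$-stars or $3$-wings, by appealing to Whitney's theorem and ruling out every other possibility with the lemmas just established. By Theorem~\ref{StarOrTriangle}, every clique in $G^*$ is induced either by a star in $G$ or by a $K_3$ in $G$. So it suffices to examine the cliques induced by triangles and argue that any such clique must in fact be induced by a $3$-wing.

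First I would invoke the partition of the nontrivial cliques into $\mathcal{P}_r$, $\mathcal{P}_{nw}$ and $\mathcal{P}_w$ introduced before Lemma~\ref{Q >= R}. A clique induced by a $K_3$ in $G$ falls into one of three categories according to whether that $K_3$ is (i) neither a wing nor a semiwing, (ii) a semiwing, or (iii) a wing. Under the hypothesis $G\notin\{K_4,W_t,3K_2\vee K_1,\mathrm{TP}_1\}$ and $|U(\mathcal{S})|\leq\gamma$, each of the first two categories is empty and the third is restricted. Concretely, Lemma~\ref{no T-triangle} gives $\mathcal{P}_r=\es$, so no clique arises from a $K_3$ of type (i); Lemma~\ref{no semiwing} shows $\mathcal{P}$ contains no $K_3$ induced by a semiwing, killing type (ii); and Lemma~\ref{no wing} shows $\mathcal{P}$ contains no $K_3$ induced by a $v$-wing with $v\notin V_{3w}$, leaving only the $3$-wings among type (iii) triangle-induced cliques.

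Finally I would assemble these facts. Whitney's theorem says each clique is star-induced or triangle-induced. If it is star-induced, we are done. If it is triangle-induced, then the inducing $K_3$ is, by Definition~\ref{df v-stars}, either a wing, a semiwing, or neither; the three lemmas above eliminate the ``neither'' case ($\mathcal{P}_r=\es$), the semiwing case, and the non-$3$-wing wing case, so the inducing triangle must be a $3$-wing. Hence every clique in $\mathcal{P}$ is induced by a star or a $3$-wing, as claimed.

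The one point requiring care is verifying that the exceptional-graph hypotheses of the three cited lemmas are all subsumed by the hypothesis of the corollary. The corollary excludes $K_4$, $W_t$, $3K_2\vee K_1$ and $\mathrm{TP}_1$; Lemma~\ref{no T-triangle} needs $G$ to be neither $K_4$ nor $\mathrm{TP}_1$, Lemma~\ref{no semiwing} needs $G$ to be neither $W_t$ nor $\mathrm{TP}_1$, and Lemma~\ref{no wing} needs $G$ to be neither $3K_2\vee K_1$ nor $\mathrm{TP}_1$, so each lemma's excluded graphs lie among the four excluded here. Thus no obstruction arises and the corollary follows directly by chaining the three lemmas through Whitney's theorem.
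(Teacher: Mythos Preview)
Your proposal is correct and follows precisely the route the paper intends: the corollary is stated without proof immediately after Lemmas~\ref{no T-triangle}--\ref{no wing}, and your argument is exactly the assembly of those three lemmas via Theorem~\ref{StarOrTriangle}, together with the check that the excluded graphs in each lemma are among the four excluded in the corollary. There is nothing to add.
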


\bigskip

\begin{theorem}
\label{LessThanGamma} If $G \notin \{K_4, W_t, 3K_2\vee K_1, \text{a
star}, \mathrm{TP}_1\}$, then
\begin{equation*}
\theta_\mathrm{sd}(G^*)=|V_\mathrm{i}|+\sum_{i=1}^{k}m_i.
\end{equation*}
\end{theorem}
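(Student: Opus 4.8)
The plan is to establish the two inequalities $\theta_\mathrm{sd}(G^*)\le \gamma$ and $\theta_\mathrm{sd}(G^*)\ge \gamma$, where $\gamma=|V_\mathrm{i}|+\sum_{i=1}^{k}m_i$ is the quantity introduced before. Almost all of the work for the lower bound has been front-loaded into the structural lemmas, so the genuinely new ingredient is an explicit construction matching the upper bound. I would first dispose of $G=K_3$ directly: there $G^*=K_3$, every vertex is inland, $\gamma=3$, and $\theta_\mathrm{sd}(K_3)=3$ is Theorem~\ref{SDomega K_n}; so from now on I may assume $G\neq K_3$ and freely use the surjection $f$ and the splitting $\mathcal{P}=\mathcal{P}_\mathrm{i}\cup\mathcal{P}_\mathrm{c}$.

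For the bound $\theta_\mathrm{sd}(G^*)\le \gamma$ I would exhibit one edge-clique partition $\mathcal{P}$ of $G^*$ with $|\mathcal{P}|=\gamma$ whose $\EGP$-image is simple and distinct; since $|U(\EGP(\mathcal{P}))|=|\mathcal{P}|$, this suffices. By Whitney's theorem (Theorem~\ref{StarOrTriangle}) every clique of $G^*$ comes from a star or a $K_3$ of $G$, and the family of all saturated stars already partitions $E(G^*)$ using $|V_2(G)|$ cliques. Starting from this all-star partition, the only failure of distinctness is that the $m_i$ leaf edges at a critical vertex $v_i$ all collapse to the same singleton set, whereas every edge joining two vertices of degree at least two already carries a distinct two-element set. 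To repair this I would, at each $v_i$, replace the single saturated $v_i$-star by the pattern $\SP(K_{m_i})$ applied to its leaf edges: keep the full $v_i$-star as one clique $C_1$ and adjoin $m_i-1$ trivial cliques, one on each of $m_i-1$ of the leaf edges. This uses exactly $m_i$ cliques at $v_i$, the leaf edges receive the pairwise distinct sets $\{C_1\},\{C_1,C_2\},\dots,\{C_1,C_{m_i}\}$ meeting in the single common element $C_1$, and $C_1$ still covers every pair of star edges at $v_i$, so the partition remains valid and simple. Summing over all vertices gives $|\mathcal{P}|=|V_\mathrm{i}|+\sum_{i=1}^{k}m_i=\gamma$, and the optimality of the count $m_i$ on the leaf edges is precisely $\theta_\mathrm{sd}(K_{m_i})=m_i$ from Theorem~\ref{SDomega K_n}.

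For the bound $\theta_\mathrm{sd}(G^*)\ge \gamma$ I would take a minimum $\mathcal{S}\in F_\mathrm{sd}(G^*)$ and put $\mathcal{P}=\EGP(\mathcal{S})$, so that $\theta_\mathrm{sd}(G^*)=|\mathcal{P}|$. Since the previous paragraph already gives $|U(\mathcal{S})|\le \gamma$ and $G$ avoids every excluded graph, the hypothesis $|U(\mathcal{S})|\le \gamma$ of the preparatory lemmas is in force. Then Lemma~\ref{no critical vertex in W} gives $w_v(\mathcal{P})=0$ for every $v\in V_{cw}$, so Lemma~\ref{least cliques which inland vertices need} collapses to $|\mathcal{P}_\mathrm{i}|\ge |V_\mathrm{i}|$; and Proposition~\ref{all members of Q(F_C^) are substars of Q(F)} supplies at least $m_i$ cliques induced by $v_i$-stars for each critical $v_i$, all of which lie in $\mathcal{P}_\mathrm{c}$, whence $|\mathcal{P}_\mathrm{c}|\ge \sum_{i=1}^{k}m_i$. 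As $\mathcal{P}_\mathrm{i}$ and $\mathcal{P}_\mathrm{c}$ partition $\mathcal{P}$, adding the two estimates yields $|\mathcal{P}|\ge \gamma$, i.e. $\theta_\mathrm{sd}(G^*)\ge \gamma$. Combining the two inequalities gives the stated equality.

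The hard part will be the construction in the upper bound: one must verify simultaneously that the all-star/silly-partition hybrid is a genuine edge-clique partition of $G^*$, that its $\EGP$-image is both simple and distinct, and that no two edges accidentally receive equal sets, the delicate comparisons being leaf edges against internal edges and the behaviour at wings and semiwings. Here I expect Corollary~\ref{the only triangles in Q(F) are 3-wing}, which says that under $|U(\mathcal{S})|\le \gamma$ every clique of $\mathcal{P}$ is induced by a star or a $3$-wing, to be the precise structural fact licensing a purely star-based partition and thereby keeping the count at exactly $\gamma$.
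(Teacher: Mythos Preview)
Your argument is correct and reaches the result, but it is organised rather differently from the paper. You separate the two inequalities cleanly: the upper bound by exhibiting the all-saturated-stars-plus-trivial-cliques partition and verifying directly that its $\EGP$-image is simple and distinct, and the lower bound by the short count $|\mathcal{P}|=|\mathcal{P}_\mathrm{i}|+|\mathcal{P}_\mathrm{c}|\ge |V_\mathrm{i}|+\sum_i m_i$ using only Lemma~\ref{least cliques which inland vertices need}, Lemma~\ref{no critical vertex in W} and Proposition~\ref{all members of Q(F_C^) are substars of Q(F)}. The paper instead runs the full structural analysis (Lemma~\ref{limit on inland vertex and critical star}, Lemmas~\ref{no T-triangle}--\ref{no wing}, Corollary~\ref{the only triangles in Q(F) are 3-wing}) to prove that \emph{every} $\mathcal{S}$ with $|U(\mathcal{S})|\le\gamma$ must look exactly like your construction, up to the binary choice at each $3$-wing; it then reads off $\theta_\mathrm{sd}$ from that description. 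In particular, the paper spends a paragraph ruling out the $\NP$ and $\PP$ patterns for $\EGP(\mathcal{S}_i)$ at a critical vertex via the neighbour $z$ of $v_i$ with $d(z)\ge 2$, an argument your approach does not need at all. Your route is more economical for this theorem in isolation; the paper's longer route pays off in Theorem~\ref{line graph m.s.d.-rep.}, where the complete classification of optima is exactly what is needed to obtain $\tau_\mathrm{sd}(G^*)=2^{|V_\mathrm{3w}|}$.

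One small misstatement in your final paragraph: Corollary~\ref{the only triangles in Q(F) are 3-wing} is not what ``licenses'' your star-based construction---that construction is a concrete object you check by hand, and its validity needs nothing from the structural lemmas. The corollary plays the opposite role: in the paper's argument it constrains an \emph{arbitrary} optimum to be star-based, which is the converse direction to yours.
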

\begin{proof}
Recall that $|V_\mathrm{i}|+\sum_{i=1}^{k}m_i=\gamma$. First
consider a vertex $v\in V_\mathrm{i}$ which is not in a 3-wing. By
the third statement in Lemma~\ref{limit on inland vertex and
critical star}, the set $\mathcal{P}$ contains at most one clique
induced by $v$-star. Thus if $\mathcal{P}$ does not contain the
clique induced by the saturated $v$-star, then it contains a clique
induced by a triangle that contains $v$. By Corollary~\ref{the only
triangles in Q(F) are 3-wing}, $v$ is in a 3-wing, a contradiction.
Thus for each $v\in V_\mathrm{i}$ which is not in a 3-wing,
$\mathcal{P}$ contains the clique induced by the saturated $v$-star.

\medskip

\noindent Now consider a vertex $v_i\in V_\mathrm{c}$ for $i \in
[k]$. Since $G$ is not a star, $v_i$ has a neighbor, say $z$, with
$d(z)\geq 2$. By the second statement in Lemma~\ref{limit on inland
vertex and critical star} and Proposition~\ref{all members of
Q(F_C^) are substars of Q(F)}, $|EGP(\mathcal{S}_i)|=m_i$.
Therefore, by Remark~\ref{remark SD}, $\EGP(\mathcal{S}_i)$ is an
$\NP$, a $\PP$, or an edge-clique cover containing a $K_{m_i}$ and
$m_i-1$ trivial cliques. For the first two cases, since $v_iz$ is
adjacent to all of $v_iv_i^1,\ldots ,v_iv_i^{m_i}$ in $G^*$,
$S(v_iz)$ contains at least two elements, say $a$ and $b$, in
$U(\mathcal{S}_i)$. However, by definition of $\PP$, any two cliques
in an $\NP$ or $\PP$ intersect at exactly one vertex. Suppose the
two cliques in $\EGP(\mathcal{S}_i)$ corresponding to $a$ and $b$ in
the $\EGP$ bijection intersect at a vertex $v_iv_i^{\ell}$ in $G^*$.
This means that $a,b\in S(v_iv_i^{\ell})$ and $|S(v_iz)\cap
S(v_iv_i^{\ell})|\geq 2$, a contradiction. Thus we conclude that,
for each $v_i\in V_\mathrm{c}$, $\mathcal{P}$ contains the clique
induced by the saturated $v_i$-star and $m_i-1$ trivial cliques
induced by $\{v_iv_i^1\},\ldots ,\{v_iv_i^{m_i}\}$.

\medskip

\noindent
We conclude that, if $|U(\mathcal{S})|\leq \gamma$, then
$\mathcal{P}$ consists of $\sum_{i=1}^k (m_i-1)$ trivial cliques and
$|V_2(G)|$ cliques induced by saturated $v$-stars for all $v\in
V_2(G)$ except that, for each 3-wing, say $vxy$, with $d(v)=3$ in
$G$, $\mathcal{P}$ may contain either a $K_3$ induced by $vxy$ and
two $K_2$'s induced by $v$-stars or three cliques induced by
saturated $v$-, $x$- and $y$-stars, respectively.

\medskip

\noindent Accordingly, consider an $\mathcal{S}$ with its
corresponding $\mathcal{P}$ as described above. Every
$S(uv)\in\mathcal{S}$ contains two elements corresponding to two
cliques in $\mathcal{P}$ where both $d(u)$ and $d(v)$ are greater
than or equal to 2 and $uv\in E(G)$ does not connect the two
non-stalk vertices of a wing. Clearly, $S(uv)$ is the unique set in
$\mathcal{S}$ containing these two elements. Due to the elements in
$U(\mathcal{S})$ corresponding to trivial cliques in $\mathcal{P}$,
$S(vu_1)\neq S(vu_2)$ for any pair of vertices $vu_1$ and $vu_2$ in
$G^*$ with $d(u_1)=d(u_2)=1$. Thus $\mathcal{S}\in
F_\mathrm{sd}(G^*)$. Hence
\[\theta_{sd}(G^*)=|V_2(G)|+\sum_{i=1}^k
(m_i-1)=|V_\mathrm{i}|+|V_\mathrm{c}|+\sum_{i=1}^k (m_i-1)=\gamma.\]
This completes the proof.
\qed\end{proof}

\bigskip

\begin{theorem}
\label{line graph m.s.d.-rep.} The type of $F_{sd}(G^*)$ is
\[\tau_{sd}(G^*)=
\begin{cases}
2 & \text{if $G$ is $K_4$, $W_t$, or a $\mathrm{TP}_1$,}\\
3 & \text{if $G$ is $3K_2\vee K_1$,}\\
2+N_\mathrm{PP}(d(v), r) &
\text{if $G$ is a $v$-star with $d(v)\geq 3$,}\\
2^{|V_\mathrm{3w}|} & \mbox{otherwise.}
\end{cases}\]
\end{theorem}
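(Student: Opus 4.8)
The plan is to follow the four cases of the statement, in each case reusing the structural description of a minimum simple-distinct representation that was extracted in the proof of Theorem~\ref{LessThanGamma}. The easiest case is the $v$-star: if $G=K_{1,d(v)}$ then every edge of $G$ meets every other edge at the centre $v$, so $G^*=K_{d(v)}$, whence $F_{sd}(G^*)=F_{sd}(K_{d(v)})$ and Theorem~\ref{SDomega K_n} immediately gives $\tau_{sd}(G^*)=2+N_{\PP}(d(v),r)$.

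For the generic ("otherwise") case I would start from the conclusion of Theorem~\ref{LessThanGamma}, which says that once $|U(\mathcal{S})|\le\gamma$ the partition $\mathcal{P}=\EGP(\mathcal{S})$ is completely forced except at each $3$-wing $vxy$ (with $d(v)=3$ and third neighbour $w$), where $\mathcal{P}$ either (A) contains the three saturated stars at $v,x,y$, or (B) contains the triangle $\{vx,vy,xy\}$ together with the two $K_2$'s $\{vx,vw\}$ and $\{vy,vw\}$. By Corollary~\ref{the only triangles in Q(F) are 3-wing} no other triangles occur, so these local options are the only freedom and they are independent across the $|V_{\mathrm{3w}}|$ wings; this produces exactly $2^{|V_{\mathrm{3w}}|}$ admissible partitions. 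To separate them into isomorphism classes I would attach to each wing the local invariant $|S(xy)|$, where $xy\in V(G^*)$ is the edge joining the two non-stalk vertices: option (A) yields $|S(xy)|=2$, while option (B) yields $|S(xy)|=1$ (the three sets $S(vx),S(vy),S(xy)$ then sharing the single element that codes the triangle). Since any isomorphism of representations of $G^*$ is induced by a universe bijection together with an automorphism of $G^*$, and by Whitney's theorem (Theorem~\ref{StarOrTriangle}) such an automorphism carries stars to stars and triangles to triangles, the (A)/(B) dichotomy at a wing is an intrinsic feature of $\mathcal{P}$; counting the $2^{|V_{\mathrm{3w}}|}$ choices then gives $\tau_{sd}(G^*)=2^{|V_{\mathrm{3w}}|}$.

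The remaining work is the four exceptional graphs, where neither $\theta_{sd}(G^*)=\gamma$ nor the wing dichotomy is available and one must enumerate $F_{sd}(G^*)$ directly. For $K_4$ one has $G^*=K_{2,2,2}$ with $\theta_{sd}=4$; by Theorem~\ref{StarOrTriangle} every clique of $G^*$ comes from a star or a triangle of $K_4$, so a minimum partition is either the four vertex-stars or the four triangular faces, and because an automorphism of $G^*$ preserves the star/triangle distinction these two are non-isomorphic, giving $\tau=2$. The graphs $W_t$ and $\mathrm{TP}_1$ are treated the same way---each forces the saturated-star partition plus exactly one competing partition built from its semiwings (for $W_t$) or from its critical wing (for $\mathrm{TP}_1$)---while for $3K_2\vee K_1$ the degree-$6$ centre carries three wings with $w_v(\mathcal{P})=3$ (the equality case of Lemma~\ref{wings}), and a direct count of its admissible edge-clique partitions of $G^*$ yields three classes.

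I expect the genuine difficulty to be the rigidity step in the generic case: one must rule out an automorphism of $G^*$ that \emph{permutes} distinct $3$-wings and thereby conflates two different choice-vectors (for instance when $G$ has two symmetric $3$-wings attached at a common neighbour). Establishing that the per-wing invariant is preserved---so that all $2^{|V_{\mathrm{3w}}|}$ choices really fall into distinct classes rather than into $\mathrm{Aut}(G^*)$-orbits---is the crux, and it is the analogue of the clean global rigidity that makes the $K_4$ count work. The secondary burden is the careful, graph-by-graph bookkeeping for the four exceptional families, where the convenient $\gamma$-bound is unavailable and each competing minimum partition must be exhibited and checked against Theorem~\ref{StarOrTriangle} by hand.
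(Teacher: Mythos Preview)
Your plan tracks the paper's proof closely: the $v$-star case via Theorem~\ref{SDomega K_n}, the generic case via the structural description extracted in the proof of Theorem~\ref{LessThanGamma}, and the exceptional graphs by direct inspection. The paper is in fact terser than you on $K_4$, $W_t$, and $3K_2\vee K_1$ (it merely asserts ``it is easy to verify''), but it is more explicit on $\mathrm{TP}_1$: there it splits according to whether $\mathcal{P}$ contains the $K_3$ induced by the unique wing $vxy$ and, in the affirmative branch, applies Corollary~\ref{deBr} to the clique of $G^*$ induced by the saturated $v$-star with the edge $(vx,vy)$ deleted, forcing the $m+1$ star-cliques together with $\{vx,vy\}$ into a near-pencil. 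Your one-line sketch of the $\mathrm{TP}_1$ case should be expanded to match this.

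There is, however, a real slip in your handling of isomorphism. For $K_4$ you assert that ``an automorphism of $G^*$ preserves the star/triangle distinction,'' citing Theorem~\ref{StarOrTriangle}; but that theorem only says every clique of $G^*$ comes from a star or a triangle of $G$, not that automorphisms of $G^*$ respect the dichotomy. Indeed $K_4^*$ is the octahedron $K_{2,2,2}$, and its orientation-reversing symmetries swap the four ``star'' faces with the four ``triangle'' faces, so your non-isomorphism claim does not follow from the reason you give. The same issue underlies the difficulty you correctly flag in the generic case: your invariant $|S(xy)|\in\{1,2\}$ distinguishes option~(A) from option~(B) at a single $3$-wing, but across several wings it only pins down the \emph{multiset} of choices, not the labelled vector, and hence does not by itself separate $(A,B)$ from $(B,A)$ when two $3$-wings are exchanged by a symmetry of $G$. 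The paper's ``follows directly from the proof of Theorem~\ref{LessThanGamma}'' glosses over this point as well, so you are not missing an obvious trick; but as written neither argument actually establishes that all $2^{|V_{\mathrm{3w}}|}$ partitions are pairwise non-isomorphic.
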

\begin{proof}
It is easy to verify that
\[\tau_{sd}(G^*)=
\begin{cases}
2 &
\text{if $G$ is $K_4$ or $W_t$,}\\
3 &
\text{if
$G=3K_2\vee K_1$.}
\end{cases}\]
By Theorem~\vref{SDomega K_n}, $\tau_{sd}(G^*)=2+N_{\PP}(d(v),r)$
when $G$ is a $v$-star with $d(v)\geq 3$. The `otherwise'-statement
follows directly from the proof of Theorem~\ref{LessThanGamma}.

\medskip

\noindent It remains to prove that $\tau_{sd}(G^*)=2$ when $G$ is a
TP$_1$. Let $G$ be a TP$_1$ that is not $K_3$. There are two classes
of $sd$-set representations for $G$, depending on whether
$\mathcal{P}$ contains a $K_3$ induced by a wing or not.

\medskip

\noindent First, we consider the case where $\mathcal{P}$ contains a
$K_3$ induced by a wing. Let $vxy$ be such a $v$-wing and let $G'$
be the subgraph of $G^*$ induced by the saturated $v$-star but
without the edge $(vx,vy)$. By Corollary~\vref{deBr}, $\mathcal{P}$
contains at least $m+1$ cliques in $G'$ which are induced by
$v$-stars, where
\[m=|\{u: u \in V(G) \text{ and } d(u)=1\}|.\]
The case where $\mathcal{P}$ contains exactly $m+1$ cliques induced
by $v$-stars occurs only when the $m+1$ cliques together with
$\{vx,vy\}$, form an $\NP$. Thus, in this case, $\tau_{sd}(G^*)=2$.

\medskip

\noindent Consider the case where $\mathcal{P}$ does not contain a
$K_3$ induced by a wing. By using an argument similar to the one in
Theorem~\ref{LessThanGamma}, we also easily derive
$\tau_{sd}(G^*)=2$. This completes the proof. \qed\end{proof}

\section{The types $\tau_{sa}(G^*)$ and
$\tau_{sdu}(G^*)$}
\label{thetaF_sa,su,sdu}

In this section, we assume that $\mathcal{S}\in F_{sa}(G^*)$. The
set $S(uv)$ denotes the set in the set representation which is
assigned to $uv\in V(G^*)$ with respect to $\mathcal{S}$. Recall
that $\mathcal{S}_i=\{S(v_iv_i^1), S(v_iv_i^2), \ldots,
S(v_iv_i^{m_i})\}$, where $v_i\in V_\mathrm{c}$. Let
$\mathcal{P}=\EGP(\mathcal{S})$ and let
$\gamma'=|V_\mathrm{i}|+\sum_{i=1}^{k}(m_i+1)$.

\bigskip

\begin{lemma}
\label{Q-IA < V-IA} If $G$ is not a star and $|U(\mathcal{S})|\leq
\gamma'$, then $|\mathcal{P}_\mathrm{i}|\leq |V_\mathrm{i}|$.
\end{lemma}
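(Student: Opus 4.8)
The plan is to mirror the proof of Corollary~\ref{Q-I < V-I}, replacing the bound $\theta_{sd}(K_{m_i})=m_i$ used there by a sharper, antichain-driven bound. Writing $\mathcal{P}=\EGP(\mathcal{S})$ and using $|U(\mathcal{S})|=|\mathcal{P}|=|\mathcal{P}_\mathrm{i}|+|\mathcal{P}_\mathrm{c}|$, I will argue by contraposition: assuming $|\mathcal{P}_\mathrm{i}|>|V_\mathrm{i}|$, it suffices to show that $|\mathcal{P}_\mathrm{c}|\geq\sum_{i=1}^k(m_i+1)$, since then $|U(\mathcal{S})|>|V_\mathrm{i}|+\sum_{i=1}^k(m_i+1)=\gamma'$, contradicting the hypothesis $|U(\mathcal{S})|\leq\gamma'$.

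The heart of the argument is the claim that $|U(\mathcal{S}_i)|\geq m_i+1$ for every $v_i\in V_\mathrm{c}$. Because $G$ is connected and is not a star, the critical vertex $v_i$ must have a neighbor of degree at least $2$, that is $d(v_i)>m_i$, so the edge $v_iv_i^{m_i+1}$ exists. The $m_i+1$ edges $v_iv_i^1,\dots,v_iv_i^{m_i},v_iv_i^{m_i+1}$ pairwise meet at $v_i$ and hence induce a $K_{m_i+1}$ in $G^*$. I then adjoin to $\mathcal{S}_i$ the trace $S'=S(v_iv_i^{m_i+1})\cap U(\mathcal{S}_i)$ and check that $\{S(v_iv_i^1),\dots,S(v_iv_i^{m_i}),S'\}$ is a simple, distinct set representation of this $K_{m_i+1}$ whose universe is exactly $U(\mathcal{S}_i)$. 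The crucial point is that every edge of the $K_{m_i+1}$ is incident to one of the degree-one edges $v_iv_i^j$; by Whitney's theorem (Theorem~\ref{StarOrTriangle}) the clique of $\mathcal{P}$ witnessing any such intersection is a $v_i$-star covering a degree-one edge, and therefore lies in $U(\mathcal{S}_i)$, so $S'$ already meets each $S(v_iv_i^j)$. Distinctness follows from the antichain property: if $S'=S(v_iv_i^j)$ then $S(v_iv_i^j)\subseteq S(v_iv_i^{m_i+1})$, contradicting that $\mathcal{S}$ is an antichain. Applying Theorem~\ref{SDomega K_n} then gives $|U(\mathcal{S}_i)|\geq\theta_{sd}(K_{m_i+1})=m_i+1$.

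It remains to sum these bounds without double counting. Each clique of $U(\mathcal{S}_i)$ covers some degree-one edge $v_iv_i^j$, and since $v_i^j$ has degree one, Whitney's theorem forces that clique to be a $v_i$-star; a single clique cannot simultaneously cover degree-one edges at two distinct critical vertices, so the sets $U(\mathcal{S}_1),\dots,U(\mathcal{S}_k)$ are pairwise disjoint and all contained in $\mathcal{P}_\mathrm{c}$. Hence $|\mathcal{P}_\mathrm{c}|\geq\sum_{i=1}^k|U(\mathcal{S}_i)|\geq\sum_{i=1}^k(m_i+1)$, which closes the contrapositive. The main obstacle is precisely obtaining the extra ``$+1$'' per critical vertex that distinguishes $\gamma'$ from the quantity $\gamma$ of Corollary~\ref{Q-I < V-I}: the naive estimate coming from $\mathcal{S}_i$ being a simple distinct representation of $K_{m_i}$ yields only $m_i$. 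Overcoming this requires bringing the higher-degree neighbor $v_i^{m_i+1}$ into play (this is exactly where ``$G$ is not a star'' is used), together with the observation that, because $v_iv_i^{m_i+1}$ is adjacent to every degree-one edge, the representation of the enlarged clique $K_{m_i+1}$ still lives entirely inside $U(\mathcal{S}_i)$; the antichain hypothesis is then what prevents the adjoined set $S'$ from collapsing onto one of the $S(v_iv_i^j)$.
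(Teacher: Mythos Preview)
Your argument is correct and actually somewhat cleaner than the paper's. Both proofs reduce to establishing $|U(\mathcal{S}_i)|\geq m_i+1$ for every $v_i\in V_\mathrm{c}$, and then combine this with Proposition~\ref{all members of Q(F_C^) are substars of Q(F)} (which already gives $U(\mathcal{S}_i)\subseteq\mathcal{P}_\mathrm{c}$ and pairwise disjointness) exactly as you do. The difference lies in how the bound $|U(\mathcal{S}_i)|\geq m_i+1$ is obtained.

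The paper proceeds by case analysis on $m_i$. For $m_i=1$ it observes directly that $|S(v_iv_i^1)|\geq 2$, since otherwise $S(v_iv_i^1)\subseteq S(v_iz)$ would violate the antichain condition. For $m_i=2$ it quotes $\theta_{sa}(K_2)=3$. For $m_i\geq 3$ it assumes $|U(\mathcal{S}_i)|=m_i$, invokes Theorem~\ref{SAomega K_n} to conclude that $\EGP(\mathcal{S}_i)$ is an $\NP$ or a $\PP$, and then borrows the structural contradiction from the proof of Theorem~\ref{LessThanGamma} (two elements of $U(\mathcal{S}_i)$ forced into $S(v_iz)$, which then violates simplicity). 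Your approach avoids this trichotomy entirely: by adjoining $S'=S(v_iv_i^{m_i+1})\cap U(\mathcal{S}_i)$ you manufacture an sd-representation of $K_{m_i+1}$ whose universe is still $U(\mathcal{S}_i)$, and Theorem~\ref{SDomega K_n} gives the bound uniformly in $m_i$. The antichain hypothesis enters only once, to show $S'\neq S(v_iv_i^j)$. This is a genuinely more elementary route and has the pleasant feature that it invokes neither the classification of $F_{sa}(K_n)$ nor the forward reference to the argument of Theorem~\ref{LessThanGamma}.

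One small remark: your appeal to Whitney's theorem in showing that $S'$ meets each $S(v_iv_i^j)$ is superfluous. The unique element of $S(v_iv_i^j)\cap S(v_iv_i^{m_i+1})$ already lies in $S(v_iv_i^j)\subseteq U(\mathcal{S}_i)$, so it lies in $S'$ automatically; no analysis of which clique of $\mathcal{P}$ it names is needed there. Whitney's theorem (equivalently, Proposition~\ref{all members of Q(F_C^) are substars of Q(F)}) is only needed for the disjointness step at the end.
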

\begin{proof}
Assume $|\EGP(\mathcal{S}_i)|=m_i\geq 3$ for some $i \in [k]$. By
Theorem~\ref{SAomega K_n}, $\EGP(\mathcal{S}_i)$ is an $\NP$ or
$\PP$. An argument similar as in Theorem~\ref{LessThanGamma} gives a
contradiction. Thus we only need to consider $m_i\in \{1,2\}$.

\medskip

\noindent If $m_i=1$ for some $i \in [k]$, then
$|\EGP(\mathcal{S}_i)|\geq 2$; otherwise we would have that, for any
$z$ adjacent to $v_i$ with $d(z) \geq 2$, $S(v_iv_i^1)\subset
S(v_iz)$, which is a contradiction.

\medskip

\noindent Assume $m_i=2$ for some $i \in [k]$. We can obtain that
$|\EGP(\mathcal{S}_i)|\geq 3$ since $\theta_{sa}(K_2)=3$. As a
consequence, $|\EGP(\mathcal{S}_i)|\geq m_i+1$ for $i \in [k]$. By
Proposition~\vref{all members of Q(F_C^) are substars of Q(F)} and
the fact that
$|U(\mathcal{S})|=|\mathcal{P}_\mathrm{i}|+|\mathcal{P}_\mathrm{c}|$,
we obtain $|\mathcal{P}_\mathrm{i}|>|V_\mathrm{i}|$. This implies
that $|U(\mathcal{S})|> \gamma'$ which is a contradiction. This
completes the proof. \qed\end{proof}

\bigskip

Assume $G \neq K_3$.
In the following, we construct another surjection
\[f':\mathcal{P}\rightarrow V_2(G).\]
The construction goes by the same rules as in Section 4, except that
the construction of $f_3$ is slightly modified.

\bigskip

If $\mathcal{P}$ contains a $K_3$ induced by a wing $vxy$, say with
stalk vertex $v$ in $G$, then $S(xy)$ must contain a
monopolist.\footnote{A monopolist was defined in Definition~\vref{df
monopolist}.} In this case, in constructing $f_3$, assign the
trivial clique in $\mathcal{P}$ which corresponds to the monopolist
to one of $x$ and $y$, and the $K_3$ in $\mathcal{P}$ induced by
$vxy$ to the other of $x$ and $y$. Let $f'_3$ be the modified $f_3$.
Based on the adjustment, we have the following lemma.

\begin{lemma}
\label{f^{-1}(inland) not empty after delete critical star--s.a.} If
$G\neq K_3$, then, for all $v \in V_\mathrm{i}$, $C_v \cap
\mathcal{P}_\mathrm{i}\neq \es$ and $|\mathcal{P}_\mathrm{i}|\geq
|V_\mathrm{i}|$.
\end{lemma}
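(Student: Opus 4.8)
The plan is to mirror the structure of Lemma~\ref{least cliques which inland vertices need} from Section~\ref{thetaF_sd}, adapting it to the modified surjection $f'$ and its component $f'_3$. The quantity $|\mathcal{P}_\mathrm{i}|\geq |V_\mathrm{i}|$ will follow once I establish the local statement that $C_v\cap\mathcal{P}_\mathrm{i}\neq\es$ for every inland vertex $v$, since the sets $C_v$ for distinct $v$ partition $\mathcal{P}$, so that at least one clique of $\mathcal{P}_\mathrm{i}$ is charged to each of the $|V_\mathrm{i}|$ inland vertices. Thus the core of the argument is the first assertion; the counting conclusion is then immediate.

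First I would fix $v\in V_\mathrm{i}$ and split into the same three cases used throughout Section~\ref{thetaF_sd}, according to whether $v\in R_\mathcal{P}$, $v\in NW_\mathcal{P}$, or $v$ is a stalk vertex in $W$. For $v\in R_\mathcal{P}\cup NW_\mathcal{P}$ the argument is verbatim that of Corollary~\ref{f^{-1}(inland) not empty after delete critical star}: by the construction of $f_1$ and $f_2$ the set $C_v$ contains a clique induced by a triangle or a nontrivial $v$-star, and since $v$ is inland this clique lies in $\mathcal{P}_\mathrm{i}$. The only genuinely new case is $v\in V_\mathrm{i}\cap W$, because $f_3$ has been replaced by $f'_3$.

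The hard part will be handling a stalk vertex $v$ of a wing $vxy$ that induces a $K_3$ in $\mathcal{P}$, where $f'_3$ now routes the monopolist trivial clique (from $S(xy)$) to one of $x,y$ and the wing-$K_3$ to the other. I would argue that, since $v$ is inland, the two non-stalk vertices $x,y$ of the wing are also inland (their only neighbors in $G$ are $v$ and each other, both of degree $\geq 2$), so both the wing-$K_3$ and the $S^2$-cliques involved are nontrivial cliques of $\mathcal{P}$ belonging to $\mathcal{P}_\mathrm{i}$. Whichever of $x$ and $y$ receives the wing-$K_3$ therefore has $C\cap\mathcal{P}_\mathrm{i}\neq\es$; and the stalk vertex $v$ itself is charged at least one $v$-star clique, which is likewise nontrivial and in $\mathcal{P}_\mathrm{i}$. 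When the wing instead induces two $2$-cliques $S_x^2, S_y^2$, the reasoning is exactly as in Corollary~\ref{f^{-1}(inland) not empty after delete critical star}. In every subcase the clique assigned to each inland vertex of the wing is nontrivial and inland, giving $C_v\cap\mathcal{P}_\mathrm{i}\neq\es$.

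Having shown $C_v\cap\mathcal{P}_\mathrm{i}\neq\es$ for all $v\in V_\mathrm{i}$, I would finish with the counting step:
\begin{equation*}
|\mathcal{P}_\mathrm{i}|\;\geq\;\bigl|\{v\in V_\mathrm{i}: C_v\cap\mathcal{P}_\mathrm{i}\neq\es\}\bigr|\;=\;|V_\mathrm{i}|,
\end{equation*}
where the inequality uses that the $C_v$ are pairwise disjoint and their intersections with $\mathcal{P}_\mathrm{i}$ therefore charge distinct cliques to distinct inland vertices. The subtlety I expect to have to check carefully is that the monopolist rerouted by $f'_3$ lands on a vertex ($x$ or $y$) that is \emph{not} the one simultaneously receiving the wing-$K_3$, so that neither inland vertex of the wing is left with only its trivial clique; this is guaranteed precisely by the ``assign to the other'' rule in the definition of $f'_3$, and it is the one place where the modified surjection genuinely differs from the original.
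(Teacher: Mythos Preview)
Your approach is essentially the paper's own: split $v\in V_\mathrm{i}$ into the cases $R_\mathcal{P}$, $NW_\mathcal{P}$, $W$, and observe that the surjections $f_1,f_2,f'_3$ assign each such $v$ a clique lying in $\mathcal{P}_\mathrm{i}$; the inequality then follows from disjointness of the fibres $C_v$. The paper's proof is a three-line version of exactly this.

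Two small points are worth tightening. First, you phrase the $W$-case as ``handling a stalk vertex $v$ of a wing $vxy$'' and then assume $v$ is inland. But the inland vertex under consideration may instead be one of the non-stalk vertices $x,y$ of a wing whose stalk vertex lies in $V_{cw}$. Fortunately your argument for $x,y$ does not actually use that the stalk is inland: $x$ and $y$ have degree~$2$ with neighbours $v$ and each other, so they are inland regardless, and the cliques $f'_3$ assigns to them (the wing-$K_3$, or $S_x^2,S_y^2$, or the monopolist trivial clique $\{xy\}$) are all in $\mathcal{P}_\mathrm{i}$ because none is a star centred at a critical vertex.

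Second, your closing ``subtlety'' is stated backwards. The danger is not that some non-stalk vertex is ``left with only its trivial clique''; the whole point of $f'_3$ is precisely to \emph{give} that vertex the monopolist trivial clique $\{xy\}$, which does lie in $\mathcal{P}_\mathrm{i}$ (since $x,y\in V_\mathrm{i}$). What $f'_3$ avoids is the original $f_3$'s default, in which the second non-stalk vertex would receive one of the $v$-star cliques---and those may sit in $\mathcal{P}_\mathrm{c}$ when the stalk $v$ is critical. This is why the present lemma needs no hypothesis like $|U(\mathcal{S})|\le\gamma$, whereas Corollary~\ref{f^{-1}(inland) not empty after delete critical star} did.
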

\begin{proof}
For every $v\in V_\mathrm{i} \cap R_{\mathcal{P}}$, $C_v$ is
assigned a clique from $\mathcal{P}_\mathrm{i}$ by $f_1$, since
$f_1$ is surjective. Similarly, for every $v \in V_\mathrm{i} \cap
NW_{\mathcal{P}}$, $C_v$ is assigned a clique from
$\mathcal{P}_\mathrm{i}$ by $f_2$ and for every $v \in V_i \cap W$,
$C_v$ is assigned a clique from $\mathcal{P}_i$ by $f'_3$.

\medskip

\noindent
That proves the lemma.
\qed\end{proof}

\begin{lemma}
\label{limit on inland vertex and critical star--s.a.}
If $G$ is neither a $K_3$ nor a star and
if $|U(\mathcal{S})|\leq \gamma'$, then the following statements hold.
\begin{enumerate}[\rm (1)]
\item $|\mathcal{P}_\mathrm{i}|=|V_\mathrm{i}|$.
\item For $i \in [k]$, $|\EGP(\mathcal{S}_i)|=m_i+1$
and
$\mathcal{P}$ contains precisely $m_i+1$ cliques induced by $v_i$-stars.
\item For every $v\in V_\mathrm{i}$, $\mathcal{P}$ contains at most
one clique induced by $v$-star.
\end{enumerate}
\end{lemma}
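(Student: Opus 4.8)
The plan is to follow the proof of Lemma~\ref{limit on inland vertex and critical star} almost verbatim, with $\gamma$ replaced by $\gamma'$ and the surjection $f$ replaced by its wing-modified version $f'$; the extra monopolists forced by the antichain property will let me dispense with the $V_{3w}$ and $3K_2\vee K_1$ exceptions that appear in the simple-distinct case.

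Statement~(1) is immediate. Lemma~\ref{Q-IA < V-IA} gives $|\mathcal{P}_\mathrm{i}|\leq|V_\mathrm{i}|$, since $G$ is not a star and $|U(\mathcal{S})|\leq\gamma'$, while Lemma~\ref{f^{-1}(inland) not empty after delete critical star--s.a.} gives $|\mathcal{P}_\mathrm{i}|\geq|V_\mathrm{i}|$, since $G\neq K_3$. Combining the two bounds yields the equality.

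For statement~(2) I would first recall from the proof of Lemma~\ref{Q-IA < V-IA} that $|\EGP(\mathcal{S}_i)|\geq m_i+1$ for every $i\in[k]$. Next, under the $\EGP$-bijection for $\mathcal{S}$ each element of $U(\mathcal{S}_i)$ determines a distinct clique of $\mathcal{P}$ that contains a pendant edge $v_iv_i^j$; by Whitney's theorem (Theorem~\ref{StarOrTriangle}) such a clique is induced by a $v_i$-star. Hence $\mathcal{P}$ contains at least $|\EGP(\mathcal{S}_i)|\geq m_i+1$ cliques induced by $v_i$-stars, and, the centers being distinct, these families are disjoint across $i$ and all lie in $\mathcal{P}_\mathrm{c}$. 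Summing gives $|\mathcal{P}_\mathrm{c}|\geq\sum_{i=1}^{k}(m_i+1)$. By statement~(1), $|U(\mathcal{S})|=|\mathcal{P}_\mathrm{i}|+|\mathcal{P}_\mathrm{c}|=|V_\mathrm{i}|+|\mathcal{P}_\mathrm{c}|$, so $|U(\mathcal{S})|\leq\gamma'$ forces $|\mathcal{P}_\mathrm{c}|\leq\sum_{i=1}^{k}(m_i+1)$. Equality throughout then squeezes each count to $m_i+1$, which proves both $|\EGP(\mathcal{S}_i)|=m_i+1$ and that $\mathcal{P}$ has precisely $m_i+1$ cliques induced by $v_i$-stars.

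Statement~(3) is the crux, and I would argue by contradiction. Suppose some inland vertex $v\in V_\mathrm{i}$ gives rise to two cliques of $\mathcal{P}$ induced by $v$-stars; since $v\notin V_\mathrm{c}$, both lie in $\mathcal{P}_\mathrm{i}$. It suffices to show $|C_v\cap\mathcal{P}_\mathrm{i}|\geq 2$ under $f'$: then $\sum_{u\in V_\mathrm{i}}|C_u\cap\mathcal{P}_\mathrm{i}|\geq|V_\mathrm{i}|+1$, because every term is at least $1$ by Lemma~\ref{f^{-1}(inland) not empty after delete critical star--s.a.}, contradicting $|\mathcal{P}_\mathrm{i}|=|V_\mathrm{i}|$ from statement~(1). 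If $v$ is not a stalk vertex of a wing, then $f'$ (acting through $f_1$ or $f_2$) assigns both $v$-star cliques to $v$, giving $|C_v\cap\mathcal{P}_\mathrm{i}|\geq 2$ at once. The delicate case is when $v$ is a stalk vertex in $W$. Here the modification $f'_3$ is decisive: for every $v$-wing $vxy$ that induces a $K_3$ in $\mathcal{P}$, the antichain property forces $S(xy)$ to contain a monopolist, and $f'_3$ sends the $K_3$ to one of $x,y$ and the monopolist clique to the other; every $v$-wing that does not induce a $K_3$ sends its two cliques $S_x^2$ and $S_y^2$ to $x$ and $y$. Thus all $2t$ non-stalk vertices of $v$'s wings are served by wing cliques, so every clique induced by a $v$-star is assigned to $v$ itself, and two of them again yield $|C_v\cap\mathcal{P}_\mathrm{i}|\geq 2$.

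The main obstacle is exactly this last case, and it is where the simple-antichain setting parts company with the simple-distinct one. In Lemma~\ref{limit on inland vertex and critical star} a $3$-wing could spread its $K_3$ and its two $v$-star cliques evenly over the three vertices of the wing, which is precisely why $V_{3w}$ and $3K_2\vee K_1$ had to be set aside there; in the antichain setting the additional monopolist clique occupies the second non-stalk vertex, both $v$-star cliques are pushed onto $v$, and no exception survives. To make this rigorous I would still need to check that $f'_3$ remains a well-defined surjection after the reassignment---that the monopolist cliques are genuinely distinct from the wing $K_3$'s and from the $v$-star cliques, and that the clique count of Proposition~\ref{TypeIIIlowerbound} continues to balance---but granting this bookkeeping, the contradiction with statement~(1) closes the proof.
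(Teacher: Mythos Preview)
Your proposal is correct and follows essentially the same route as the paper. The only cosmetic difference is in the stalk-vertex case of statement~(3): you argue that under $f'_3$ the $2t$ non-stalk vertices are all served by wing cliques (the $K_3$'s together with their monopolist trivial cliques, or the pairs $S_x^2,S_y^2$), so both $v$-star cliques land on $v$ itself, whereas the paper phrases it as a pigeonhole---``$C_{v'}$, for some $v'$ on the $v$-wings, contains two cliques in $\mathcal{P}_\mathrm{i}$''---without committing to which vertex carries the excess; both lead to the same contradiction with $|\mathcal{P}_\mathrm{i}|=|V_\mathrm{i}|$. One small slip: when $v$ is not a stalk vertex you write that $f'$ acts ``through $f_1$ or $f_2$'', but $f_1$ only handles triangles in $\mathcal{P}_r$, not $v$-star cliques; the relevant map is $f_2$ (or $f'_3$ if $v$ happens to be a non-stalk wing vertex), exactly as in the paper's own argument.
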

\begin{proof}
Lemma~\ref{Q-IA < V-IA} implies that $|\EGP(\mathcal{S}_i)|\geq
m_i+1$ for $i \in [k]$. By Proposition~\ref{all members of Q(F_C^)
are substars of Q(F)}, $\mathcal{P}$ contains at least $m_i+1$
cliques induced by $v_i$-stars for $i \in [k]$. If either
$|\mathcal{P}_\mathrm{i}| \neq |V_\mathrm{i}|$ or $\mathcal{P}$
contains more than $m_i+1$ cliques induced by $v_i$-stars for some
$i \in [k]$, then, by Lemma~\ref{f^{-1}(inland) not empty after
delete critical star--s.a.},
\[|U(\mathcal{S})|=|\mathcal{P}|=|\mathcal{P}_\mathrm{i}|+
|\mathcal{P}_\mathrm{c}|>|V_\mathrm{i}|+\sum_{i=1}^{k}(m_i+1)=\gamma'.\]
This contradicts the assumption that $|U(\mathcal{S})|\leq \gamma'$.
Thus $|\mathcal{P}_\mathrm{i}|=|V_\mathrm{i}|$ and $\mathcal{P}$
contains precisely $m_i+1$ cliques induced by $v_i$-stars for $i \in
[k]$. Consequently, for all $i \in [k]$,
$|\EGP(\mathcal{S}_i)|=m_i+1$. This proves the the first two
statements.

\medskip

\noindent It remains to prove the third statement. By
Lemma~\ref{f^{-1}(inland) not empty after delete critical
star--s.a.}, for all $v \in V_\mathrm{i}$, it follows that $C_v\cap
\mathcal{P}_\mathrm{i}\neq \es$ and $|\mathcal{P}_\mathrm{i}|
\geqslant |V_\mathrm{i}|$. Suppose that $\mathcal{P}$ contains two
cliques induced by $v$-stars for some $v\in V_\mathrm{i}$. By the
construction of $f'$, it follows that $C_v$ contains those two
cliques unless $v$ is a stalk vertex in $W$. However, if $v$ is a
stalk vertex in $W$, then $C_{v'}$, for some $v'$ on the $v$-wings,
contains two cliques in $\mathcal{P}_\mathrm{i}$ by the construction
of $f'_3$. Thus, in any case, there exists a $v\in V_\mathrm{i}$
with $|C_{v}\cap \mathcal{P}_\mathrm{i}|\geq 2$. This results in
$|\mathcal{P}_\mathrm{i}|
> |V_\mathrm{i}|$ which
contradicts the first statement. This completes the proof.
\qed\end{proof}

\bigskip

\begin{lemma}
\label{no T-triangle--s.a.} If $G \notin \{K_3, K_4, \text{ a
star}\}$ and $|U(\mathcal{S})|\leqslant \gamma'$, then
$\mathcal{P}_{r}=\es$.
\end{lemma}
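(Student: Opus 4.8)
The plan is to mirror the structure of Lemma~\ref{no T-triangle} (the corresponding $sd$-statement) and argue by contradiction, now using the $sa$-analogues that have just been established. So first I would assume $\mathcal{P}_{r}\neq\es$, meaning $\mathcal{P}$ contains at least one clique $C$ induced by a $K_3$ in $G$ that is neither a wing nor a semiwing. The whole point is to show that such a clique forces $|\mathcal{P}_\mathrm{i}| > |V_\mathrm{i}|$, which contradicts statement~(1) of Lemma~\ref{limit on inland vertex and critical star--s.a.}.

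As in the $sd$-proof, I would split into two cases according to whether $R_\mathcal{P}$ is empty. If $R_\mathcal{P}\neq\es$, then Lemma~\ref{Q >= R} gives $|\mathcal{P}_{r}|\geq|R_\mathcal{P}|$, with equality only when $R_\mathcal{P}=\es$ or $G=K_4$; since we have excluded $K_4$ and are assuming $R_\mathcal{P}\neq\es$, the inequality is strict, so $|\mathcal{P}_{r}|>|R_\mathcal{P}|$. Because the surjection $f_1:\mathcal{P}_{r}\to R_\mathcal{P}$ cannot be injective in this case, some $v\in R_\mathcal{P}$ has $|C_v\cap\mathcal{P}_{r}|\geq 2$. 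If instead $R_\mathcal{P}=\es$, then there is no $f_1$ at all, yet $\mathcal{P}_{r}$ still contains at least one clique, so again some inland vertex receives an extra clique from $\mathcal{P}_{r}$. In both cases, since $\mathcal{P}_{r}\subseteq\mathcal{P}_\mathrm{i}$ (a clique induced by a plain $K_3$, being neither a wing nor a $v$-star with $v\in V_\mathrm{c}$, lies in the inland part), we obtain $|\mathcal{P}_\mathrm{i}|>|V_\mathrm{i}|$ via Lemma~\ref{f^{-1}(inland) not empty after delete critical star--s.a.}, contradicting statement~(1) of Lemma~\ref{limit on inland vertex and critical star--s.a.}.

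The main thing to verify carefully, rather than the counting itself, is that the $sa$-setting does not break the bookkeeping that made the $sd$-argument go through. Two points deserve attention. First, I must confirm that every vertex of $R_\mathcal{P}$ is inland, so that the extra clique genuinely inflates $|\mathcal{P}_\mathrm{i}|$ rather than $|\mathcal{P}_\mathrm{c}|$; this follows because a vertex in $R_\mathcal{P}$ has no clique induced by its own stars, whereas every critical vertex $v_i$ does (it carries the $m_i+1$ star-cliques from Lemma~\ref{limit on inland vertex and critical star--s.a.}(2)), so $R_\mathcal{P}\cap V_\mathrm{c}=\es$. Second, I need the hypothesis $G\neq K_4$ precisely to rule out the equality case of Lemma~\ref{Q >= R}, and the hypotheses $G\neq K_3$ and $G$ not a star to legitimately invoke Lemma~\ref{limit on inland vertex and critical star--s.a.}, whose conclusions drive the contradiction.

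I expect the only genuine obstacle to be the disjointness/containment check $\mathcal{P}_{r}\subseteq\mathcal{P}_\mathrm{i}$ together with the observation $R_\mathcal{P}\cap V_\mathrm{c}=\es$, since everything else is a direct transcription of the $sd$-proof with $\gamma$ replaced by $\gamma'$ and Lemma~\ref{limit on inland vertex and critical star} replaced by its $sa$-counterpart. Once those containments are in place, the contradiction with $|\mathcal{P}_\mathrm{i}|=|V_\mathrm{i}|$ is immediate and the lemma follows.
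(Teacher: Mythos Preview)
Your proposal is correct and follows exactly the approach the paper takes: the paper's proof literally says to reuse the argument of Lemma~\ref{no T-triangle} with Corollary~\ref{f^{-1}(inland) not empty after delete critical star} and Lemma~\ref{limit on inland vertex and critical star} replaced by their $sa$-analogues, Lemmas~\ref{f^{-1}(inland) not empty after delete critical star--s.a.} and~\ref{limit on inland vertex and critical star--s.a.}. Your extra verifications (that $R_\mathcal{P}\cap V_\mathrm{c}=\es$ and $\mathcal{P}_r\subseteq\mathcal{P}_\mathrm{i}$) are sound and already implicit in the paper's setup, so nothing is missing.
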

\begin{proof}
This lemma follows by replacing Corollary~\ref{f^{-1}(inland) not
empty after delete critical star} and Lemma~\ref{limit on inland
vertex and critical star} by Lemmas~\ref{f^{-1}(inland) not empty
after delete critical star--s.a.} and~\ref{limit on inland vertex
and critical star--s.a.}, respectively, in the proof of
Lemma~\ref{no T-triangle}. \qed\end{proof}

\bigskip

\begin{lemma}
\label{no semiwing--s.a.} Assume that $G \notin \{K_3, W_t, \text{ a
star}\}$ and assume that $|U(\mathcal{S})|\leq \gamma'$. If a
semiwing in $G$ induces a $K_3$ in $\mathcal{P}$, then at least one
stalk vertex of this semiwing is in $V_\mathrm{c}$.
\end{lemma}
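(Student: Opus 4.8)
The plan is to combine a single structural fact about semiwings (Lemma~\ref{semiwing}) with the counting restriction on inland vertices established in Lemma~\ref{limit on inland vertex and critical star--s.a.}; no computation should be needed. Let $wuv$ denote the semiwing inducing the $K_3$ in $\mathcal{P}$, with non-stalk vertex $w$, so that $d(w)=2$ while the two stalk vertices $u,v$ satisfy $d(u),d(v)>2$.

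First I would invoke Lemma~\ref{semiwing}. Its hypothesis $G\neq W_t$ is available because $G\notin\{K_3,W_t,\text{a star}\}$, and $\mathcal{P}$ contains the $K_3$ induced by $wuv$ by assumption. The lemma then produces some stalk vertex $x\in\{u,v\}$ for which $\mathcal{P}$ contains \emph{at least two} cliques induced by $x$-stars. Note we do not control which of $u,v$ this $x$ is, but that is harmless since the claim only asserts that \emph{some} stalk vertex lies in $V_\mathrm{c}$.

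Next I would apply the third statement of Lemma~\ref{limit on inland vertex and critical star--s.a.}. Its hypotheses hold in the present situation: $G\notin\{K_3,\text{a star}\}$ follows from $G\notin\{K_3,W_t,\text{a star}\}$, and $|U(\mathcal{S})|\leq\gamma'$ is assumed. Hence every inland vertex $y\in V_\mathrm{i}$ contributes at most one clique induced by a $y$-star to $\mathcal{P}$. Combining this with the previous paragraph, the vertex $x$ cannot be inland, i.e.\ $x\notin V_\mathrm{i}$. Since $x$ is a stalk vertex we have $d(x)>2$, so $x\in V_2(G)$; because $V_2(G)$ is the disjoint union $V_\mathrm{i}\cup V_\mathrm{c}$, it follows that $x\in V_\mathrm{c}$. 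As $x$ is one of the two stalk vertices of the semiwing, this is exactly the assertion of the lemma.

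The argument has essentially no hard step; the one place that demands care — and the closest thing to an obstacle — is verifying that the exclusions in the hypothesis precisely license the two cited results, namely that excluding $W_t$ permits Lemma~\ref{semiwing} and that excluding $K_3$ and stars permits Lemma~\ref{limit on inland vertex and critical star--s.a.}. Once these inclusions of hypotheses are checked, the conclusion is immediate.
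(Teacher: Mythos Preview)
Your proof is correct and follows essentially the same route as the paper: invoke Lemma~\ref{semiwing} to obtain a stalk vertex $x\in\{u,v\}$ with at least two $x$-star cliques in $\mathcal{P}$, then use the third statement of Lemma~\ref{limit on inland vertex and critical star--s.a.} to force $x\in V_\mathrm{c}$. If anything, you are slightly more careful than the paper in treating $x$ as ``some'' stalk vertex rather than naming it $u$ outright, and in explicitly checking that the exclusions in the hypothesis match those required by the two cited lemmas.
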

\begin{proof}
Let $wuv$ with $d(w)=2$ be a semiwing which induces a $K_3$ in
$\mathcal{P}$. By Lemma~\ref{semiwing}, at least two $u$-stars
induce nontrivial cliques in $\mathcal{P}$. By the third statement
in Lemma~\ref{limit on inland vertex and critical star--s.a.}, $u\in
V_\mathrm{c}$. This completes the proof. \qed\end{proof}

\bigskip

\begin{lemma} \label{no wing--s.a.}
Assume that $G \notin \{K_3, \text{ a star}\}$ and assume that
$|U(\mathcal{S})|\leq \gamma'$. If there is a $v$-wing in $G$
inducing a $K_3$ in $\mathcal{P}$, then $v \in V_\mathrm{c}$.
\end{lemma}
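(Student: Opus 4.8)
The plan is to argue by contradiction, exploiting the fact that the stalk vertex of a wing has degree greater than two, so it lies in $V_2(G)=V_\mathrm{c}\cup V_\mathrm{i}$ (Definition~\ref{df critical and inland}). I would suppose that the conclusion fails, that is, $v\notin V_\mathrm{c}$ and hence $v\in V_\mathrm{i}$, and then show that this is incompatible with the standing assumption $|U(\mathcal{S})|\leq\gamma'$.

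First I would record what the hypothesis buys us. Since some $v$-wing induces a $K_3$ in $\mathcal{P}$, by definition of $w_v(\mathcal{P})$ we have $w_v(\mathcal{P})\geq 1$. Applying Lemma~\ref{wings} to this $v$ then forces $\mathcal{P}$ to contain at least $w_v(\mathcal{P})+1\geq 2$ cliques induced by $v$-stars.

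On the other hand, the assumptions $G\notin\{K_3,\text{a star}\}$ and $|U(\mathcal{S})|\leq\gamma'$ are exactly the hypotheses of Lemma~\ref{limit on inland vertex and critical star--s.a.}, whose third statement asserts that for every inland vertex $v\in V_\mathrm{i}$ the partition $\mathcal{P}$ contains at most one clique induced by a $v$-star. Combining this with the lower bound of the previous paragraph yields the contradiction: the supposed inland stalk vertex $v$ would need both at least two and at most one $v$-star clique in $\mathcal{P}$. Hence $v\in V_\mathrm{c}$.

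I expect no genuine obstacle here; this is the simple-antichain counterpart of Lemma~\ref{no wing}, and it is in fact cleaner. In the simple-distinct setting one had to exclude $3K_2\vee K_1$ and restrict to $v\notin V_{3w}$, because the corresponding ``at most one clique'' statement held only for \emph{nontrivial} cliques and only with that exception. Here the third statement of Lemma~\ref{limit on inland vertex and critical star--s.a.} holds unconditionally for inland vertices, so a single application of Lemma~\ref{wings} closes the argument at once. The only point demanding care is verifying that the hypotheses of Lemma~\ref{limit on inland vertex and critical star--s.a.} are met, which they are, since they coincide verbatim with the hypotheses of the present lemma.
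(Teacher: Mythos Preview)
Your proof is correct and follows essentially the same route as the paper: invoke Lemma~\ref{wings} to obtain at least two cliques in $\mathcal{P}$ induced by $v$-stars, and then apply the third statement of Lemma~\ref{limit on inland vertex and critical star--s.a.} to rule out $v\in V_\mathrm{i}$, forcing $v\in V_\mathrm{c}$. The paper's proof is just the two-line contrapositive version of your contradiction argument; your added remarks comparing with Lemma~\ref{no wing} are accurate commentary but not needed for the proof itself.
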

\begin{proof}
By Lemma~\ref{wings}, $\mathcal{P}$ contains two cliques induced by
$v$-stars. By the third statement in Lemma~\ref{limit on inland
vertex and critical star--s.a.}, $v\in V_\mathrm{c}$. This completes
the proof. \qed\end{proof}

By Corollary~\ref{deBr} and Theorem~\ref{bridges}, if there exists
an $\EGP(K_{m_i})$ such that $1<|\EGP|\leq m_i+1$ and there is no
trivial clique in it, then it can only be an $\NP$, $\PP$,
$1$-punctured $\PP$, or $2$-punctured Fano Plane. Thus, by the
second statement in Lemma~\ref{limit on inland vertex and critical
star--s.a.}, for each $i \in [k]$, $\EGP(\mathcal{S}_i)$ can only be
one of the following five edge-clique covers: $\NP$ plus one trivial
clique, $\PP$ plus one trivial clique, $1$-punctured PP,
$2$-punctured Fano Plane, or $K_{m_i}$ plus $m_i$ distinct trivial
cliques. However, both of $\PP$ plus one trivial clique and
$2$-punctured Fano Plane are impossible as we will show later. We
distinguish the following types of edge-clique covers for
$\EGP(\mathcal{S}_i)$.

\begin{description}
\item[Type I:] $\EGP(\mathcal{S}_i)$ is an $\NP$ plus one trivial
clique.

In this case, the unique way to configure the set representation
is as follows. The clique $K_{m_i-1}$ in $\NP$ and the trivial
clique $K_1$ are disjoint in $\EGP(\mathcal{S}_i)$. Furthermore,
every $S(v_iv_i^j)$ for $m_i+1\leqslant j\leqslant d(v_i)$
contains the two elements in $U(\mathcal{S}_i)$ corresponding to
$K_{m_i-1}$ and $K_1$. Notice that $d(v_i) > m_i+1$ implies
$|S(v_iv_i^{m_i+1})\cap S(v_iv_i^{m_i+2})|\geq 2$. Thus this
case occurs only when $d(v_i)=m_i+1$.
\item[Type II:] $\EGP(\mathcal{S}_i)$ is a $1$-punctured $\PP$.

Let $r$ be the order of the $\PP$ and let $x$ be the deleted
point. Thus $m_i=r^2+r$. By Theorem~\ref{project plane}, the
$\PP$ has $r+1$ lines that go through $x$. Therefore,
$\EGP(\mathcal{S}_i)$ has exactly $r+1$ pairwise disjoint
cliques of cardinality $r$. The total number of these vertices
is $r^2+r=m_i$. Of course, every other pair of lines in
$\EGP(\mathcal{S}_i)$ intersect in one point. Thus
$S(v_iv_i^{m_i+1})$ contains the $r+1$ elements in
$U(\mathcal{S}_i)$ corresponding to these $r+1$ cliques. This
case occurs when $d(v_i)=m_i+1$; since otherwise
$|S(v_iv_i^{m_i+1})\cap S(v_iv_i^{m_i+2})|\geq 2$ which is a
contradiction.

\item[Type III:] $\EGP(\mathcal{S}_i)$ consists of $K_{m_i}$ plus $m_i$
trivial cliques.

In this case, $S(v_iv_i^j)$ for each $j$ with $m_i+1\leq
j\leq d(v_i)$ contains either
\begin{enumerate}[\rm (a)]
\item the element in
$U(\mathcal{S}_i)$ that corresponds to the $K_{m_i}$ or,
\item the
elements in $U(\mathcal{S}_i)$ that correspond to the $m_i$
trivial cliques.
\end{enumerate}

\item[Type IV:] $\EGP(\mathcal{S}_i)$ is a $\PP$ plus one trivial clique.

Since a $\PP$ of $K_{m_i}$ does not have a clique $K_{m_i-1}$,
the configuration for Type I cover cannot be applied in this
case. Thus this case is impossible.

\item[Type V:] $\EGP(\mathcal{S}_i)$ is a $2$-punctured Fano Plane.

In a $2$-punctured Fano Plane (see the $\EGP$-set in
Figure~\ref{FLS with 5 points 6 lines} as an example),
$\{Q_3,Q_6\}$ and $\{Q_4,Q_5\}$ are the only sets of pairwise
disjoint cliques. However, if $S(v_iv_i^{m_i+1})$ contains the
two elements in $U(\mathcal{S}_i)$ corresponding to $Q_3$ and
$Q_6$, then it contains another element corresponding to $Q_1$
or $Q_2$. Consequently, $|S(v_iv_i^{m_i+1})\cap S(v_iv_i^j)|\geq
2$ for some $1\leq j\leq m_i$, a contradiction. A similar
reasoning applied to $Q_4$ and $Q_5$ also leads to a
contradiction. Thus this case is also impossible.
\end{description}

\bigskip

\begin{lemma}
\label{d(v_i)=m_i+1}
If $G$ is neither $K_3$ nor a star and $|U(\mathcal{S})|\leq
\gamma'$, then the following two statements are true.
\begin{enumerate}[\rm (1)]
\item
For $i\in [k]$ with $d(v_i)=m_i+1$, the cliques in
$\mathcal{P}$, induced by $v_i$-stars, are either a $K_{d(v_i)}$
with $m_i$ trivial cliques $\{v_iv_i^1\},\dots
,\{v_iv_i^{m_i}\}$ or they constitute an $\NP$ or a $\PP$ of
$K_{d(v_i)}$.
\item For $i \in [k]$ with $d(v_i)>m_i+1$,
the set $\EGP(\mathcal{S}_i)$ consists of a $K_{m_i}$ and $m_i$
distinct trivial cliques. Furthermore, each $S(v_iv_i^j)\cap
U(\mathcal{S}_i)$, for $m_i+1\leq j\leq d(v_i)$, contains either
the element corresponding to the $K_{m_i}$ or the elements
corresponding to the $m_i$ trivial cliques.
\end{enumerate}
\end{lemma}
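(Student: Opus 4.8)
The plan is to combine Lemma~\ref{limit on inland vertex and critical star--s.a.}, which already pins down that $|\EGP(\mathcal{S}_i)|=m_i+1$ and that $\mathcal{P}$ contains precisely $m_i+1$ cliques induced by $v_i$-stars, with the preceding classification of $\EGP(\mathcal{S}_i)$ into the five types. First I would record one structural observation used throughout: for a critical vertex $v_i$, each degree-one neighbor $v_i^j$ with $j\le m_i$ lies in no triangle of $G$, so by Whitney's theorem (Theorem~\ref{StarOrTriangle}) every clique of $\mathcal{P}$ meeting the vertex $v_iv_i^j$ of $G^*$ is induced by a $v_i$-star. Hence $S(v_iv_i^j)$ consists only of elements corresponding to $v_i$-star cliques, and $\EGP(\mathcal{S}_i)$ is exactly the family of traces of these $v_i$-star cliques on $\{v_iv_i^1,\dots,v_iv_i^{m_i}\}$.

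For the second statement, where $d(v_i)>m_i+1$, I would argue by elimination. Types IV and V were shown to be impossible, and in the discussion of Types I and II it was established that each occurs only when $d(v_i)=m_i+1$; thus for $d(v_i)>m_i+1$ only Type III survives, giving $\EGP(\mathcal{S}_i)=\{K_{m_i}\}\cup\{\{v_iv_i^1\},\dots,\{v_iv_i^{m_i}\}\}$. For the ``furthermore'' clause I would use that each degree-one edge satisfies $S(v_iv_i^{j'})=\{a,b_{j'}\}$, where $a$ is the element of the $K_{m_i}$ and $b_{j'}$ is the monopolist of the trivial clique $\{v_iv_i^{j'}\}$. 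For a non-degree-one edge $v_iv_i^{j}$ with $j>m_i$, adjacency in $G^*$ forces $S(v_iv_i^{j})\cap\{a,b_{j'}\}\ne\es$ for every $j'\le m_i$, while simplicity forbids $|S(v_iv_i^{j})\cap\{a,b_{j'}\}|\ge 2$. Together these leave exactly two options: either $a\in S(v_iv_i^{j})$ and no $b_{j'}$ lies in it, or $a\notin S(v_iv_i^{j})$ and all of $b_1,\dots,b_{m_i}$ lie in it, which is the claimed dichotomy.

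For the first statement, where $d(v_i)=m_i+1$, there is a single extra neighbor $z=v_i^{m_i+1}$, and by the observation above no triangle passes through $v_i$; therefore the $m_i+1$ cliques of $\mathcal{P}$ induced by $v_i$-stars form an edge-clique partition of the whole $K_{d(v_i)}=K_{m_i+1}$ spanned by $\{v_iv_i^1,\dots,v_iv_i^{m_i},v_iz\}$. I would then apply Corollary~\ref{deBr} to this partition into exactly $m_i+1=d(v_i)$ cliques. If it has no trivial clique, Corollary~\ref{deBr} forces an $\NP$ or a $\PP$ of $K_{d(v_i)}$. If it has at least one trivial clique, then the nontrivial ones number at most $m_i$ yet still cover all edges, so Corollary~\ref{deBr} leaves only a single nontrivial clique equal to $K_{d(v_i)}$ together with $m_i$ trivial cliques.

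The remaining step, identifying these trivial cliques, is the one I expect to be the main obstacle, since $S(v_iz)$ may additionally contain elements coming from $z$-star cliques and so behaves differently from the degree-one edges. The $m_i$ trivial singletons occupy $m_i$ of the $m_i+1$ vertices, so let $w$ be the unique vertex that is not a singleton; then $S(w)$ contains only the element $a$ of the big clique $K_{d(v_i)}$ (plus possible $z$-star elements if $w=v_iz$). If $w$ were a degree-one edge $v_iv_i^{j_0}$, then $S(v_iv_i^{j_0})=\{a\}$, whereas each of the $m_i$ other vertices $u$, being a trivial singleton, would satisfy $S(u)\supseteq\{a,b_u\}\supsetneq\{a\}$, contradicting the antichain property of $\mathcal{S}$. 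This would rule out $w$ being a degree-one edge, forcing $w=v_iz$, so that all $m_i$ degree-one edges are the trivial singletons and the partition is $K_{d(v_i)}$ together with $\{v_iv_i^1\},\dots,\{v_iv_i^{m_i}\}$, which is what Statement~(1) asserts.
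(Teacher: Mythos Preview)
Your argument is correct. For statement~(2) you follow essentially the paper's route: eliminate Types~IV and~V, observe that Types~I and~II require $d(v_i)=m_i+1$, and unpack Type~III; your added justification of the dichotomy for $S(v_iv_i^j)\cap U(\mathcal{S}_i)$ via simplicity is just a spelled-out version of what the paper's Type~III description asserts.

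For statement~(1) you take a genuinely different route. The paper argues type by type: it shows that a Type~I cover of $\mathcal{S}_i$ extends to an $\NP$ of $K_{d(v_i)}$, a Type~II cover extends to a $\PP$ of $K_{d(v_i)}$, and a Type~III cover extends either to an $\NP$ or to $K_{d(v_i)}$ plus $m_i$ trivial cliques, depending on which option $S(v_iv_i^{m_i+1})$ takes. You bypass this case analysis entirely: having observed that no triangle of $G$ contains $v_i$ (since $v_i$ has only the single non-plume neighbor $z$), you note that the $m_i+1$ $v_i$-star cliques in $\mathcal{P}$ already partition the full $K_{d(v_i)}$, and then apply Corollary~\ref{deBr} directly to that partition. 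The antichain condition then pins down which vertex escapes being a trivial singleton. Your approach is shorter and conceptually cleaner, since it never needs to track how each type of $\EGP(\mathcal{S}_i)$ extends; the paper's approach, on the other hand, makes the correspondence between the types and the three outcomes in statement~(1) explicit, which is informative for the later counting in Theorem~\ref{line graph m.s.a.-rep.}. One small point worth making explicit in your write-up: the appeal to Corollary~\ref{deBr} presupposes $d(v_i)\geq 3$; the case $d(v_i)=2$ (i.e.\ $m_i=1$) is trivial but should be noted separately.
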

\begin{proof}
By inspection, Types~I and~II covers occur only when $d(v_i)=m_i+1$.
Type~III covers might have $d(v_i)>m_i+1$. Thus we have to consider
these three types of covers when proving the first statement and we
only need to consider Type~III when proving the second statement.

\medskip

\noindent
By the definition of a Type~III cover, the set $\EGP(\mathcal{S}_i)$ is
a $K_{m_i}$ plus $m_i$ distinct trivial cliques. This implies that
$S(v_iv_i^j)\cap U(\mathcal{S}_i)$ for $m_i+1\leq j\leq
d(v_i)$ contains either the element corresponding to the $K_{m_i}$
or the elements corresponding to the $m_i$ trivial cliques.
This proves
the second statement.

\medskip

\noindent
Next, we prove the first statement.
Consider the case that
$\EGP(\mathcal{S}_i)$ is a Type~I cover. The trivial clique in
$\EGP(\mathcal{S}_i)$ contains the intersection of all the $K_2$'s in the
$\NP$. Furthermore, $S(v_iv_i^{m_i+1})\cap U(\mathcal{S}_i)$
contains the two elements corresponding to the $K_{m_i-1}$ and $K_1$
in $\EGP(\mathcal{S}_i)$. Therefore, $v_iv_i^{m_i+1}$ together with
the vertices in the $K_{m_i-1}$ induce a clique in $\mathcal{P}$.
Vertex $v_iv_i^{m_i+1}$ together with the vertex in $K_1$ induce
another clique in $\mathcal{P}$. As a consequence, the cliques in
$\mathcal{P}$ induced by $v_i$-stars constitute an
$\NP$ of $K_{d(v_i)}$.

\medskip

\noindent
Now consider the case that $\EGP(\mathcal{S}_i)$ is a Type~II
cover. That is, $\EGP(\mathcal{S}_i)$ is a $\PP$ with a vertex, say $x$,
deleted. Furthermore, $S(v_iv_i^{m_i+1})\cap U(\mathcal{S}_i)$
contains the cliques in
$\EGP(\mathcal{S}_i)$ that, originally, contained the vertex $x$.
Therefore, $v_iv_i^{m_i+1}$ together with the vertices in each of
those cliques induce a clique in $\mathcal{P}$. Accordingly, the
cliques in $\mathcal{P}$ induced by $v_i$-stars constitute a $\PP$
of
$K_{d(v_i)}$.

\medskip

\noindent
Finally, we consider the case that $\EGP(\mathcal{S}_i)$ is a Type~III
cover.
In this case, if $d(v_i)=m_i+1$ with $m_i\geq 2$ and
$S(v_iv_i^{m_i+1})$ contains the elements corresponding to the $m_i$
trivial cliques, then clique $\{v_iv_i^{m_i+1},v_iv_i^j\}\in
\mathcal{P}$ for $1\leq j\leq m_i$. This means that the
cliques in $\mathcal{P}$ induced by $v_i$-stars constitute an
$\NP$ of $K_{d(v_i)}$.

\medskip

\noindent If $d(v_i)=m_i+1$ and $S(v_iv_i^{m_i+1})$ contains the
element corresponding to the $K_{m_i}$, then $S_{v_i}^{d(v_i)}$
induces a clique in $\mathcal{P}$. In this way, the cliques in
$\mathcal{P}$ induced by $v_i$-stars are a $K_{d(v_i)}$ and $m_i$
trivial cliques. This completes the proof. \qed\end{proof}

\bigskip

\begin{theorem}
\label{specialgraphs m.s.a.-rep.} For tailed peacocks $G$,
\begin{equation*}
\tau_{sa}(G^*)=
\begin{cases}
5 & \text{if $G$ is $\mathrm{TP}_2$ with $m_1,m_2\geq 2$ and $m_1\neq m_2$,}\\
4 & \text{if $G$ is $\mathrm{TP}_2$ with $m_1=m_2 \geq 2$,}\\
3 & \text{if $G$ is $\mathrm{TP}_2$ with $m_1\geq 2$ and $m_2=1$,}\\
2 & \text{otherwise.}
\end{cases}
\end{equation*}
\end{theorem}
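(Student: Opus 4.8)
The plan is to first fix $\theta_{sa}(G^*)$ and then enumerate the minimal representations class by class, using the Type~I--V dichotomy developed above. Write the central $K_3$ of the peacock as $abc$. For a $\mathrm{TP}_2$ the stalk (critical) vertices are $a,b$, carrying $m_1,m_2$ plumes, while $d(c)=2$, so $V_\mathrm{i}=\{c\}$ and $\gamma'=m_1+m_2+3$. Because $d(a)=m_1+2>m_1+1$ and $d(b)=m_2+2>m_2+1$, the second statement of Lemma~\ref{d(v_i)=m_i+1} forces each $\EGP(\mathcal{S}_i)$ to be a Type~III cover; thus every plume block is the rigid configuration $S(v_iv_i^{j})=\{\alpha_i,t^{(i)}_j\}$, where $\alpha_i$ is the big-clique element and $t^{(i)}_j$ is the trivial one. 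Combining $|\mathcal{P}_\mathrm{i}|\ge|V_\mathrm{i}|$ (Lemma~\ref{f^{-1}(inland) not empty after delete critical star--s.a.}) with $|\EGP(\mathcal{S}_i)|\ge m_i+1$ (Lemma~\ref{Q-IA < V-IA} and Proposition~\ref{all members of Q(F_C^) are substars of Q(F)}) gives $\theta_{sa}(G^*)\ge\gamma'$, and exhibiting the uniform representation $R_0$ of all $2$-element sets (of universe size $\gamma'$) shows equality.

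The core of the proof is the dichotomy for the central semiwing $abc$: by Lemma~\ref{no semiwing--s.a.} it may or may not induce a $K_3$ in $\mathcal{P}$, since both stalks lie in $V_\mathrm{c}$. If the triangle is \emph{split}, covering $\{ab,ac\}$ and $\{ab,bc\}$ inside $U(\mathcal{S}_1)$ and $U(\mathcal{S}_2)$ without a new element forces $ab,ac$ to share $\alpha_1$ and $ab,bc$ to share $\alpha_2$, while $\{ac,bc\}$ still demands one fresh element; for $m_1,m_2\ge 2$ this pins down $R_0$ uniquely. If instead the triangle is a $K_3$ with element $\tau$, simplicity forces $S(ab)\cap S(ac)=S(ab)\cap S(bc)=\{\tau\}$, so on each side exactly one of the two central edges reaches its plume block through $\alpha_i$ and the other through the full set of trivial elements. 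These independent binary choices on the $a$- and $b$-sides produce four representations, all of universe size $\gamma'$, so together with $R_0$ the minimal representations number exactly five.

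To separate isomorphism classes I would use the multiset of set-cardinalities as the invariant. For $m_1,m_2\ge 2$ the four $K_3$-representations have profiles $(3,m_1{+}1,m_2{+}1)$, $(m_2{+}2,m_1{+}1,2)$, $(m_1{+}2,2,m_2{+}1)$, $(m_1{+}m_2{+}1,2,2)$ on $(ab,ac,bc)$, and $R_0=(2,2,2)$; if $m_1\ne m_2$ all five are distinct and $\tau_{sa}=5$, while if $m_1=m_2$ the middle two agree as multisets and are interchanged by the automorphism $a\leftrightarrow b$ of $G$, so $\tau_{sa}=4$. When $m_2=1$ both $b$-side elements have size $1$, so the two $b$-choices alter no cardinality and are identified by relabelling $\beta\leftrightarrow s_1$; the four $K_3$-representations collapse onto the two $a$-profiles $(3,m_1{+}1,2)$ and $(m_1{+}2,2,2)$, which with $R_0$ give $\tau_{sa}=3$. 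Finally, when $m_1=m_2=1$ each $\alpha_i$ and $t^{(i)}_1$ are interchangeable, so all $K_3$-representations become mutually isomorphic and so do all split ones, leaving the two classes distinguished by whether the centre is a triangle; the same plume-collapse applied to $\mathrm{TP}_1$ (with $t\ge 2$) and to the triangles of the underlying $W_t$ in $\mathrm{TP}_{d1}$ and $\mathrm{TP}_{d2}$ again yields $\tau_{sa}=2$, covering the ``otherwise'' case.

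The step I expect to be the main obstacle is \emph{exhaustiveness}: proving that no other edge-clique partition of $G^*$ reaches $\gamma'$ with an antichain $\EGP$-set. This rests on Lemmas~\ref{no T-triangle--s.a.}, \ref{no semiwing--s.a.} and~\ref{no wing--s.a.} (which kill every clique not induced by a saturated star or the central triangle), on Lemma~\ref{d(v_i)=m_i+1} (which rigidifies each plume block to Type~III), and on Lemma~\ref{limit on inland vertex and critical star--s.a.} (which forces exactly one clique of $\mathcal{P}_\mathrm{i}$ per inland vertex). A secondary subtlety worth flagging is the degenerate $\mathrm{TP}_1$ with $t=1$ (the paw), whose line graph is the diamond: there an extra line-graph automorphism swaps the two central edges and merges the triangle and split classes, so this $3$-wing boundary graph is governed by the general $2^{|V_{3w}|}$ count rather than by the peacock cases treated here.
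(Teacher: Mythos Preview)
Your treatment of $\mathrm{TP}_2$ is correct and follows the same skeleton as the paper's proof: both of you invoke the second statement of Lemma~\ref{d(v_i)=m_i+1} (since $d(v_i)=m_i+2>m_i+1$) to force Type~III covers at each stalk, and then branch on whether the central triangle appears in $\mathcal{P}$. Your argument is in fact more explicit than the paper's: the paper simply asserts that the five configurations are non-isomorphic when $m_1\ne m_2$ and that two of them coincide when $m_1=m_2$, whereas you exhibit the set-cardinality profiles $(|S(ab)|,|S(ac)|,|S(bc)|)$ as a concrete invariant and check the $a\leftrightarrow b$ symmetry by hand. That is a genuine improvement in rigor.

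There is, however, a real gap in your ``otherwise'' case. For $\mathrm{TP}_1$ with $t\ge 2$ your one-line plume-collapse is adequate, but for $\mathrm{TP}_{d1}$ and $\mathrm{TP}_{d2}$ it is not. These graphs are built on $W_t$, not on a single $K_3$: there are $t\ge 2$ triangles sharing the base edge, the inland set is $\{b,c_1,\dots,c_t\}$ (or $\{c_1,\dots,c_t\}$), and the relevant constraint is that the vertices $ac_j$ are pairwise \emph{non}-adjacent in $G^*$. The paper's Case~2 argument uses precisely this: all the $S(ac_j)$ with $d(c_j)=2$ must share the same element of $U(\mathcal{S}_i)$, and the dichotomy is whether $S(ab)$ (the edge to the high-degree neighbour) carries that same element or not; in the second alternative one must check that \emph{all} triangles $abc_j$ are forced to induce $K_3$'s in $\mathcal{P}$ simultaneously. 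Your phrase ``the same plume-collapse applied to the triangles of the underlying $W_t$'' does not capture this, and without it you have not shown that there are only two classes rather than more.

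Finally, your remark about the paw conflates the $\tau_{sd}$ formula $2^{|V_{3w}|}$ with the $\tau_{sa}$ setting. You are right that the paw is delicate---its line graph is the diamond, and the bijection $\alpha_1\leftrightarrow\tau$, $t_1\leftrightarrow\mu$, $\beta\leftrightarrow\alpha_1$, $\gamma\leftrightarrow t_1$ actually identifies your split and triangle classes---but this is an observation about a possible boundary issue in the \emph{statement} of the theorem, not about its proof strategy, and the $2^{|V_{3w}|}$ count is irrelevant here.
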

\begin{proof}
Let $G$ be a TP. We count $\tau_{sa}(G^*)$ by analyzing all possible
set representations $\mathcal{S}$ of $G^*$ with
$|U(\mathcal{S})|\leq \gamma'$. We consider two cases.

\medskip

\noindent {\bf Case 1.} $G$ is a TP$_1$ or a TP$_2$.

\medskip

\noindent If $G$ is a TP$_1$ or a TP$_2$, then $G$ has a stalk
vertex $v_1$ or two stalk vertices $v_1$ and $v_2$. By
Lemma~\ref{d(v_i)=m_i+1}, for each stalk vertex $v_i$ with $i \in
[k]$, the set $\EGP(\mathcal{S}_i)$ consists of a $K_{m_i}$ and
$m_i$ distinct trivial cliques, and $S(v_iv_i^{m_i+1})$ and
$S(v_iv_i^{m_i+2})$ contain either the $K_{m_i}$ or the $m_i$
trivial cliques. Note that, in this case, $k\in \{1,2\}$.

\medskip

\noindent When $m_i\geq 2$, the sets $S(v_iv_i^{m_i+1})$ and
$S(v_iv_i^{m_i+2})$ cannot contain the $m_i$ trivial cliques at the
same time. If $k=1$, then $\tau_\mathrm{sa}(G^*)=2$, that is, one
for $S(v_1v_1^{m_1+1})=S(v_1v_1^{m_1+2})$ and the other for
$S(v_1v_1^{m_1+1})\neq S(v_1v_1^{m_1+2})$.

\medskip

\noindent
For the case where $k=2$,
since the triangle in $G$ induces a $K_3$ in $\mathcal{P}$, this
results in $S(v_1v_1^{m_1+1})\ne S(v_1v_1^{m_1+2})$ which further
implies $S(v_2v_2^{m_2+1})\ne S(v_2v_2^{m_2+2})$ and vice versa.
Thus if $k=2$ and $m_1,m_2\geq 2$, then
$\tau_\mathrm{sa}(G^*)=5$, unless $m_1=m_2$. If $k=2$ and
$m_1=m_2\geq 2$, then there are two isomorphic
$\mathcal{S}$'s. Here, for $i \in \{1,2\}$,
\[S(v_iv_i^{m_i+1})\neq S(v_iv_i^{m_i+2})\]
and $S(v_1v_2)\cap U(\mathcal{S}_i)$ contains the
$K_{m_i}$ for one of $i \in \{1,2\}$
and the $m_i$ trivial cliques for the
other $i\in \{1,2\}$.
In this case, $\tau_\mathrm{sa}(G^*)=4$.

\medskip

\noindent If $m_2=1$ and $m_1\geq 2$ (or $m_1=1$ and $m_2\geqslant
2$), then there are two possibilities for $S(v_1v_1^{m_1+j})\cap
U(\mathcal{S}_1)$ for $j\in\{1,2\}$. We obtain
$\tau_\mathrm{sa}(G^*)=3$. If $m_1=m_2=1$, then it is easy to see
that $\tau_\mathrm{sa}(G^*)=2$.

\medskip

\noindent {\bf Case 2.} $G$ is a TP$_\mathrm{d1}$ or a
TP$_\mathrm{d2}$.

\medskip

\noindent In this case, for each $v_i\in V_\mathrm{c}$, all the
vertices $v_iv_i^j$, with $m_i+1\leq j\leq d(v_i)$ and with
$d(v_i^j)=2$, are nonadjacent. Therefore, all $S(v_iv_i^j)\cap
U(\mathcal{S}_i)$ for $m_i+1\leq j\leq d(v_i)$ with $d(v_i^j)=2$
contain the same element. If the set $S(v_iv_i^j)$, with
$d(v_i^j)\geq 3$, contains the same element as above, this yields
$\tau_{sa}(G^*)=2$. If the set $S(v_iv_i^j)$ with $d(v_i^j)\geq 3$
does not contain the same element, then all triangles in $G$ induce
$K_3$'s in $\mathcal{P}$ and, therefore, $S(v_i^jx)$, for all $x$
adjacent to $v_i^j$ with $d(x)=2$, contains a common element which
is not in $S(v_iv_i^j)$. This completes the proof. \qed\end{proof}

\bigskip

\begin{lemma}
\label{LemmaforLessThanGamma'} If $G \notin \{K_3, K_4, W_t, \text{
a star}, \mathrm{TP}\}$ and $|U(\mathcal{S})|\leqslant \gamma'$,
then, for each $v_i\in V_\mathrm{c}$ with $d(v_i)>m_i+1$,
\[|\bigcap_{m_i+1 \leq j \leq d(v_i)} \;
S(v_iv_i^j)\cap U(\mathcal{S}_i)|=1,\] and the common element is
the element corresponding to the clique $K_{m_i}$ in
$\EGP(\mathcal{S}_i)$ when $m_i\ge 2$.
\end{lemma}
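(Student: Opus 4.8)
The plan is to show that, among the edges $v_iv_i^{m_i+1},\dots,v_iv_i^{d(v_i)}$, the restricted sets $S(v_iv_i^j)\cap U(\mathcal S_i)$ behave homogeneously, and that for $m_i\ge 2$ they all equal the singleton containing the $K_{m_i}$-element; the asserted intersection then has size one. My starting point is the second statement of Lemma~\ref{d(v_i)=m_i+1}: since $d(v_i)>m_i+1$, the cover $\EGP(\mathcal S_i)$ is a $K_{m_i}$ together with $m_i$ trivial cliques, and for each $j$ with $m_i+1\le j\le d(v_i)$ the set $S(v_iv_i^j)\cap U(\mathcal S_i)$ equals either the singleton $\{e\}$ (where $e$ corresponds to $K_{m_i}$), call this \emph{type $\alpha$}, or the full set $\{t_1,\dots,t_{m_i}\}$ of trivial-clique elements, call this \emph{type $\beta$}. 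The intersection is $\{e\}$ if all remaining edges are type $\alpha$, is $\{t_1,\dots,t_{m_i}\}$ (size $m_i$) if all are type $\beta$, and is empty if both types occur; so everything reduces to excluding the mixed case and, when $m_i\ge 2$, the all-$\beta$ case.

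I would first record two facts used throughout. Every clique of $\mathcal P$ meeting a pendant edge $v_iv_i^\ell$ is induced by a $v_i$-star, because the degree-one endpoint $v_i^\ell$ lies in no triangle and centres no nontrivial star; combined with the exactness $|\EGP(\mathcal S_i)|=m_i+1$ from Lemma~\ref{limit on inland vertex and critical star--s.a.}, this shows that $U(\mathcal S_i)$ consists of precisely all $v_i$-star cliques of $\mathcal P$. Second, $\mathcal P_{r}=\es$ by Lemma~\ref{no T-triangle--s.a.}, so every $K_3$ of $\mathcal P$ is induced by a wing or a semiwing.

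The core step rules out the mixed case. Suppose some $v_iv_i^{j_0}$ is type $\beta$ while some $v_iv_i^{j_1}$ ($j_0\ne j_1$, both exceeding $m_i$) is type $\alpha$. Then $S(v_iv_i^{j_0})$ and $S(v_iv_i^{j_1})$ are disjoint inside $U(\mathcal S_i)$, yet the two edges are adjacent in $G^*$; by simplicity their unique common element is a clique outside $U(\mathcal S_i)$ containing both edges. By the first recorded fact it is not a $v_i$-star clique, so by Whitney's theorem (Theorem~\ref{StarOrTriangle}) it is a $K_3$, forcing the triangle $v_iv_i^{j_0}v_i^{j_1}$ in $G$. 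When $m_i\ge 2$, at most one remaining edge can be type $\beta$ (two such would share the $\ge 2$ elements $t_1,t_2$, violating simplicity), so $v_i^{j_0}$ is forced to be adjacent to \emph{every} other remaining neighbour $v_i^{j'}$ through such a triangle. I would then chase this structure: using $\mathcal P_{r}=\es$ to force $d(v_i^{j'})=2$ (otherwise all three triangle vertices have degree $>2$ and the $K_3$ lies in $\mathcal P_{r}$), and using the inland bound in Lemma~\ref{limit on inland vertex and critical star--s.a.}(3) against the forced covering of the edges at $v_i^{j_0}$ to conclude that $v_i^{j_0}$ has no neighbours besides $v_i$, the $v_i^{j'}$, and possibly its own plumes. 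Since $G$ is connected, this identifies $G$ as a $W_t$ with base edge $v_iv_i^{j_0}$ and apices $v_i^{j'}$ decorated by plumes, i.e.\ a $\mathrm{TP}_1$, $\mathrm{TP}_{d1}$, or $\mathrm{TP}_{d2}$, contradicting $G\notin\mathrm{TP}$. Homogeneity follows; and for $m_i\ge 2$ the all-$\beta$ alternative is excluded by the same simplicity cap (here $d(v_i)>m_i+1$ guarantees two remaining edges), so every remaining edge is type $\alpha$ and the common element is $e$.

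I expect the triangle-chasing in the last step to be the main obstacle. It splits into delicate sub-cases according to whether the common far endpoint $v_i^{j_0}$ is inland or critical (the critical case yielding the $\mathrm{TP}_{d2}$ pattern and requiring the structural constraints to be re-applied at $v_i^{j_0}$) and according to whether a single triangle is forced (the degenerate $W_1=K_3$, i.e.\ the $\mathrm{TP}_1$ boundary) or several (a genuine diamond back). The two levers that keep these cases tractable are the simplicity cap on the number of type-$\beta$ edges and the identification of $U(\mathcal S_i)$ with exactly the set of $v_i$-star cliques, which is what guarantees that any ``extra'' intersection must come from a genuine triangle rather than from an unseen star clique.
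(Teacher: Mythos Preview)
Your overall strategy matches the paper's: reduce via Lemma~\ref{d(v_i)=m_i+1}(2) to the $\alpha/\beta$ dichotomy, show that a mixed pair forces a triangle clique in $\mathcal{P}$ (because $U(\mathcal{S}_i)$ coincides with the set of $v_i$-star cliques, so the common element must be a $K_3$), and then chase the resulting triangle structure until $G$ is recognised as a tailed peacock. The two ``recorded facts'' and the uniqueness-of-$\beta$ observation for $m_i\ge 2$ are exactly the levers the paper uses.

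There are, however, two genuine gaps in your case analysis. First, the step ``$\mathcal{P}_r=\es$ forces $d(v_i^{j'})=2$'' needs $d(v_i^{j_0})\ge 3$, which you only get when $d(v_i)>m_i+2$ (so that $v_i^{j_0}$ has at least two triangle-neighbours). When $d(v_i)=m_i+2$ there is a single triangle $v_iv_i^{j_0}v_i^{j_1}$, and it may be $v_i^{j_0}$ (the type-$\beta$ vertex) that has degree~$2$; then your chase at $v_i^{j_0}$ is vacuous and the structural analysis must instead be carried out at $v_i^{j_1}$. The paper splits exactly here (its Cases~2 and~3), and the contradiction produced is $G=\mathrm{TP}_2$, which is missing from your list $\{\mathrm{TP}_1,\mathrm{TP}_{d1},\mathrm{TP}_{d2}\}$. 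Second, your argument after the first triangle is explicitly restricted to $m_i\ge 2$ (via ``at most one type~$\beta$''), and you never return to $m_i=1$. For $m_i=1$ both types are singletons, so several $\beta$-edges can coexist and you cannot conclude that the $\beta$-vertex is adjacent to \emph{all} remaining neighbours; the paper treats this case separately, again arriving at a $\mathrm{TP}$ via a chase at the degree-$\ge 3$ endpoint of the forced triangle. Filling in these two sub-cases (essentially the paper's Cases~2--3 and its final paragraph) is what is needed to complete your sketch.
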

\begin{proof}
First consider the case where $m_i\geq 2$. Assume without loss of
generality that $v_iv_i^{m_i+1}$ is a vertex in $G^*$ such that
$S(v_iv_i^{m_i+1})$ contains the $m_i$ trivial cliques in
$\EGP(\mathcal{S}_i)$, where $v_i\in V_\mathrm{c}$ with
$d(v_i)>m_i+1$ (see the second statement in
Lemma~\ref{d(v_i)=m_i+1}).

\medskip

\noindent Thus, for $m_i+2\leq j\leq d(v_i)$, every $S(v_iv_i^j)$
contains the clique $K_{m_i}$ in $\EGP(\mathcal{S}_i)$, for
otherwise $|S(v_iv_i^j)\cap S(v_iv_i^{m_i+1})|>1$. By the second
statement in Lemma~\ref{limit on inland vertex and critical
star--s.a.}, for $m_i+2\leq j\leq d(v_i)$, the element in
$S(v_iv_i^j)\cap S(v_iv_i^{m_i+1})$ corresponds to a $K_3$ in
$\mathcal{P}$ induced by the triangle $v_iv_i^jv_i^{m_i+1}$ and
therefore $v_i^j$ is adjacent to $v_i^{m_i+1}$. Thus, by
Lemma~\ref{no T-triangle--s.a.}, all $v_i^j$, for $m_i+2\leq j\leq
d(v_i)$, are of degree two if $d(v_i)>m_i+2$. For the case where
$d(v_i)=m_i+2$, either $v_i^{m_i+1}$ or $v_i^{m_i+2}$ is of degree
2. Thus we consider the following three cases.

\medskip

\noindent {\bf Case 1.} $d(v_i)>m_i+2$.

\medskip

\noindent In this case, $v_i^{m_i+1}$ must have some neighbors other
than the aforementioned neighbors for otherwise $G$ is a
TP$_\mathrm{d1}$, which is forbidden. We prove that all neighbors of
$v_i^{m_i+1}$ are plumes,\footnote{A plume was defined in
Definition~\ref{df peacocks}.} except $v_i$ and $v_i^x$, for
$m_i+2\leq x\leq d(v_i)$. Suppose that $v_i^{m_i+1}$ has a neighbor
$y$ with
\begin{enumerate}[\rm i.]
\item $d(y)\geq 2$,
\item $y\neq v_i$, and
\item $y$ is not
adjacent to $v_i$.
\end{enumerate}
Thus the edges $(v_i^{m_i+1}y, v_i^{m_i+1}v_i)$ and $(v_i^{m_i+1}y,
v_i^{m_i+1}v_i^{m_i+2})$ in $G^*$ are covered by cliques in
$\mathcal{P}$ that are induced by distinct $v_i^{m_i+1}$-stars,
since $y$ is not adjacent to $v_i^{m_i+2}$ and
$v_iv_i^{m_i+1}v_i^{m_i+2}$ induces a triangle in $\mathcal{P}$.
Therefore, $v_i^{m_i+1}\in V_\mathrm{c}$ by the third statement in
Lemma~\ref{limit on inland vertex and critical star--s.a.}.

\medskip

\noindent For brevity, let $v_z$ denote the critical vertex
$v_i^{m_i+1}$ for some $z\in [k]$. By Lemma~\ref{d(v_i)=m_i+1}, the
set $\EGP(\mathcal{S}_z)$ consists of a $K_{m_z}$ and $m_z$ trivial
cliques. Due to the $K_3$ in $\mathcal{P}$ induced by $v_iv_zv_i^x$,
we can obtain that $S(v_zv_i^x)\cap U(\mathcal{S}_z) \neq
S(v_zv_i)\cap U(\mathcal{S}_z)$. That is, either $S(v_zy)\cap
U(\mathcal{S}_z)\neq S(v_zv_i^x)\cap U(\mathcal{S}_z)$ or
$S(v_zy)\cap U(\mathcal{S}_z)\neq S(v_zv_i)\cap U(\mathcal{S}_z)$.
By the second statement in Lemma~\ref{limit on inland vertex and
critical star--s.a.}, either $y$ is adjacent to $v_i^x$ or $y$ is
adjacent to $v_i$. The first option contradicts $d(v_i^x)=2$ and the
second contradicts that $y$ is not adjacent to $v_i$. The conclusion
is that all neighbors of $v_i^{m_i+1}$ are plumes, except $v_i$ and
$v_i^x$, for $m_i+2\leq x\leq d(v_i)$. Thus $G$ is a
TP$_\mathrm{d2}$, a contradiction.

\medskip

\noindent
 {\bf Case 2.} $d(v_i)=m_i+2$ and $d(v_i^{m_i+2})=2$.

\medskip

\noindent A similar argument as in the previous case leads to the
conclusion that $G$ is a TP$_2$; again a contradiction.

\medskip

\noindent
 {\bf Case 3.} $d(v_i)=m_i+2$ and $d(v_i^{m_i+1})=2$.

\medskip

\noindent We prove that $v_i^{m_i+2}$ has no neighbor with degree at
least 2, except $v_i$ and $v_i^{m_i+1}$. Suppose that there exists a
vertex $y$ with
\begin{enumerate}[\rm i.]
\item $d(y)\geq 2$,
\item $y$
is not adjacent to $v_i^{m_i+2}$, and
\item $y\notin \{v_i, v_i^{m_i+1}\}$.
\end{enumerate}
A similar argument as in the first case leads to the conclusion that
either $y$ is adjacent to $v_i^{m_i+1}$ or $y$ is adjacent to $v_i$.
Both options contradict the assumption that titles this case. This
leads again to a TP$_2$, which is still a contradiction.

\medskip

\noindent Finally, consider the case where $m_i=1$. Suppose that
$S(v_iv_i^{m_i+1})$ and $S(v_iv_i^{m_i+2})$ contain the two elements
in $U(\mathcal{S}_i)$, where $v_i\in V_\mathrm{c}$ with
$d(v_i)>m_i+1$. By a similar argument as above, we obtain that
\begin{enumerate}[\rm i.]
\item $v_i^{m_i+1}$ is adjacent to $v_i^{m_i+2}$,
\item the triangle
$v_iv_i^{m_i+1}v_i^{m_i+2}$ induces a $K_3$ in $\mathcal{P}$, and
\item
either $v_i^{m_i+1}$ or $v_i^{m_i+2}$ has degree two.
\end{enumerate}
Without loss of generality, assume that $d(v_i^{m_i+1})=2$. By using
a similar argument as above, we obtain that every neighbor $y \neq
v_i$ of $v_i^{m_i+2}$, with $d(y)\ge 2$ is adjacent to $v_i$. If
there exists such a vertex $y$, which is not $v_i^{m_i+1}$, then
either $S(v_iy)\cap U(\mathcal{S}_i)\ne S(v_iv_i^{m_i+1})\cap
U(\mathcal{S}_i)$ or $S(v_iy)\cap U(\mathcal{S}_i)\ne
S(v_iv_i^{m_i+2})\cap U(\mathcal{S}_i)$ since
$v_iv_i^{m_i+1}v_i^{m_i+2}$ induces a $K_3$ in $\mathcal{P}$.

\medskip

\noindent In the first case, $y$ is adjacent to $v_i^{m_i+1}$, which
contradicts $d(v_i^{m_i+1})=2$. In the second case,
$v_iyv_i^{m_i+2}$ induces a triangle in $\mathcal{P}$. Thus
$d(y)=2$. Moreover, by a similar argument we obtain that all
neighbors $x$ of $v_i$,  with $d(x)\ge 2$ and $x\neq v_i^{m_i+2}$,
are adjacent to $v_i^{m_i+2}$. This yields $d(x)=2$. Thus $G$ is a
TP, a contradiction. This completes the proof. \qed\end{proof}

\bigskip

\begin{theorem}
\label{LessThanGamma'} If $G \notin \{K_3, K_4, W_t, \text{ a star},
\mathrm{TP}\}$, then
$\theta_{sa}(G^*)=|V_\mathrm{i}|+\sum_{i=1}^{k}(m_i+1)$.
\end{theorem}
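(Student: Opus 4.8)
The plan is to prove the two inequalities $\theta_{sa}(G^*)\ge \gamma'$ and $\theta_{sa}(G^*)\le \gamma'$ separately and combine them, where $\gamma'=|V_\mathrm{i}|+\sum_{i=1}^{k}(m_i+1)$ is the quantity already fixed in this section. For the lower bound I would argue by contradiction: suppose some minimum $\mathcal{S}\in F_{sa}(G^*)$ has $|U(\mathcal{S})|=\theta_{sa}(G^*)<\gamma'$. Then $|U(\mathcal{S})|\le \gamma'$, so the first two statements of Lemma~\ref{limit on inland vertex and critical star--s.a.} apply and give $|\mathcal{P}_\mathrm{i}|=|V_\mathrm{i}|$ together with exactly $m_i+1$ cliques induced by each $v_i$-star. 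Because a nontrivial star-clique determines its centre uniquely by Whitney's theorem, and each monopolist $\{v_iv_i^j\}$ is attributable only to $v_i$ (its other endpoint $v_i^j$ is a plume), the set $\mathcal{P}_\mathrm{c}$ is partitioned across the critical vertices, whence $|\mathcal{P}_\mathrm{c}|=\sum_{i=1}^{k}(m_i+1)$. Adding the two contributions yields $|U(\mathcal{S})|=|\mathcal{P}|=|\mathcal{P}_\mathrm{i}|+|\mathcal{P}_\mathrm{c}|=\gamma'$, contradicting $|U(\mathcal{S})|<\gamma'$; hence $\theta_{sa}(G^*)\ge \gamma'$.

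For the upper bound I would exhibit one explicit member of $F_{sa}(G^*)$ of the required size. Let $\mathcal{P}$ consist of the clique induced by the saturated $w$-star for every $w\in V_2(G)$, together with one trivial clique $\{v_iv_i^j\}$ for each edge $v_iv_i^j$ of $G$ joining a critical vertex $v_i$ to one of its plumes. The star-cliques alone already form an edge-clique partition of $G^*$: every edge of $G^*$ comes from two edges of $G$ sharing a unique common vertex $w$, which necessarily has degree at least $2$, so the edge is covered exactly once, by the $w$-star; the added trivial cliques cover no edges. Running this through the $\EGP$-bijection, every vertex of $G^*$ receives a set of cardinality exactly two: an edge $uv$ with $d(u),d(v)\ge 2$ is assigned the two cliques induced by the saturated $u$-star and the saturated $v$-star, while a plume-edge $v_iv_i^j$ is assigned the clique induced by the saturated $v_i$-star together with the monopolist $\{v_iv_i^j\}$. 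Distinct edges receive distinct sets, and since all sets have size two none can be contained in another, so $\mathcal{S}$ is an antichain; any two sets share at most one clique, so $\mathcal{S}$ is simple. Counting gives $|U(\mathcal{S})|=|V_2(G)|+\sum_{i=1}^{k}m_i=(|V_\mathrm{i}|+k)+\sum_{i=1}^{k}m_i=\gamma'$, so $\theta_{sa}(G^*)\le \gamma'$, and the two bounds together give $\theta_{sa}(G^*)=\gamma'$.

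I expect the lower bound to carry all the genuine difficulty, and almost all of that difficulty has been discharged into the preceding lemmas, so the theorem itself is largely assembly plus the construction above. The real content is the gap between $m_i$ and $m_i+1$: a critical vertex $v_i$ always has a neighbour $z$ of degree at least $2$ (as $G$ is not a star), and $S(v_iz)$ must meet every $S(v_iv_i^j)$, which an $m_i$-element simple antichain representation of the plume-clique $K_{m_i}$ cannot accommodate without forcing a containment or a double intersection; Lemmas~\ref{d(v_i)=m_i+1} and~\ref{LemmaforLessThanGamma'} make this precise by pinning the local configuration at each $v_i$ down to Types~I, II or~III, all of which cost $m_i+1$. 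This is exactly why the star must be excluded, where no such $z$ exists and $\theta_{sa}$ drops to the value $n$ of Theorem~\ref{SAomega K_n}, and why the small and peacock graphs $K_3,K_4,W_t,\mathrm{TP}$, whose wings, base edges or triangles violate the hypotheses of Lemmas~\ref{no T-triangle--s.a.}, \ref{no semiwing--s.a.} and~\ref{no wing--s.a.}, are handled separately (the peacock cases in Theorem~\ref{specialgraphs m.s.a.-rep.}).
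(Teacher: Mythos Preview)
Your approach is correct and in fact more direct than the paper's for the lower bound. The paper proves Theorem~\ref{LessThanGamma'} by first invoking Lemmas~\ref{no T-triangle--s.a.}--\ref{no wing--s.a.}, Lemma~\ref{d(v_i)=m_i+1} and Lemma~\ref{LemmaforLessThanGamma'} to pin down the full structure of any $\mathcal{P}$ with $|\mathcal{P}|\le\gamma'$ (it must consist of saturated stars together with one of the three admissible configurations at each critical vertex), and only then reads off both $\theta_{sa}(G^*)=\gamma'$ and the material needed for the type count in Theorem~\ref{line graph m.s.a.-rep.}. You bypass the structural characterisation and go straight from Lemma~\ref{limit on inland vertex and critical star--s.a.} to the count, which is all that is needed for \emph{this} theorem; the full structure is only required later. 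Your explicit upper-bound construction is also cleaner than the paper's somewhat elliptical ``there exists an $\mathcal{S}$'' argument.

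One point in your lower bound should be tightened. To conclude $|\mathcal{P}_\mathrm{c}|=\sum_{i=1}^{k}(m_i+1)$ you need the families of $v_i$-star cliques to be pairwise disjoint across $i$. You handle nontrivial cliques (unique centre) and plume-edge singletons (unique critical endpoint), but you do not explicitly rule out a trivial clique $\{v_iv_j\}$ with both endpoints critical, which would be double-counted. This is in fact impossible: Lemma~\ref{limit on inland vertex and critical star--s.a.}(2) asserts both $|\EGP(\mathcal{S}_i)|=m_i+1$ \emph{and} that $\mathcal{P}$ has exactly $m_i+1$ cliques induced by $v_i$-stars. Each element of $U(\mathcal{S}_i)$ lies in some $S(v_iv_i^\ell)$, so the corresponding clique in $\mathcal{P}$ contains a plume-edge of $v_i$; since the two counts agree, \emph{every} $v_i$-star clique in $\mathcal{P}$ contains a plume-edge of $v_i$. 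As plume-edges of distinct critical vertices are never adjacent in $G^*$, no clique can serve two critical vertices, and the families are disjoint. With this one observation added, your argument is complete.
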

\begin{proof}
First, we show that there exists an $\mathcal{S}\in F_{sa}(G^*)$
with $|U(\mathcal{S})|\leq \gamma'$. By
Lemma~\ref{LemmaforLessThanGamma'}, the set $\mathcal{P}$ contains
the clique induced by the saturated $v_i$-star for each $i \in [k]$
with $d(v_i)>m_i+1$. Therefore, $\mathcal{P}$ contains no clique
induced by a $K_3$ in $G$ that contains some $v_i\in V_\mathrm{c}$.

\medskip

\noindent
By Lemmas~\ref{no T-triangle--s.a.}-\ref{no
wing--s.a.}, $\mathcal{P}$ contains no clique induced by a $K_3$ in
$G$ and therefore, by the third statement
in Lemma~\ref{limit on inland
vertex and critical star--s.a.}, $\mathcal{P}$ contains the clique
induced by the saturated $v$-star for every $v\in V_\mathrm{i}$.
By
Lemma~\ref{d(v_i)=m_i+1}, there exists an $\mathcal{S}$ with
$|U(\mathcal{S})|\leq \gamma'$.

\medskip

\noindent On the other hand, by using a similar argument as in
Lemma~\ref{LessThanGamma}, an $\mathcal{S}$ with its corresponding
$\mathcal{P}$ as described above is in $F_{sa}(G^*)$. Hence
\[\theta_{sa}(G^*)=\gamma'=|V_\mathrm{i}|+\sum_{i=1}^{k}(m_i+1).\]
This completes the proof.
\qed\end{proof}

\bigskip

\begin{theorem}
\label{line graph m.s.a.-rep.} For a graph $G$ which is not a
$\mathrm{TP}$,
\begin{equation*}
\tau_{sa}(G^*)=
\begin{cases}
1 & \text{if $G$ is $K_3$,}\\
2 & \text{if $G$ is $K_4$ or $W_t$,}\\
1+N_{\PP}(d(v),r) & \text{if $G$ is a $v$-star with $d(v)\geq 3$,}\\
2^x y^z & \mbox{otherwise,}
\end{cases}
\end{equation*}
where $x =|\{v_i: m_i=2 \mbox{ and } d(v_i)=3\}|$,
$y=3+N_{\PP}(m_i+1,r)$ and $z = |\{v_i: m_i\geq 3 \mbox{ and }
d(v_i)=m_i+1\}|$.
\end{theorem}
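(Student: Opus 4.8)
The plan is to compute $\tau_{sa}(G^*)$ by counting the isomorphism classes of set representations $\mathcal{S}\in F_{sa}(G^*)$ achieving the minimum $\theta_{sa}(G^*)$. The four special cases ($K_3$, $K_4$, $W_t$, and $v$-stars) I would dispatch first. For $K_3$ one checks directly that $(K_3)^*=K_3$ and $\tau_{sa}(K_3)=N_\mathrm{PP}(3,r)+1=1$ since no projective plane of order $r$ gives $n=3$. For $K_4$ and $W_t$, a short direct analysis of the few possible edge-clique partitions of the linegraph yields exactly two isomorphism classes each. For a $v$-star with $d(v)\geq 3$, the linegraph is a clique $K_{d(v)}$, so by Theorem~\ref{SAomega K_n} we get $\tau_{sa}((S_v)^*)=\tau_{sa}(K_{d(v)})=1+N_\mathrm{PP}(d(v),r)$.

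\textbf{The main work is the `otherwise' case.} Here I would invoke the structural results already established for $|U(\mathcal{S})|\leq\gamma'$. By Theorem~\ref{LessThanGamma'}, $\theta_{sa}(G^*)=\gamma'$, so every minimum representation satisfies the hypotheses of Lemmas~\ref{no T-triangle--s.a.}--\ref{no wing--s.a.} and~\ref{d(v_i)=m_i+1}. These lemmas force $\mathcal{P}_r=\es$ and forbid $K_3$'s induced by wings or semiwings, so the only freedom in choosing $\mathcal{S}$ lies in (i) the structure of $\EGP(\mathcal{S}_i)$ at each critical vertex $v_i$, and (ii) how the sets $S(v_iv_i^j)$ for $m_i+1\leq j\leq d(v_i)$ attach to $U(\mathcal{S}_i)$. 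The inland vertices contribute saturated stars uniquely and so add no isomorphism classes. Thus $\tau_{sa}(G^*)$ factors as a product over the critical vertices $v_i$, each factor counting the local choices at $v_i$ up to isomorphism.

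\textbf{The key counting step} is to determine, for each type of critical vertex, its local contribution. I would split by the relation between $d(v_i)$ and $m_i+1$. When $d(v_i)>m_i+1$, Lemma~\ref{LemmaforLessThanGamma'} shows all the sets $S(v_iv_i^j)$ for $j>m_i$ share the single element corresponding to $K_{m_i}$, so $\EGP(\mathcal{S}_i)$ is forced to be Type~III with a rigid attachment; this yields a \emph{single} local class and contributes a factor of $1$. When $d(v_i)=m_i+1$, Lemma~\ref{d(v_i)=m_i+1}(1) lists the options: a Type~III cover (in two flavors, depending on which element $S(v_iv_i^{m_i+1})$ grabs) or an $\NP$ or a $\PP$ of $K_{d(v_i)}$. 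For $m_i\geq 3$ these give $3+N_\mathrm{PP}(m_i+1,r)=y$ isomorphism classes, contributing the $y^z$ factor where $z$ counts such vertices. For the boundary value $m_i=2$ with $d(v_i)=3$, the $\NP$ and one Type~III flavor coincide (since $K_2$-partitions degenerate) leaving exactly $2$ classes, giving the $2^x$ factor. Collecting these factors over all critical vertices produces $2^x y^z$.

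\textbf{The hard part} will be verifying that the local classes at distinct critical vertices combine \emph{independently} and that no two of the listed local configurations are secretly isomorphic as global representations. Independence could fail if two critical vertices are adjacent or share structure in $G$; but in the `otherwise' case, the absence of wing- and semiwing-induced triangles (Lemmas~\ref{no semiwing--s.a.} and~\ref{no wing--s.a.}) guarantees that the universe elements $U(\mathcal{S}_i)$ at different $v_i$ are disjoint apart from the rigidly-determined saturated-star elements, so the isomorphism type is genuinely a product. I would close the argument by checking each of the $y$ local types is pairwise non-isomorphic (the $\NP$, $\PP$, and two Type~III flavors are distinguished by their element-incidence patterns, and the $N_\mathrm{PP}(m_i+1,r)$ projective-plane classes by Theorem~\ref{SDomega K_n}), and that the degenerate $m_i=2$ case collapses precisely one pair.
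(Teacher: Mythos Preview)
Your proposal is correct and follows essentially the same approach as the paper: dispatch the special cases directly (or via Theorem~\ref{SAomega K_n}), then in the generic case invoke the structural lemmas to rigidify everything except the local configuration at each critical vertex, and multiply the local counts. The only cosmetic difference is in how the factor $3+N_{\PP}(m_i+1,r)$ is obtained when $d(v_i)=m_i+1$: the paper counts one class from the $K_{d(v_i)}$-plus-trivials configuration and \emph{two} classes from the $\NP$ of $K_{d(v_i)}$ (distinguished by whether the apex of the near-pencil is the unique non-plume edge $v_iv_i^{m_i+1}$ or a plume edge), whereas you count two Type~III flavors plus one Type~I; these are the same three configurations under the correspondence established in the proof of Lemma~\ref{d(v_i)=m_i+1}. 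Your collapse at $m_i=2$ (the two $\NP$ apex choices become indistinguishable in $K_3$) matches the paper's ``unless $m_i=2$'' clause. Your explicit verification of independence across critical vertices and pairwise non-isomorphism of the local types is more careful than the paper, which takes these for granted.
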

\begin{proof}
Clearly, $\tau_{sa}(G^*)=3$ when $G=K_3$ and
$\tau_{sa}(G^*)=\tau_{sd}(G^*)=2$ when $G \in \{K_4,W_t\}$. By
Theorem~\ref{SAomega K_n}, $\tau_{sa}(G^*)=1+N_{\PP}(d(v),r)$ when
$G$ is a $v$-star with $d(v)\geq 3$. It remains to prove that
\[\tau_{sa}(G^*)=2^x
y^z\] for the remaining cases, where $x =|\{v_i: m_i=2 \mbox{ and } d(v_i)=3\}|$,
$y=3+N_{\PP}(m_i+1,r)$ and $z = |\{v_i: m_i\geq 3 \mbox{ and }
d(v_i)=m_i+1\}|$. Recall that for each $i \in [k]$
with $d(v_i)=m_i+1$, the cliques in $\mathcal{P}$ induced by
$v_i$-stars are either
\begin{enumerate}[\rm i.]
\item a $K_{d(v_i)}$ with
$m_i$ trivial cliques $\{v_iv_i^1\},\ldots ,\{v_iv_i^{m_i}\}$, or
\item an $\NP$ or a $\PP$ of $K_{d(v_i)}$.
\end{enumerate}
Furthermore, if $d(v_i)=m_i+1$ and the cliques in $\mathcal{P}$
induced by $v_i$-stars constitute an \mbox{\textup{N-P}}, then there
are two non-isomorphic $\mathcal{S}$ which are determined according
to the intersection of the $K_2$'s in the \mbox{\textup{N-P}} is
$v_iv_i^{m_i+1}$ or not unless $m_i = 2$. This proves the last
equality in the theorem. \qed\end{proof}

\begin{theorem} \label{linegraph m.s.d.u.-rep.}
For a graph $G$,
\begin{equation*}
\tau_\mathrm{sdu}(G^*)=
\begin{cases}
2 & \text{if $G \in\{K_4, W_2, \mathrm{TP}_1\}$ with $m_1=1$,}\\
N_{\PP}(d(v),r) &
\text{if $G$ is $H$ with $\theta_{sdu}(K_{d(v)})=d(v)$,}\\
1+N_{\PP}(d(v)+1,r) &
\text{if $G$ is $H$ with $\theta_{sdu}(K_{d(v)})=d(v)+1$,}\\
1 & \mbox{otherwise,}
\end{cases}
\end{equation*}
where $H$ is a $v$-star with $d(v)\geqslant 4$
\end{theorem}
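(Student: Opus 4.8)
The plan is to exploit the fact that a distinct uniform family is automatically an antichain, so every sdu-representation is an sa-representation; this lets me reuse the machinery built for $\tau_\mathrm{sa}(G^*)$ and merely filter it by uniformity. I first dispose of the star case. If $G=H$ is a $v$-star with $d(v)\geq 4$, then $G^*=K_{d(v)}$, and the two middle lines of the formula are exactly the two lines of Theorem~\ref{SDUomega K_n} for $\tau_\mathrm{sdu}(K_{d(v)})$, according to whether $\theta_\mathrm{sdu}(K_{d(v)})$ equals $d(v)$ or $d(v)+1$. Nothing new is needed there.

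For the bulk of the graphs, $G\notin\{K_3,K_4,W_t,\text{a star},\mathrm{TP}\}$, I would first pin down $\theta_\mathrm{sdu}(G^*)$. Since every sdu-representation is an sa-representation, Theorem~\ref{LessThanGamma'} gives $\theta_\mathrm{sdu}(G^*)\geq\theta_\mathrm{sa}(G^*)=\gamma'$; conversely the representation assigning to each edge $uv$ of $G$ the two cliques induced by the saturated $u$-star and the saturated $v$-star, with one extra trivial clique per plume, is simple, distinct, uniform and has universe $\gamma'$. Hence $\theta_\mathrm{sdu}(G^*)=\gamma'=\theta_\mathrm{sa}(G^*)$, so $F_\mathrm{sdu}(G^*)$ is precisely the set of uniform members of $F_\mathrm{sa}(G^*)$, and the candidates can be read off from Theorem~\ref{line graph m.s.a.-rep.}: all freedom in a minimum sa-representation sits at the critical vertices and produces the factor $2^x y^z$. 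The decisive step is to impose uniformity on each local block $\EGP(\mathcal{S}_i)$. The key computation is that in a near-pencil block the apex edge $v_iv_i^{m_i+1}$ receives a set of size $m_i$ plus its contribution from the far endpoint, and in a projective-plane block every set has size $r+1$; because $v_i$ necessarily has a non-plume neighbour (as $G$ is not a star), each of these produces a set of size exceeding $2$ while the inland edges all have size $2$, destroying uniformity. Thus only the $K_{d(v_i)}$-with-trivial-cliques block survives at every $v_i$, the choices collapse to one, and $\tau_\mathrm{sdu}(G^*)=1$.

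The same size-counting explains why the $2^{|V_\mathrm{3w}|}$ of the sd-case vanishes: replacing the saturated stars of a $3$-wing $vxy$ by the triangle $\{vx,vy,xy\}$ leaves $xy$ in a single clique, so a monopolist must be adjoined to restore $|S(xy)|=2$, and the partition then has one clique too many, i.e.\ it is not minimum. I would then clear the remaining entries of the excluded list that still belong to the ``otherwise'' branch: $G=K_3$ and the star $K_{1,3}$ both give $G^*=K_3$ with $\tau_\mathrm{sdu}(K_3)=1$ by Theorem~\ref{SDUomega K_n}; and for $W_t$ with $t\geq 3$ and for every tailed peacock other than the $\mathrm{TP}_1$ with $m_1=1$, I would filter the sa-descriptions of Theorems~\ref{line graph m.s.a.-rep.} and~\ref{specialgraphs m.s.a.-rep.} by uniformity exactly as above, each time finding a single uniform minimum partition.

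Finally, for the three exceptional graphs I would exhibit the two classes directly. For $G=K_4$ the line graph is the octahedron, whose edge set has precisely two decompositions into four triangles (the four saturated-star triangles and the four triangles coming from the triangles of $K_4$); both yield simple, distinct, uniform representations with universe $4$ and are distinct edge-clique partitions, hence non-isomorphic, so $\tau_\mathrm{sdu}=2$. For $G=\mathrm{TP}_1$ with $m_1=1$ (the paw) one has $G^*=K_4-e$, and the only minimum uniform partitions are obtained by choosing one of the two triangles, adjoining the two leftover edges and a single monopolist, giving exactly two partitions; the graph $W_2$ is treated identically on its wheel line graph, where the hub forces a choice between the two opposite pairs of hub-triangles. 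The main obstacle throughout is this uniformity-filtering step: proving that forcing all sets to have equal cardinality eliminates the near-pencil, projective-plane and $3$-wing configurations that an sd- or sa-representation would tolerate, while confirming that in exactly these three exceptional graphs a genuinely second uniform minimum partition survives.
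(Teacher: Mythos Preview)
Your approach is essentially the paper's: both proofs hinge on the single observation that a simple (or distinct) uniform family is automatically an antichain, so $F_{\mathrm{sdu}}(G^*)$ sits inside $F_{\mathrm{sa}}(G^*)$ and one simply filters the already-classified sa-representations by uniformity, invoking Theorem~\ref{SDUomega K_n} for the star case. The paper's own proof is in fact much terser than yours---it records only the antichain reduction and the star case, leaving the equality $\theta_{\mathrm{sdu}}(G^*)=\theta_{\mathrm{sa}}(G^*)$ and the entire case-by-case uniformity filtering (including the exceptional graphs) implicit---so your write-up is a more complete execution of the same strategy rather than a different one.
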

\begin{proof}
If $G$ is neither $K_3$ nor a star, then $G^*$ is not a complete
graph. Therefore, any $\mathcal{S}\in F_\mathrm{s}(G^*)$ has
$|S(e)|\geq 2$ for some $e\in E(G)$. This further implies that any
$\mathcal{S}\in F_{su}(G^*)$ has $|S(e)|\geq 2$ for {\em all\/}
$e\in E(G)$.

\medskip

\noindent Any pair of edges $e$ and $e'$ with $e\neq e'$ have
$S(e)\nsubseteq S(e')$ since $|S(e)\cap S(e')|\leq 1$. Thus those
members, if any, of $F_{sa}(G^*)$ in which all sets have the same
cardinality, constitute $F_{su}(G^*)$ and $F_{sdu}(G^*)$. For the
case where $G$ is a $v$-star, we can obtain $\tau_{sdu}(G^*)$
directly from Theorem~\ref{SDUomega K_n}. This completes the proof.
\qed\end{proof}

\end{document}